\newcommand{\PP}{{\mathbb P}}
\newcommand{\C}{{\mathbb C}}
\newcommand{\Q}{{\mathbb Q}}
\newcommand{\R}{{\mathbb R}}
\newcommand{\Z}{{\mathbb Z}}
\newcommand{\N}{{\mathbb N}}
\newcommand{\cX}{{\mathcal X}}
\newcommand{\frakd}{\mathfrak{d}}
\newcommand{\frakD}{\mathfrak{D}}
\newcommand{\frakm}{\mathfrak{m}}
\newcommand{\cA}{\mathcal{A}}
\DeclareMathOperator{\Hom}{Hom}
\DeclareMathOperator{\Int}{Int}
\DeclareMathOperator{\Spec}{Spec}
\DeclareMathOperator{\rank}{rank}
\newcommand{\gone}{{\gamma_1'}}
\newcommand{\gtwo}{{\gamma_2'}}
\newcommand{\gthree}{{\gamma_3'}}
\newcommand{\gfour}{{\gamma_4'}}
\newcommand{\gi}{{\gamma_i'}}
\newcommand{\gj}{{\gamma_j'}}
\tikzstyle{sing}=[star, draw=black, fill=white, inner sep=0pt, minimum size=5pt]
\newcommand{\YGHK}{{Y}}
\newcommand{\DGHK}{{D}}
\newcommand{\GHK}{\mathrm{GHK}}
\newcommand{\BGHK}{B_\mathrm{GHK}}
\definecolor{darkblue}{rgb}{0,0,0.5} 
\newtheorem{thm}{Theorem}[section]
\newtheorem{cor}[thm]{Corollary}
\newtheorem{lem}[thm]{Lemma}
\newtheorem{claim}[thm]{Claim}
\newtheorem{ex}[thm]{Example}
\newtheorem{definition}[thm]{Definition}
\newtheorem{def/thm}[thm]{Definition/Theorem}
\newtheorem{rmk}[thm]{Remark}
\newtheorem{remark}[thm]{Remark}
	\title{Some Examples of Family Floer Mirrors}
	\author{Man-Wai Cheung, Yu-Shen Lin}
\begin{document}

	\maketitle
\begin{abstract}
    In this article, we give explicit calculations for the family Floer mirrors of some non-compact Calabi-Yau surfaces. We compare them with the mirror construction of Gross-Hacking-Keel-Siebert for suitably chosen log Calabi-Yau pairs and with rank two cluster varieties of finite type. In particular, the analytifications of the latter two give partial compactifications of the family Floer mirrors that we computed. 
\end{abstract}

	\tableofcontents
\section{Introduction}

The Strominger-Yau-Zaslow (SYZ) conjecture predicts that Calabi-Yau manifolds have the structure of special Lagrangian fibrations and that their mirrors can be constructed via dual special Lagrangian fibrations. Moreover, the Ricci-flat metrics of Calabi-Yau manifolds receive instanton corrections from holomorphic discs with boundaries on the special Lagrangian torus fibres. The conjecture not only gives a geometric way to construct the mirror, it also gives intuitive reasoning for mirror symmetry, for instance see \cites{FLTZ, LYZ}. 
The SYZ philosophy has become a helpful tool for studying mirror symmetry and many of its implications have been proven.
However, the difficulty of the analysis involving singular special Lagrangian fibres makes progress toward the original conjecture relatively slow (see \cites{CJL,CJL2, L16} for the recent progress). 

To understand instanton corrections rigorously in the mathematical context,
Fukaya \cite{F3} proposed a way to understand the relation between instanton corrections from holomorphic curves/discs and the mirror complex structure via a Floer theoretic approach. Kontsevich-Soibelman \cite{KS1} and Gross-Siebert \cite{GS1} later systematically formulated how to construct the mirror in various settings via algebraic approaches. These approaches opened up a window to understand mirror symmetry intrinsically.   

In their algebraic-geometric approach, Gross-Siebert first constructed affine manifolds with singularities from toric degenerations of Calabi-Yau manifolds. Then there is a systematic way of constructing so-called scattering diagrams, which captures the information of the instanton corrections, on the affine manifolds. The data of the scattering diagrams encode how to glue the expected local models into the mirror Calabi-Yau manifolds.
On the other hand, family Floer homology as proposed by Fukaya \cite{F4} lays out the foundation for realizing mirror symmetry intrinsically from the symplectic geometry point of view. Given a Lagrangian fibration, Fukaya's trick introduced later in Section \ref{sec:fukayatrick} provides  pseudo-isotopies between the $A_{\infty}$ structures of fibres after compensation by symplectic flux. In particular, the pseudo-isotopies induce canonical isomorphisms of the corresponding Maurer-Cartan spaces. The family Floer mirror is then the gluing of the Maurer-Cartan spaces via these isomorphisms. 
Not only have the family Floer mirrors been constructed \cites{A1, T4, Y, Y2}, but
Abouzaid proved that the family Floer functor induces homological mirror symmetry \cites{A2, A3}. 
It is natural to ask if the mirrors constructed via the Gross-Siebert program and the family Floer homology approach coincide or not.  

The following is an expected dictionary connecting the two approaches:

\begin{table}[H]
  \begin{center}
    
    \label{tab:table1}
    \begin{tabular}{|c|c|} 
    \hline
      \textbf{family Floer SYZ} & \textbf{GHKS mirror construction} \\
      \hline
      Large complex structure limit & Toric degeneration or Looijenga pair\\
      \hline
      \makecell{Base of SYZ fibration \\ with complex affine structure} & \makecell{Dual intersection complex \\ of the toric degeneration or $\BGHK$} \\ 
      \hline
      Loci of SYZ fibres bounding MI=0 holomorphic discs &
      Rays in scattering diagram \\
       \hline
      Homology of the boundary of a holomorphic disc &
      Direction of the ray \\
      \hline
      \makecell{Exp of the generating function \\of open Gromov-Witten invariants \\of Maslov index zero} &
      Wall functions attached to the ray \\
      \hline
      \makecell{Coefficients of the superpotential =\\Open Gromov-Witten invariants of Maslov index 2 discs } &
      \makecell{Coefficients of theta functions = \\ Counting of broken lines  }\\
      \hline
      \makecell{Isomorphisms of Maurer-Cartan spaces \\induced by pseudo isotopies } &
     \makecell{Wall crossing transformations} \\
    \hline
      Lemma \ref{compatibility} in this article &
      Consistency of the scattering diagrams \\  
      \hline
    Family Floer mirror &
      GHK/GHKS mirror \\
      \hline
    \end{tabular}
  \end{center}
  \caption{Dictionary between the symplectic and algebraic approaches of mirror construction.}
\end{table}

However, it is hard to have good control over all possible discs in a Calabi-Yau manifold due to the wall-crossing phenomenon. Thus, it is generally hard to write down the family Floer mirror explicitly. 
 In the examples of that family Floer mirrors can possibly be computed in the literature \cite{A4, CLL, Y3}, there always exist torus symmetries, and one can write down all the possible holomorphic discs explicitly. In particular, the loci of Lagrangian fibres bounding Maslov index zero discs do not intersect and thus excludes the presence of more complicated bubbling phenomena. 
 
 In this paper, we engineer some $2$-dimensional examples where the family Floer mirrors can be computed explicitly and realize most part of the above dictionary step by step.
 We first prove that the complex affine structures of the bases of special Lagrangian fibrations coincide with the affine manifolds with singularities constructed in Gross-Hacking-Keel \cite{GHK} from some log Calabi-Yau surfaces. See the similar results for the case of $\mathbb{P}^2$ \cite{LLL}, general del Pezzo surfaces relative to smooth anti-canonical divisors \cite{LLL2}, rational elliptic surfaces \cite{CJL2} and Fermat hypersurfaces \cite{L16}.
 When a Calabi-Yau surface admits a special Lagrangian fibration, it is well-known that the special Lagrangian torus fibres bounding holomorphic discs are fibres above certain affine lines with respect to the complex affine coordinates on the base. Using Fukaya's trick, the second author identified a version of open Gromov-Witten invariants with tropical discs counting \cites{L8,L14}, which lays out a foundation for the connection between family Floer mirrors and Gross-Siebert/Gross-Hacking-Keel mirror. The examples are engineered such that all the wall functions are polynomials. Therefore, there are no convergence issues in the gluing procedure and the complexity is kept to minimal. 
 On the other hand, one can compare it with this process of Gross-Hacking-Keel: we can construct a corresponding log Calabi-Yau pair $(\YGHK,\DGHK)$ such that the induced affine manifold with singularities coincides with the complex affine structure of the base of the special Lagrangian fibration. Then we identify the loci of special Lagrangian fibres bounding holomorphic discs with the rays of the canonical scattering diagram and the corresponding wall-crossing transformations in Gross-Hacking-Keel \cite{GHK}. The technical part is to prove that the family Floer mirror has a partial compactification by the gluing of rigid analytic tori. Notice that directly computing the family Floer mirror of $Y$ would lead to only a small subset of the mirror from the Gross-Hacking-Keel mirror construction. One usually requires a certain renormalization procedure (see \cite{A4}\cite{L4}) and such machinery has not been developed for family Floer mirror yet. Here it is crucial that we use a complete K\"ahler metric on the non-compact Calabi-Yau so that the two mirror constructions can be possibly comparable.  
 
By comparing with the calculation of Gross-Hacking-Keel, we then know that the family Floer mirror has a partial compactification that is the analytification of the mirror of $(Y,D)$ constructed in Gross-Hacking-Keel. The mirror construction of Gross-Hacking-Keel produces a family, where the base can be viewed as the complexified K\"ahler moduli of $Y$. We further determine the distinguished point that corresponds to the family Floer mirror. 
 Let $Y'_*$ be the extremal rational elliptic surface with exactly two singular fibres over $0,\infty\in \mathbb{P}^1$ and singular fibre over $0$ is of type $*$, where $*=II,III,IV$. Let $X'_*$ be the complement of the other singular fibre.
We will let $X_*$ be a suitable hyperK\"ahler rotation of $X'_*$.
 The following is a summary of Theorem \ref{mirror II}, Theorem \ref{mirror III} and Theorem \ref{mirror IV}
\begin{thm} \label{main thm}
The analytification of an $\mathcal{X}$-cluster variety of type $A_2$ (or $B_2$ or $G_2$) or the Gross-Hacking-Keel mirror of a suitable log Calabi-Yau pair $(Y,D)$ is a partial compactification of the
 family Floer mirror of $X_{II}$ (or $X_{III}$ or $X_{IV}$ respectively).
\end{thm}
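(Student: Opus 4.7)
The statement combines the three parallel results Theorem~\ref{mirror II}, Theorem~\ref{mirror III} and Theorem~\ref{mirror IV}, one for each $*=II,III,IV$, and I would prove them in parallel along a common six-step scheme guided by the dictionary in Section~1. First, realize the special Lagrangian fibration on $X_*$ as a suitable hyperK\"ahler rotation of the elliptic fibration on $X'_*$ and compute the complex affine structure on its base $B_*$. Second, produce a Looijenga pair $(Y,D)$ whose Gross-Hacking-Keel affine manifold $\BGHK$ is isomorphic to $B_*$; the candidate pair is forced by the intersection profile of $D$ matching the singularity data of $B_*$. Third, enumerate the Maslov-index-$0$ holomorphic discs on $X_*$ and identify the loci of fibres bounding them with the rays of the canonical scattering diagram $\Dcan$ of $(Y,D)$. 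Fourth, using Fukaya's trick from Section~\ref{sec:fukayatrick} and the tropical-disc-count identification from \cite{L8}\cite{L14}, match the wall functions on each ray. Fifth, invoke Lemma~\ref{compatibility} together with the consistency of $\Dcan$ to deduce that the local-to-global gluings on the two sides agree on overlaps. Sixth, identify the distinguished point in the complexified K\"ahler moduli of $(Y,D)$ corresponding to the symplectic data of $X_*$.

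The symplectic side is kept tractable because the examples $X_*$ are engineered so that $B_*$ carries only finitely many initial rays and every wall function is a Laurent polynomial: there is no Novikov-parameter convergence issue, and the entire family Floer mirror already has a model over $\C$. Once the polynomial wall functions have been computed from tropical disc counts, they can be compared directly with the wall functions of the canonical scattering diagrams of types $A_2$, $B_2$ and $G_2$, which are known in closed form and consist of finitely many monomial walls. The cluster-variety half of the statement then follows from the identification of the rank-two $\mathcal{X}$-cluster varieties of finite type with the Gross-Hacking-Keel mirrors of the corresponding cubic, quartic and sextic log Calabi-Yau pairs; so once the GHK identification is established, the cluster identification is automatic.

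The main obstacle is the fifth and sixth steps, i.e. producing the partial compactification and pinpointing the distinguished point. A priori the family Floer mirror of the non-compact $X_*$ is only the open gluing of finitely many rigid-analytic tori parameterizing the Maurer-Cartan spaces of the special Lagrangian fibres, and it misses the loci of $\Dcan$ coming from classes that require the renormalization procedures of \cite{A4}\cite{L4}, which have not yet been developed for family Floer homology. To circumvent this I would show by hand, using the explicit polynomial wall functions, that the family Floer gluing data embed into the GHK gluing data as the complement of finitely many toric divisors, so that the analytification of the GHK mirror is a genuine partial compactification of the family Floer mirror. Identifying the distinguished point of the GHK family then amounts to matching the periods of the complexified symplectic form on $X_*$ with the monomial coefficients of the wall functions, which in these polynomial examples can be computed explicitly. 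I expect the bookkeeping around orientations, signs and the precise complex parameters of the rigid-analytic tori to be where most of the actual labor lies.
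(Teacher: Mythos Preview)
Your six-step outline matches the paper's strategy closely, and the overall logic is correct. However, you underspecify the one step where essentially all the technical work lives, and you mischaracterize it slightly.

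You write that a priori the family Floer mirror is ``the open gluing of finitely many rigid-analytic tori.'' It is not: by construction (Section~\ref{sec:construction}) it is a gluing of many small affinoid domains $\mathcal{U}_\alpha=\mathfrak{Trop}^{-1}(U_\alpha)$ over a fine rational cover $\{U_\alpha\}$ of $B_0$. Inside a single chamber $U_i$ these consolidate into $\mathfrak{Trop}_i^{-1}(U_i)$, but that is still only a thin slice of $(\mathbb{G}_m^{an})^2$, not a torus. The heart of the paper (Section~\ref{section: construction}) is proving that the inclusion $\alpha_i:\mathfrak{Trop}_i^{-1}(U_i)\hookrightarrow\check{X}$ extends to $\mathfrak{Trop}_i^{-1}(\mathbb{R}^2\setminus\{0\})\hookrightarrow\check{X}$. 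This uses: (i) that BPS rays are also affine for the \emph{symplectic} structure (Lemma~\ref{straight}); (ii) a shearing computation showing the extension crosses each wall into the next chamber (Lemma~\ref{shearing lem}); (iii) an explicit check that the apparent poles of the wall-crossing maps, the loci $\{1+z^{\gamma_{i+1}}=0\}$, are covered by other charts (Lemma~\ref{filling the hole}); and (iv) that the composition of all wall-crossings around the singular fibre equals the monodromy (Lemma~\ref{cancellation}), so the extension closes up over the last ray. Your phrase ``show by hand \ldots that the family Floer gluing data embed into the GHK gluing data as the complement of finitely many toric divisors'' points in the right direction but does not anticipate this mechanism.

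Two smaller corrections. The renormalization of \cite{A4}\cite{L4} is a red herring here: the paper invokes it only to explain why one does \emph{not} compute the family Floer mirror of the compactification $Y$; for $X_*$ no renormalization is needed. And the cluster-variety identification is not deduced from a prior identification of $\mathcal{X}$-cluster varieties with GHK mirrors of ``cubic, quartic, sextic'' pairs (those are not the relevant pairs; for $A_2$ it is $dP_5$, for $B_2$ it is $dP_6$). Instead the paper redoes the gluing with a different choice of branch cuts (Section~\ref{comparison A_2}), decomposing the monodromy as $M=M_1M_2$ and placing one cut along each of $l_{\gamma_1},l_{\gamma_5}$; this directly turns the family Floer gluing into the cluster mutation gluing.
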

We will sketch the proof of the $A_2$ case, and the other cases are similar. We first compute the complex affine structure of the SYZ fibration in $X_{II}$ by taking advantage of the fact that its hyperK\"ahler rotation $X'_{II}$ is an elliptic fibration. 
Consider then the local Gromov-Witten invariants computed by the second author \cite{L12}, we use the split attractor flow mechanism to prove that there are exactly five families of SYZ fibres bounding holomorphic discs. The loci parametrizing such fibres in the base of the SYZ fibration are affine lines with respect to the complex affine structure and thus naturally give a cone decomposition on the base (see Figure \ref{fig:1branchcut}). It is not too hard to engineer a Looijenga pair $(Y,D)$ such that its corresponding affine manifold with singularity and cone decomposition are the ones derived from the SYZ fibration of $X_{II}$. In this case, $Y$ is the del Pezzo surface of degree five with $D$ an anti-canonical cycle consisting of five rational curves. The canonical scattering diagram of $(Y,D)$ contains only five rays, and each wall function is a polynomial (see Figure \ref{fig:dp5curve}). Thus, the Gross-Hacking-Keel mirror of $(Y,D)$, after deleting finitely many points, is a gluing of finitely many tori via certain birational transformations. On the other hand, the family Floer mirror $\check{X}$ is the gluing of building blocks of the form $\mathfrak{Trop}^{-1}(U)$, where $U$ is a rational domain. The sets $\mathfrak{Trop}^{-1}(U)$ are "small" subsets in $(\mathbb{G}_m^{an})^2$, but we prove that the inclusion $\mathfrak{Trop}^{-1}(U)\hookrightarrow \check{X}$ can be extended to $\bigg((\mathbb{G}_m^{an})^2\setminus \mathfrak{Trop}^{-1}(0)\bigg)\hookrightarrow \check{X}$. 
Thus, the family Floer mirror $\check{X}$ can also be realized as the gluing of finitely many rigid analytic tori up to some rigid analytic closed subset. Finally, we identify the gluing functions, which are polynomials, from the two mirror constructions from the identification of the affine manifolds. Furthermore, by choosing different branch cuts of on the base of Lagrangian fibration and pushing the singularities to infinity (see Figure \ref{fig:dp5 branch cut'}), we compare the family Floer mirror with the $A_2$-cluster variety.

In addition to being the first example of explicit computation of a family Floer mirror without $S^1$-symmetries and providing a comparison of two mirror constructions, the above theorem has other significance. For instance, it is not clear that the Gross-Hacking-Keel mirror would in general satisfy homological mirror symmetry. On the other hand, the family Floer mirror is designed to prove the homological mirror symmetry conjecture. Abouzaid proved that (when there is no singular fibre) the family Floer mirrors implies homological mirror symmetry \cite{A3}. The comparison of the two mirror constructions provides an intermediate step towards the homological mirror symmetry for Gross-Hacking-Keel mirrors. The current work relies on the fact that the relevant scattering diagrams contain only finitely many rays and the wall functions are all polynomials, and it seems to completely rely on that. However, the more crucial part is the identification (or certain weaker version of equivalence) of the scattering diagrams on the SYZ base and the canonical scattering diagrams. As these are carried out in the cases of $\mathbb{P}^2$ relative to a smooth anti-canonical divisor \cite{L14} and the del Pezzo surface of degree three \cite{BCHL}, the authors expect the equivalence of the two mirror constructions with certain modifications to the treatment of rigid analytic geometry.

\subsection{Structure}
The structure of the paper is arranged as follows: In Section \ref{AG}, we review the definition of cluster varieties and the mirror construction in Gross-Hacking-Keel \cite{GHK} and Gross-Hacking-Keel-Siebert \cites{ghks_cubic}. 
In Section \ref{section: setup}, we describe the surfaces for which we are going to compute the family Floer mirror. 
They come from hyperK\"ahler rotation of the complement of a prescribed fibre in certain rational elliptic surfaces.

In Section \ref{section: family Floer mirror}, we review the family Floer mirror construction and the open Gromov-Witten invariants. 
In Section \ref{section: dp5}, we compute the family Floer mirror of a non-compact Calabi-Yau surface $X_{II}$ explicitly in full detail. Then we compare it with the analytification of the $A_2$-cluster variety. We also compare it with the Gross-Hacking-Keel mirror for a del Pezzo surface of degree five. In particular, the family Floer mirror of $X_{II}$ can be compactified to a del Pezzo surface of degree five via the product formula of the theta functions and prove Theorem \ref{main thm}. 
In Section \ref{section:dp6} and Section \ref{section:dp4}, we sketch the calculation for the family Floer mirror of $X_{III}$ and $X_{IV}$, pointing out the differences from the case of $X_{II}$. 

\section*{Acknowledgment} 
The authors would like to thank Mark Gross and Shing-Tung Yau for the constant support and encouragement. The authors would also like to thank Dori Bejleri, Paul Hacking, Hansol Hong, Chi-Yun Hsu, Laura Friedrickson, Tom Sutherland, Hang Yuan for helpful discussion. The authors appreciate many helpful comments from the reviewers of the earlier version of the manuscript and Sam Bardwell-Evans for revision on the language. 
The first author is supported by NSF grant DMS-1854512. The second author is supported by Simons Collaboration Grant \# 635846 and NSF grant DMS-2204109.

\section{Cluster Varieties and GHK Mirrors}\label{AG}

\subsection{Gross-Hacking-Keel Mirror Construction} \label{RES}
Building upon the work of \cite{GS}\cite{GPS}, Gross-Hacking-Keel \cite{GHK} utilized the enumerative invariants coming from $\mathbb{A}^1$-curves in a log Calabi-Yau surface to recover its mirror family. By the use of theta functions and broken lines, they constructed the mirror family over a version of the complexified ample cone. 
 Heuristically, as the SYZ fibres move to infinity in a direction, the Maslov index zero holomorphic discs with suitable boundary homology classes close up to holomorphic curves with exactly one intersection with some boundary divisor, and hence become $\mathbb{A}^1$-curves. The $\mathbb{A}^1$-curves tropicalize to rays and the counting of $\mathbb{A}^1$-curves will determine the wall functions of the canonical scattering diagram. In this section, we review the mirror construction of Gross-Hacking-Keel \cite{GHK} and Gross-Hacking-Keel-Siebert \cite{ghks_cubic}. 

Consider the pair $(Y,D)$, where $Y$ is a smooth projective rational surface, and $D= D_1 + \cdots + D_n$ is an anti-canonical cycle of rational curves. 
Then $X:= Y \setminus D$ is a non-compact Calabi-Yau surface.\footnote{Note that $X$ is denoted instead by $U$ in \cites{GHK,ghks_cubic}.}
The tropicalization of $(Y,D)$ is a pair $( B_{\mathrm{GHK}}, \Sigma)$, where $B_{\mathrm{GHK}}$ is  homeomorphic to $\R^2$ and has the structure of an integral affine manifold with singularity at the origin, and $\Sigma$ is a decomposition of $B_{\mathrm{GHK}}$ into cones.
The construction of $(B_{\mathrm{GHK}}, \Sigma)$ starts by associating each node $p_{i, i+1}:= D_i \cap D_{i+1}$ with a rank two lattice $M_{i, i+1}$ with basis $v_i$, $v_{i+1}$ and with the cone $\sigma_{i, i+1} \subset M_{i, i+1} \otimes_{\Z} \R$ generated by $v_i$ and $v_{i+1}$. 
Then 
$\sigma_{i, i+1}$ are glued to $\sigma_{i-1, i}$ along the rays $\rho_i:= \R_{\geq} v_i$ to obtain a piecewise-linear manifold $B_{\mathrm{GHK}}$ and a decomposition
\begin{align*}
 \Sigma =  \{ \sigma_{i, i+1} | i = 1, \dots, n \} \cup \{ \rho_i | i = 1, \dots, n \} \cup \{ 0 \}\subseteq \mathbb{R}^2.  
\end{align*}
Define \[
U_i = \mathrm{Int} ( \sigma_{i-1, i} \cup \sigma_{i, i+1}). 
\]
The integral affine structure on $B_{\mathrm{GHK},0} = B_{\mathrm{GHK}} \setminus \{ 0\}$ is defined by the charts
\[
\psi_i : U_i \rightarrow M_{\R}, 
\]
\[
\psi_i(v_{i-1}) = (1,0), \ \psi_i(v_i) = (0,1), \ \psi_i(v_{i+1}) = (-1, -D^2_i), 
\]
with $\psi_i$ linear on $\sigma_{i-1, i} $ and $\sigma_{i, i+1}$. 
It may be worth noting here that at the end of the gluing process, $\rho_{n+1}$ may not agree with $\rho_{1}$. 
This induces an affine structure on $B_{\mathrm{GHK},0} $ which might not extend over the origin when we identify $\rho_{n+1}$ with $\rho_{1}$.
We are going to demonstrate the affine structures explicitly in examples later in this article. 
 Heuristically, one collides all the singular fibres of the SYZ fibration into one to get $\BGHK$.  
Note that if we consider three successive rays $\rho_{i-1}, \rho_i, \rho_{i+1}$, there is the relation
\begin{align} \label{affine relation}
    \psi_i(v_{i-1}) + D^2_i \psi_i( v_i) + \psi_i(v_{i+1}) =0.
\end{align}

Consider a toric monoid $P$. 
A toric monoid $P$ is a commutative monoid whose Grothendieck group $P^{\mathrm{gp}}$ is a finitely generated free abelian group and $P = P^{\mathrm{gp}} \cap \sigma_{P}$, where $\sigma_{P} \subseteq P^{\mathrm{gp}} \otimes_{\Z} \R =P^{\mathrm{gp}}_{\R}$ is a convex rational polyhedral cone.
We will assume that $P$ comes with a homomorphism $\eta: \mathrm{NE}(Y) \rightarrow P$ of monoids, where $NE(Y)$ is the intersection of the cone generated by effective curves with $A_1(Y,\mathbb{Z})$. In later discussion, we will in particular choose $P= \mathrm{NE}(Y)$ and take $\eta$ to be the identity.

Next we define a  multi-valued $\Sigma$-piecewise linear function as a continuous function $\varphi: | \Sigma | \rightarrow P^{\mathrm{gp}}_{\R}$  such that for each $\sigma_{i,i+1} \in \Sigma_{\max}$, $\varphi_i=\varphi |_{\sigma_{i,i+1}}$ is given by an element in $ \Hom_{\Z} (M , P^{\mathrm{gp}}) = N \otimes_{\Z} P^{\mathrm{gp}}$. For each codimension one cone $\rho=\mathbb{R}_+v_i \in \Sigma$ contained in two maximal cones $\sigma_{i-1,i}$ and $\sigma_{i,i+1}$, we have
\begin{align} \label{bending parameter}
\varphi_{i+1} - \varphi_{i} = n_{\rho} \otimes [D_i],
\end{align}
where $n_{\rho} \in N$ is the unique primitive element annihilating $\rho$ and positive on $\sigma_{i,i+1}$. 
Such data $\{ \varphi_i \}$ gives a local system $\mathcal{P}$ on $B_{\mathrm{GHK},0} $ with the structure of $P^{\mathrm{gp}}_{\R}$-principal bundle $\pi: \PP_0 \rightarrow B_{\mathrm{GHK},0} $.
To determine such a local system, we first construct an affine manifold $\PP_0$ by gluing $U_i \times P^{\mathrm{gp}}_{\R}$ to $U_{i+1} \times P^{\mathrm{gp}}_{\R}$ along $(U_i \cap U_{i+1} )\times P^{\mathrm{gp}}_{\R}$ by
\[
(x,p) \mapsto \left(x,p+\varphi_{i+1}(x) - \varphi_i(x) \right).
\]
The local sections $x \mapsto (x, \varphi_i(x))$ patch to give a piecewise linear section $\varphi: B_{\mathrm{GHK},0}  \rightarrow \PP_0$.
Let $\Lambda_B$ denote the sheaf of integral constant vector fields, and $\Lambda_{B ,\R} := \Lambda_{B} \otimes_{\Z} \R$. 
We can then define 
\[
\mathcal{P} := \pi_* \Lambda_{B,\PP_0} \cong \varphi^{-1} \Lambda_{B,\PP_0}
\]
on $B_{\mathrm{GHK},0} $. There is an exact sequence
\begin{align} \label{eqn:exact seq}
    0 \rightarrow \underline{P}^{\mathrm{gp}} \rightarrow \mathcal{P} \xrightarrow{r} \Lambda_B \rightarrow 0
\end{align}
of local systems on $B_{\mathrm{GHK},0} $, where $r$ is the derivative of $\pi$. Then \eqref{bending parameter} is equivalent to 
 \begin{align}\label{lifting of affine relation}
     \varphi_i(v_{i-1})+\varphi_i(v_{i+1})=[D_i]-D_i^2\varphi_i(v_i), \end{align}
which is the lifting of \eqref{affine relation} to $\mathcal{P}$.
We will describe the symplectic meaning of $P$, $P^{\mathrm{gp}}$, and $\mathcal{P}$ in Section \ref{comparison w/ GHK}, particularly see \eqref{identification of short exact sequence}.


Next we define the canonical scattering diagram $\mathfrak{D}_{can}$ on $(B_{\mathrm{GHK}}, \Sigma)$. 
We will first state the definition of a scattering diagram as in \cite{ghks_cubic} and then restrict to the finite case in this article. 
A {ray} in $\mathfrak{D}_{can}$ is a pair $(\frakd, f_{\frakd} )$ where
\begin{itemize}
    \item $\frakd \subset \sigma_{i,i+1}$ for some $i$, called the {support} of a ray, is a ray generated by $a v_i+bv_{i+1} \neq 0$, $a,b \in \Z_{\geq 0}$;
    \item $\log{f_{\frakd}} = \sum_{k \geq 1}k c_k X_i^{-ak} X_{i+1}^{-bk}\in \Bbbk[P][[X_i^{-a}X_{i+1}^{-b}]] $ with  $c_k$ in the maximal ideal $\frakm \subseteq \Bbbk[P]$.\footnote{At first glance, it looks like there is a sign change comparing this expression with the wall functions of the scattering diagram from Floer theory in Definition \ref{thm:wallfun}. However, such discrepancy is explained in the discussion after Lemma \ref{identification space}. }
\end{itemize}
The coefficient $c_k$ is the generating function of relative Gromov-Witten invariants,
  \begin{align*}
     c_k=\sum_{\beta} N_{\beta}z^{\beta},
  \end{align*} where the summation is over all possible classes $\beta\in H_2(Y,\mathbb{Z})$ with incidence relations $\beta.D_i=ak,\beta.D_{i+1}=bk$ and $\beta.D_j=0$, for $j\neq i,i+1$. The coefficient $N_{\beta}$ is the counting of $\mathbb{A}^1$-curves in such a class $\beta$. We will refer the readers to \cite{GHK}*{Section 3} for technical details of the definition of relative Gromov-Witten invariants and remark that this is mostly replaced by logarithmic Gromov-Witten theory nowadays \cite{GS3}\cite{C3}.

Roughly speaking, a {scattering diagram} for the data $(B_{\mathrm{GHK}}, \Sigma)$ is a set $\frakD = \{ (\frakd, f_{\frakd}) \}$ such that, for every monomial ideal $I$ with $k[P]/I$ being Artinian, $f_{\frakd} (\mbox{ mod }I)$ is a finite sum for each wall function $f_{\frakd}$ and there are only finitely many $f_{\frakd} \neq 1 (\mbox{ mod }I)$ . For notation simplicity, we will write $\mathfrak{D}=\mathfrak{D}_{can}$ later. 
Note that scattering diagrams may give a refinement of the original fan structure given by $\Sigma$. We will call the maximal cones of this refinement chambers.

The scattering diagram will lead to a flat family over $\Spec A_I$, where $A_I = \Bbbk[P]/I$ and $I \subseteq \Bbbk[P]$ is any monomial ideal with $\Bbbk[P]/I$ Artinian. 
Now consider each $\rho_i$ as the support of a ray $(\rho_i, f_i)$ in $\frakD_{can}$.
Define 
\begin{align} \label{can isom}
    R_{i,I} &:= A_I [X_{i-1}, X_i^{\pm 1}, X_{i+1}] / ( X_{i-1} X_{i+1} - z^{[D_i]} X_i^{-D_i^2} f_i ), \notag \\
    R_{i, i+1, I}& := A_I [X_i^{\pm 1}, X_{i+1}^{\pm 1}]\cong (R_{i,I})_{X_{i+1}},
\end{align}
where $z^{[D_i]}$ is the monomial in $\Bbbk[P]$ corresponding to the class of $[D_i]$. 
Let \[U_{i, I} :=  \Spec R_{i,I} \text{ and  }U_{i, i+1, I} := \Spec R_{i, i+1,I}.\] 
One would then like to glue $U_{i,I}$ and $U_{i+1, I}$ over the identified piece $U_{i, i+1, I}$ to obtain a scheme $X_I^{\circ}$ flat over $\Spec A_I$.

To obtain a quasi-affine $X_I^{\circ}$, one needs to consider an automorphism $R_{i,i+1,I}$, called the {path ordered product}, associated to a path $\gamma: [0,1] \rightarrow \Int (\sigma_{i, i+1})$. 
Suppose $\gamma$ crosses a given ray $\left(\frakd = \R_{\geq 0} (av_i + bv_{i+1}), f_{\frakd}\right)$. 
The $A_I$-algebra homomorphism $\theta_{\gamma, \frakd}: R_{i, i+1, I} \rightarrow R_{i, i+1, I}$ is defined by $X_i^{k_i}X_{i+1}^{k_{i+1}} \mapsto X_i^{k_i}X_{i+1}^{k_{i+1}}  f_{\frakd}^{\pm (-bk_i + ak_{i+1} )}$, where the sign $\pm$ is positive if $\gamma$ goes from $\sigma_{i-1, i}$ to $\sigma_{i, i+1}$ when passing through $\frakd$ and is negative if $\gamma$ goes in the opposite direction. One can see this is the same as the wall crossing transformation stated in \eqref{eq:wallcrossing}. 
If $\gamma$ passes through more than one ray, we define the path ordered product by composing each individual path ordered product of each ray in the order $\gamma$ passes them. 
Choosing a path $\gamma$ by starting very close to $\rho_i$ and ending near $\rho_{i+1}$ in $\sigma_{i, i+1}$, we see that $\gamma$ passes all the rays in $\sigma_{i, i+1}$.
We then define $X_{I, \frakD}^{\circ}=\coprod_i U_{i,I}/\sim$ with the gluing given by 
\[
U_{i,I} \hookleftarrow U_{i, i+1, I} \xrightarrow{\theta_{\gamma, \frakD}} U_{i, i+1, I} \hookrightarrow U_{i+1, I}. 
\]
 Let $X^{\circ}_{\mathfrak{D}}=\varprojlim X^{\circ}_{I,\mathfrak{D}}$, which is defined over $\hat{k[P]}$ and $\hat{k[P]}$ is some completion of $k[P]$ with respect to the $I$-adic topology. 
The following observation is important later for the comparison between the Gross-Hacking-Keel mirror and the family Floer mirror in the examples considered in this paper. The observation should be well-known to experts but authors cannot find such statement in the literature. So we include the proof for being self-contained.
\begin{lem} \label{repacement}
	Assume that $(Y,D)$ is a Looijenga pair such that 
	\begin{enumerate}
		\item the intersection matrix $(D_i\cdot D_j)$ is not negative definite.
		\item There are only finitely many rays with non-trivial wall functions in the canonical scattering diagram of $(Y,D)$.
		\item All the wall-functions are polynomials. 
	\end{enumerate}
Then $X^{\circ}_{\mathfrak{D}}$ is defined over $k[P]$ and a generic closed fibre of $X^{\circ}_{\mathfrak{D}}\rightarrow \mbox{Spec}k[P]$ is a partial compactification of gluing of finitely many tori via birational transformation by adding finitely many points. 
\end{lem}
\begin{proof}
	The first assumption is to guarantee that $X^{\circ}_{\mathfrak{D}}$ is defined over $k[P]$ \cite[Theorem 0.2]{GHK}. Let $U_i:=\varprojlim U_{i,I}$ and $U_{i,i+1}:=\varprojlim U_{i,i+1,I}$. Then the one has the canonical isomorphism 
	\begin{align*}
	  \mbox{Spec}(k[P])\times \mathbb{G}_m^2\cong  U_{i,i+1}\cong \{X_{i+1}\neq 0\}\subseteq U_i
	\end{align*} from \eqref{can isom}. Now over a generic closed point of $\mbox{Spec}k[P]$, the fibre of $U_i\rightarrow \mbox{Spec}(k[P])$ is given by $\{X_{i-1}X_{i+1}=z^{[D_i]}X_i^{-D_i^2}f_i\}$\footnote{Over a generic closed point, we have $z^{-[D_i^2]}\in \mathbb{C}^*$.} which is the union of two toric $(\mathbb{G}_m)^2_{(X_i,X_{i+1})},(\mathbb{G}_m)^2_{(X_{i-1},X_{i})}$ and finitely many points $\{X_{i-1}=X_{i+1}=0,f_i=0\}$\footnote{This is because that $f_i$ is a polynomial.} with embeddings
	 \begin{align*}
	    (\mathbb{G}_m)^2_{(X_i,X_{i+1})}\hookrightarrow \{X_{i-1}X_{i+1}=z^{[D_i]}X_i^{-D_i^2}f_i\} \\
	    (X_i,X_{i+1})\mapsto (X_{i+1}^{-1}z^{[D_i]}X_i^{-D_i^2}f_i,X_i,X_{i+1})
	    	 \end{align*} and 
	   \begin{align*}
	  (\mathbb{G}_m)^2_{(X_{i-1},X_{i})}\hookrightarrow \{X_{i-1}X_{i+1}=z^{[D_i]}X_i^{-D_i^2}f_i\} \\
	  (X_i,X_{i+1})\mapsto (X_{i-1},X_i,X_{i-1}^{-1}z^{[D_i]}X_i^{-D_i^2}f_i).
	  \end{align*} 	 
	It is easy to see that $\{X_{i-1}X_{i+1}=z^{[D_i]}X_i^{-D_i^2}f_i\} \setminus \{X_{i-1}=X_{i+1}=0,f_i=0\}$ is the gluing of two tori via the birational transformation 
	 \begin{align*}
	     (\mathbb{G}_m)^2_{(X_{i-1},X_i)}&\dashrightarrow (\mathbb{G}_m)^2_{(X_{i},X_{i+1})} \\
	       (X_{i-1},X_i)&\mapsto (X_i, X_{i-1}^{-1}z^{[D_i]}X_{i}^{-D_i^2}f_i).
	 \end{align*}
	The gluing map $\theta_{\gamma,\mathfrak{D}}$ restricts on the fibre is a composition of birational map and the lemma follows. 
	
    We also point out that one can replace $(Y,D)$ by a minimal resolution on the corners of $D$ such that all the $\mathbb{A}^1$-curves are transversal to the toric boundary divisors. The integral affine manifold $\BGHK$ remains the same \cite[Lemma 1.6]{GHK}. The corresponding relative/log Gromov-Witten invariants are the same \cite{GPS}\cite[Theorem 1.1.1]{AJ}. Thus, the corresponding canonical scattering diagrams are the same under such birational modification.
	
\end{proof}


The next step in \cites{GHK, ghks_cubic} is considering the broken lines to define consistency of a scattering diagram and to construct the theta functions.
Since we will focus on the cases with only finitely many rays in the canonical scattering diagram and wall functions are polynomials in this paper, we can make use of path-ordered products directly without the use of broken lines.
For the definition of broken lines and theta functions, one can refer to \cite[Section 2.2]{GHK}. 
Instead, to define consistency, we can extend the definition of path ordered product to a path $\gamma: [0,1]\rightarrow B_0(\Z)$ with starting point $q$, and endpoint $Q$, where neither $q$ and $Q$ lies on any ray. 
Then the path ordered product $\theta_{\gamma, \frakD}$ can be defined similarly by composing $\theta_{\gamma, \frakd}$'s of the walls $\frakd$ passed by $\gamma$.
Then the canonical scattering diagram $\frakD$ is {consistent} in the sense that the path ordered product $\theta_{\gamma, \frakD}$ only depends on the two end points $q$ and $Q$. The two notions of consistency of scattering diagrams coincide \cite{CPS} (see also \cite[Section 3.2]{GHK}).

For a point $q \in B_0(\Z)$, let us assume $q = av_{i-1} + bv_{i} \in \sigma_{i-1, i}$ and associate the monomial $X_{i-1}^{a} X_{i}^{b}$ to $q$.
Consider now another point $Q\in B_0\setminus \bigcup_{\mathfrak{d}\in \mathfrak{D}_{can}}\mbox{Supp}\mathfrak{d}$ and a path $\gamma$ from $q$ to $Q$, then $\vartheta_{q,Q}=\varprojlim\theta_{\gamma,\mathfrak{D}}(X_{i-1}^aX_i^b) (\mbox{ mod }I)$.
This is well-defined because the canonical scattering diagram $\frakD$ is consistent.
We will define $\vartheta_{0,Q } = \vartheta_0 =1$. 
Thus the $\vartheta_{q, Q}$ for various $Q$ can be glued to give the global function $\vartheta_q \in \Gamma( X_{I, \frakD}^{\circ}, \mathcal{O}_{X_{I, \frakD}^{\circ}})$.
Then, by \cite{GHK}*{Theorem 2.28}, $X_{I, \frakD}:=\Spec \Gamma \left( X_{I, \frakD}^{\circ}, \mathcal{O}_{X_{I, \frakD}^{\circ}} \right) $ is a partial compactification of $X_{I, \frakD}^{\circ}$. 

\subsection{Cluster varieties}\label{sec: cluster}

We will first recall some notations used in the definition of a cluster varieties. 
A \emph{fixed data} consists of a lattice $N$ with a skew-symmetric bilinear form $\{ \cdot , \cdot \} : N \times N \rightarrow \Q$,
an index set $I$ with $|I| = \rank N$,
positive integers $d_i$ for $i \in I$,
a sublattice $N^{\circ} \subseteq N$ of finite index with some integral properties,  the dual lattice $M = \Hom (N, \Z)$ and the corresponding $M^{\circ} = \Hom(N^{\circ}, \Z)$. 
One can refer to \cite{GHK_bir} for the full definition of fixed data. 
Consider $N_{\R} = N \otimes \R$ and $M_{\R} = M \otimes \R$. 

Given this fixed data, a $\emph{seed data}$ for this fixed data is $\textbf{s} := ( e_i \in N \mid i \in I)$,
where  $\{ e_i \}$ is a basis for $N$.
The basis for $M^{\circ}$ would then be $f_i = \frac{1}{d_i} e_i^*$ . 
One can then associate the seed tori 
\[\mathcal A_{\textbf{s}} = T_{N^{\circ}} = \Spec \Bbbk [M^{\circ}], \quad
\quad \mathcal X_{\textbf{s}} = T_{M} = \Spec \Bbbk [N].\]
We will denote the coordinates as $X_i = z^{e_i}$ and $A_i = z^{f_i}$ and they are called the \emph{cluster variables}. 
Similar to the definition of cluster algebras, there is a procedure, called \emph{mutation}, to produce a new seed data $\mu(\textbf{s})$ from a given seed $\textbf{s}$. 
The mutation formula is stated in \cite[Equation 2.3]{GHK_bir} which we will skip here. 
The essence is that we will obtain new seed tori $\mathcal A_{\mu(\textbf{s})}$, $\mathcal X_{\mu(\textbf{s})}$ from the mutated seed. 
Between the tori, there are birational maps $\mu_{\mathcal X}: \mathcal X_{\textbf{s}} \dashrightarrow \mathcal X_{\mu(\textbf{s})}$, $\mu_{\mathcal A}: \mathcal A_{\textbf{s}} \dashrightarrow \mathcal A_{\mu(\textbf{s})}$ as

\[
\mu_{\mathcal X, k}: \mathcal X_{\textbf{s}} \dashrightarrow \mathcal X_{\mu_({\textbf{s}})},
\]
\[
\mu_{\mathcal A, k}: \mathcal A_{{\textbf{s}}} \dashrightarrow \mathcal A_{\mu_({\textbf{s}})}.
\]
via pull-back of functions
\begin{equation} \label{eq:xmutatepull}
\mu_{\cX,k}^* (z^n )= z^n (1+ z^{e_k} )^{-[n,e_k]}  \text{ for } n \in N, 
\end{equation}
\begin{equation}  \label{eq:amutatepull}
\mu_{\cA,k}^* (z^m) = z^m (1+ z^{v_k} )^{-\langle d_k e_k, m \rangle}  \text{ for } m \in M^{\circ}. 
\end{equation}
Note that those birational maps are basically the mutations of cluster variables as in Fomin and Zelevinsky \cite{cluster1}. 
Let $V_{\mathcal A}$ be an union of tori glued by $\mathcal A$-mutations of the form $\mu_{\mathcal A}$.
A smooth scheme $V$ is a {\emph{$\cA$-cluster variety}} if there is a birational map $ \mu: V \dashrightarrow V_{\mathcal A}$ which is an isomorphism outside codimension two subsets of the domain and range. {\emph{$\cX$-Cluster varieties}} are defined analogously. 
Gross-Hacking-Keel-Kontsevich \cite{GHKK} constructed cluster scattering diagrams and showed that the cluster monomials can be expressed as theta functions defined from the cluster scattering diagrams. 
The collections of the theta functions form the bases of the (middle) cluster algebras defined by Fomin-Zelevinsky \cite{cluster1}.

Now we will focus on the three dimension two cases relevant in this paper:
the fixed data\footnote{We follow the definition of fixed data as in \cite{GHK_bir}} are given by the bilinear form 
$\begin{pmatrix}
  0 & 1\\
  -1 & 0
\end{pmatrix}$ and the scalars $d_1,  d_2 \in \N$. We say that the cluster algebra is of 
\begin{enumerate}
	\item Type $A_2$ if $d_1=d_2=1$.
	\item Type $B_2$ if $d_1=1,d_2=2$.
	\item Type $G_2$ if $d_1=1,d_2=3$.
\end{enumerate}
In these cases, the relations between the generators $\vartheta_i$ in the cluster complex of the (middle) $\mathcal X$ cluster algebras can be expressed as 
\begin{align} \label{eq:rank2eq}
     \vartheta_{i-1} \cdot \vartheta_{i+1} &= 
     \begin{cases}
(1+\vartheta_i)^{d_1}, & \mbox{if $i$ is odd} \\
(1+\vartheta_i)^{d_2}, & \mbox{if $i$ is even},
\end{cases}
\end{align}
where $i \in \Z$. 
Also the so-called injectivity assumption holds and the skew-symmetric forms are of full rank in these cases. Thus, the $\cA$-scattering diagrams in Figure \ref{fig:rank2} and $\cX$-cluster scattering diagrams are defined \cite{GHKK}\cite{broken}. We include them in Figure \ref{fig:rank2} and Figure \ref{fig:Xrank2} below. 
\begin{figure}[h!]   
	\begin{center} 
		\centerline{\includegraphics[scale= .7]{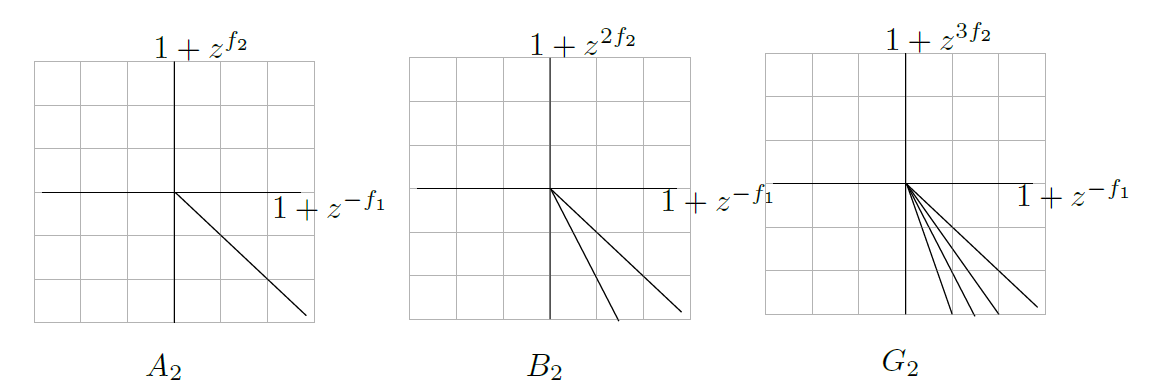}}
		\caption{$\cA$-scattering diagrams for rank 2 finite type \cite{vianna}.} \label{fig:rank2}
	\end{center}
\end{figure}
\begin{figure}[h!]
	\begin{center}
		\centerline{\includegraphics[scale= .7]{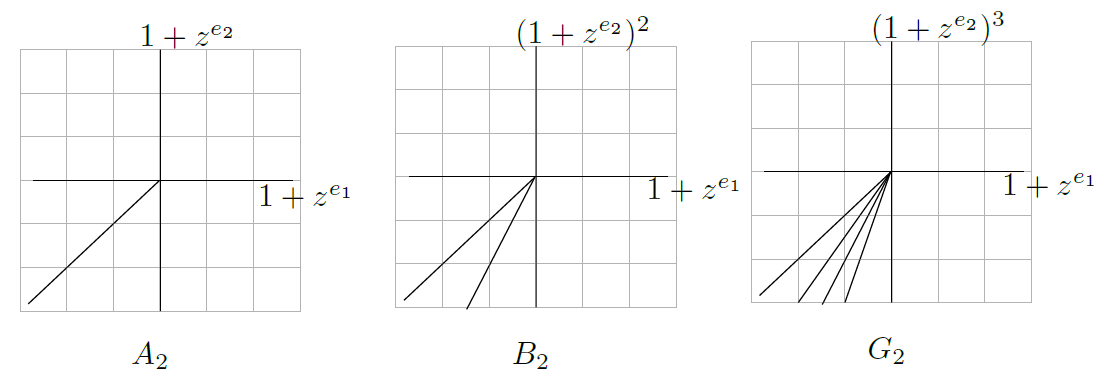}}
		  \caption{$\cX$-scattering diagrams for rank 2 finite type \cite{vianna}.} \label{fig:Xrank2}
	\end{center}
\end{figure}
 In particular, in the cases of concerned (type $A_2,B_2,G_2$) are all of finite type, i.e. the walls of the scattering diagrams do not accumulated and the walls of the scattering diagram divided $\mathbb{R}^2$ into finitely many chambers. The algebra generated by theta functions coincides with ring of regular functions on the following geometry: each tori corresponds a chamber of the scattering diagram and the a gluing birational map associate to adjacent chambers determined by the wall function of the wall \cite[Theorem 0.12 (2)]{GHKK}.\footnote{The theorem shows that the cluster algebra is contained in the algebra generated by theta functions. The finiteness assumption and the definition of cluster algebra implies that the other direction of the inclusion.}
 Since the $\mathcal{X}$-cluster varieties in these cases are affine \cite[Theorem 0.17]{GHKK}, 
we conclude that the $\cX$-cluster varieties, a priori defined as gluing to tori associated to seeds, can also be derived from the gluing of tori with each tori corresponds to a chamber in the corresponding scattering diagrams. Notice that the latter perspective is similar to the mirror construction of Gross-Hacking-Keel \cite{GHKlog}. 

\section{Set-Up of the Geometry} \label{section: setup}
We will start from the geometry we will be computing the family Floer mirrors. 
Consider $Y'$ an extremal rational elliptic surface with its singular configuration one of the following: $II^*II$, 
$ III^*III$, $IV^*IV$. Here we use the notation in the Kodaira classification of the singular fibres. Let $Y'=Y'_*$, where $* = II, III$ or $IV$ is the type of singular fibre over zero.
These rational elliptic surfaces can be constructed explicitly. 
For instance, consider the compact complex surface given by the minimal resolution of  
  \begin{align}  \label{15}
     \{ ty^2z=tx^3+uz^3 \}\subseteq \mathbb{P}^2_{(x,y,z)}\times \mathbb{P}^1_{(u,t)}.
  \end{align} 
By the Tate algorithm \cite{T5}, this is an elliptic surface with a type $II^*$ singular fibre over $u=\infty$. Straight-forward calculation shows that it has singular configuration $II^*II$ and produces the extremal rational elliptic surface $Y'_{II}$. By Castelnuovo's criterion of rationality, $Y'$ is rational and thus a rational elliptic surface. 
The other extremal rational elliptic surfaces $Y'_{III},Y'_{IV}$ can be constructed similarly with the corresponding affine equations below \cite{Miranda}*{p.545}:  
  \begin{align*}
     y^2&=x^4+u\\ 
     y^2&=x^3+t^2s^4  
  \end{align*} 

Recall that any rational elliptic surface $Y'$ has canonical bundle $K_{Y'}=\mathcal{O}_{Y'}(-D')$, where $D'$ denotes an elliptic fibre. Thus, there exists a meromorphic $2$-form $\Omega'$ with a simple pole along an assigned fibre which is unique up to a $\mathbb{C}^*$-scaling. In particular, the non-compact surface $X'=Y'\backslash D'$ can be viewed as a non-compact Calabi-Yau surface. Indeed,
  \begin{thm} \cite{H}
  	 There exists a Ricci-flat K\"ahler form $\omega'$ on $X'$ for any choice of the fibre $D'$. In particular, $2\omega'^2=\Omega'\wedge \bar{\Omega'}$, and $X'$ is hyperK\"ahler. 
  \end{thm}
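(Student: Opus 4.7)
The plan is to invoke a Tian-Yau type existence theorem for complete Ricci-flat Kähler metrics on quasi-projective manifolds with an anticanonical boundary, adapted to the elliptically fibred setting. Since $K_{Y'} = \mathcal{O}_{Y'}(-D')$, the form $\Omega'$ trivializes the canonical bundle on $X'$, so what must be produced is a Kähler form $\omega'$ on $X'$ with $(\omega')^2 = \tfrac12 \Omega' \wedge \bar\Omega'$ and completeness at the end near $D'$.

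First I would construct a background Kähler form $\omega_0$ on $X'$ that is almost Ricci-flat near infinity. The natural candidate is the semi-flat metric associated with the elliptic fibration $Y' \to \mathbb{P}^1$: on the preimage of a punctured disc around the point in the base over which $D'$ sits, the smooth fibres are elliptic curves, and one can put a fibrewise flat metric with periods varying according to the variation of Hodge structure, and then add a base contribution of the form $\pi^*\omega_{\mathrm{base}}$ chosen so that the total volume form matches $\tfrac12 \Omega'\wedge \bar\Omega'$ to leading order. Cutting off and gluing to any Kähler form on the compact part gives a global Kähler form $\omega_0$ on $X'$ that is complete at the $D'$-end (because the period of the vanishing cycle grows logarithmically) and satisfies $(\omega_0)^2 = \tfrac12 e^{f}\Omega'\wedge \bar\Omega'$ for some function $f$ with controlled decay at the end.

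Second, I would solve the complex Monge–Ampère equation
\[
(\omega_0 + i\partial\bar\partial u)^2 = \tfrac12 \Omega'\wedge \bar\Omega'
\]
for a bounded function $u$ with appropriate decay, by applying the Tian–Yau existence theorem for complete Ricci-flat Kähler metrics on complements of smooth (or more generally, simple normal crossing) anticanonical divisors. The hypotheses of Tian–Yau — SOB or ALG/ALH volume growth, suitable decay of $f$, existence of a good weighted Sobolev framework — must be verified in the elliptic end, and this verification is the analytic heart of the argument. For fibred ends of the type arising from removing a Kodaira fibre from a rational elliptic surface, this verification was carried out by Hein in his thesis, which is the result being cited. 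Setting $\omega' := \omega_0 + i\partial\bar\partial u$ yields a complete Kähler form with $2(\omega')^2 = \Omega'\wedge \bar\Omega'$, so the Ricci form $-i\partial\bar\partial \log(\omega')^2 = -i\partial\bar\partial \log(\Omega'\wedge \bar\Omega')$ vanishes because $\Omega'$ is a nowhere-zero holomorphic section of $K_{X'}$.

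Third, I would upgrade Ricci-flatness to the hyperKähler statement. In complex dimension two, a Ricci-flat Kähler metric with trivial canonical bundle has holonomy contained in $SU(2) = Sp(1)$, so the manifold is automatically hyperKähler. Concretely, writing $\Omega' = \omega_J + i\omega_K$ gives two parallel Kähler forms that together with $\omega_I := \omega'$ satisfy the quaternionic relations, which establishes the hyperKähler structure. The main obstacle is the second step: justifying the weighted analysis at the elliptic end, which depends on the precise asymptotic model of $\omega_0$. Once that is in place, the normalization factor ensuring $2(\omega')^2 = \Omega'\wedge \bar\Omega'$ comes for free from the Monge–Ampère equation, and the rest is standard.
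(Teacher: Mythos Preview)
The paper does not give its own proof of this theorem: it is stated with a citation to Hein \cite{H} and used as a black box. So there is nothing to compare your argument against in the paper itself. Your outline is a faithful sketch of Hein's strategy (semi-flat model at the end, Tian--Yau/Hein weighted Monge--Amp\`ere analysis, then $SU(2)=Sp(1)$ holonomy in complex dimension two), and as such is appropriate.

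One small correction worth flagging: in the examples at hand the removed fibre $D'$ is of type $II^*$, $III^*$, or $IV^*$, all of which have \emph{finite-order} monodromy. The period of a cycle near such a fibre does not grow logarithmically as you wrote; it behaves like a fractional power of the base coordinate (compare the paper's later computation $Z_{\gamma}(u)\sim u^{5/6}$, $u^{3/4}$, $u^{2/3}$). The end is of ALG type rather than ALH, and completeness comes from the power-law blow-up of the base metric, not a logarithmic one. This does not affect the overall structure of your argument, only the specific asymptotic model you feed into the weighted analysis.
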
 
  The Ricci-flat K\"ahler form may not be unique (even in a given cohomology class \cite{CV}), but we will just fix one for later purposes and family Floer mirror computations will not be affected by such choices.   
 Consider $D'_*$ to be the infinity fibre in $Y'_{*}$ and denote the hyperK\"ahler rotation of $X'_{*}=Y'_{*}\setminus D'_*$ by $X=X_{*}$. Explicitly, $X_*$ has the same underlying space as $X'_{*}$ and is equipped with the K\"ahler form and the holomorphic volume form 
  \begin{align}\label{HK}
   \omega&=\mbox{Re}\Omega' \notag \\
   \Omega&=\omega'-i\mbox{Im}\Omega'.
  \end{align}
   Then the elliptic fibration $X'_*\rightarrow \mathbb{C}$ becomes the special Lagrangian fibration $X_* \rightarrow B$, where $B\cong \mathbb{R}^2$ \cite{HL} (see the diagram below) topologically. Let $B_0\cong \mathbb{R}^2\setminus \{0\}$ be the complement of the discriminant locus. We will refer the readers to \cite{CJL}*{P.35} for more explicit calculation of hyperK\"ahler (HK) rotation. We will omit the subindex when there is no confusion.    

\begin{center}
    \begin{tikzcd}
Y'  \arrow[d] \arrow[r, hookleftarrow]&   X' =  \arrow[d]  Y' \setminus D' \arrow[r, leftrightarrow, "\mathrm{HK}"]   & X  \arrow[d]\\
\PP^1    \arrow[r, hookleftarrow] & B \cong \C \arrow[r] & B \cong \R^2
\end{tikzcd}
\end{center}

 The fibrewise relative homology $H_2(X,L_u)$ glues to a local system of lattices $\Gamma$ over $B_0$. For any relative class $\gamma\in H_2(X,L_u)$, we define the {central charge} 
 \begin{align*}
     Z_{\gamma}(u):=\int_{\gamma}\Omega'
 \end{align*}
to be a function from the local system $\Gamma$ to $\mathbb{C}$. Notice that $B_0\cong\mathbb{C}^*$ admits a complex structure structure and $Z_{\gamma}$ is locally\footnote{Since there is monodromy, it is a multi-valued function on $B_0$} a holomorphic function in $u$ by \cite[Corollary 2.8]{L4}. The central charge will help to locate the special Lagrangian torus fibre bounding holomorphic discs in Section \ref{section: BPS rays}.

\subsection{Affine Structures of the Base} \label{affine}
Let $(X,\omega)$ be a K\"ahler surface with holomorphic volume form $\Omega$ satisfying $2\omega^2=\Omega\wedge\bar{\Omega}$. Assume that $X$ admits a special Lagrangian fibration $\pi: X\rightarrow B$, possibly with singular fibres with respect to $(\omega,\Omega)$, i.e., $\omega|_L=\mbox{Im}\Omega|_L=0$ for any fibre $L$.
We will use $L_u$ to denote the fibre over $u\in B$.  There are two natural integral affine structures defined on $B_0$ by Hitchin \cite{H2}. One is called the symplectic affine structure and the other one is the complex affine structure, both are described below. Given a reference point $u_0\in B_0$ and a choice of the basis $\check{e}_1,\check{e}_2\in H_1(L_{u_0},\mathbb{Z})$, we will define the local affine coordinates around $u_0$. For any $u\in B_0$ in a contractible neighborhood of $u_0$, we choose a path $\phi$ contained in such neighborhood connecting $u,u_0$. Let $C_k$ be the $S^1$-fibration over the image of $\phi$ such that the fibres are in the homology class of the parallel transport of $\check{e}_k$ along $\phi$. Then the local symplectic affine coordinates are defined by
   \begin{align}\label{affine coor}
      x_k(u)=\int_{C_k}\omega.
   \end{align} It is straight-forward to check that the transition functions fall in $GL(2,\mathbb{Z})\rtimes\mathbb{R}^2$,\footnote{  If there is a global Lagrangian section, then the transition functions fall in $GL(2,\mathbb{Z})$. } and thus the above coordinates give an integral affine structure on $B_0$. Replacing $\omega$ in (\ref{affine coor}) by $\mbox{Im}\Omega$ gives the complex integral affine coordinates $\check{x}_k(u)=\int_{C_k}\mbox{Im}\Omega$. 
    \begin{remark}\label{rk:affine}
   By the construction, primitive classes $\check{e}\in H_1(L_u,\mathbb{Z})$ are in one-to-one correspondence with the primitive integral vectors in $(T_{\mathbb{Z}}B_0)_u$. Indeed, each $v\in T_uB_0$ has a corresponding functional $  \int_{-}\iota_{v}\mbox{Im}\Omega$ on $H_1(L_u,\mathbb{Z})$ and thus corresponds to a primitive element in $H_1(L_u,\mathbb{Z})$ via its natural symplectic pairing and the Poincare duality. 
   \end{remark}
 
   \begin{remark} \label{affine line}
 Since special Lagrangians have vanishing Maslov classes, all the special Lagrangian torus fibres only bound Maslov index zero discs and the assumption of the Lemma \ref{compatibility} holds. In general, it is hard to control all the bubbling of pseudo-holomorphic discs (of Maslov index zero). However, when the Lagrangian fibration is further special, the special Lagrangian fibres bounding holomorphic discs of a fixed relative class maps to an affine line with respect to the complex affine structure by the fibration map $\pi$. Indeed, if $u_t$ is a path in $B_0$ such that
 each $L_{u_t}$ bounds a holomorphic disc in class $\gamma$ (we identify the relative classes via parallel transport along the path $u_t$), then $\int_{\gamma}\mbox{Im}\Omega=0$ along $u_t$ since $\gamma$ can be represented by a holomorphic cycle and cannot support a non-zero holomorphic $2$-form. In particular, $u_t$ is contained in an affine line with respect to the complex affine structure, which locally denoted by $l_{\gamma}$.\footnote{The notation $\gamma$ and thus $l_{\gamma}$ are only locally defined since the monodromy of the fibration acts non-trivially on the local system $\Gamma$} Notice that $l_{\gamma}$ is naturally oriented such that the symplectic area of $\gamma$ is increasing along $l_{\gamma}$. 
From the expected dictionary in the introduction, these affine lines correspond to rays in the scattering diagrams, and Lemma \ref{compatibility} translates to the consistency of scattering diagrams.
\end{remark}
   
   We will use both integral affine structures later: the complex affine structures will be used to locate the fibres bounding holomorphic discs (see Section \ref{section: BPS rays}) while the symplectic affine structures will be used to define the family Floer mirrors (see Section \ref{sec:construction}).

 \section{Floer Theory and Family Floer Mirror} \label{section: family Floer mirror}
 In this section, we will talk about the background for the explicit calculation of a family Floer mirror in Section \ref{section: dp5}. We will review the construction of family Floer mirror of Tu \cite{T4} in Section \ref{sec:construction}. 
Given a Lagrangian torus fibration $X\rightarrow B$ with fibre $L_u$ over $u\in B$, Fukaya-Oh-Ohta-Ono \cite{FOOO} constructed $A_{\infty}$ algebras on de Rham cohomologies of the fibres. 
 Assuming that the fibres are unobstructed, then the exponential of the corresponding Maurer-Cartan spaces are the analogue of the dual tori for the original Lagrangian fibration. 
 Then the family Floer mirror is the gluing of these exponential of Maurer-Cartan spaces as a set. The gluing morphisms, known as the "quantum correction" to the mirror complex structure, are induced by the wall-crossing of the Maurer-Cartan spaces. Such wall-crossing phenomena receive contributions from the holomorphic discs of Maslov index zero with boundaries on SYZ fibres. We review the relation of the open Gromov-Witten invariants and the gluing morphisms in Section \ref{sec:fukayatrick}. To have a better understanding of the gluing morphisms, in Section \ref{section: BPS rays} we study the location of all fibres with non-trivial open Gromov-Witten invariants within the geometry discussed in Section \ref{section: setup} by taking advantage of the special Lagrangian boundary conditions.

  \subsection{Fukaya's Trick and Open Gromov-Witten Invariants}\label{sec:fukayatrick}
  We will first review the so-called Fukaya's trick, which is a procedure for comparing the variation of the $A_{\infty}$ structures of a Lagrangian and those of its nearby deformations. We will use Fukaya's trick to detect the open Gromov-Witten invariants.
  
  Let $X$ be a symplectic manifold with Lagrangian fibration $X\rightarrow B$.
Recall the definition of the {Novikov field},
  \begin{align*}
    \Lambda:=\left\{\sum_{i\in \mathbb{N}}c_iT^{\lambda_i} \middle| \quad \lambda_i\in \mathbb{R},\lim_{i\rightarrow \infty}\lambda_i=\infty, c_i\in \mathbb{C}\right\}.
  \end{align*}  Denote  $\Lambda_+=\{\sum_{i\in \mathbb{N}}c_iT^{\lambda_i}|\lambda_i>0\}$
  and $\Lambda^*=\Lambda\backslash \{0\}$. There is a natural discrete valuation 
  \begin{align*}
      \mbox{val}: \Lambda^* \longrightarrow &\mathbb{R} \\
                  \sum_{i\in \mathbb{N}}c_iT^{\lambda_i}\mapsto & \lambda_{i_0},
  \end{align*}
where $i_0$ is the smallest $i$ with $c_i\neq 0$. 
One can extend the domain of $\mbox{val}$ to $\Lambda$ by setting $\mbox{val}(0)=\infty$. 

Let $B_0$ be the complement of the discriminant locus of the special Lagrangian fibration and $L_u$ be the fibre over $u\in B_0$. Fix an almost complex structure $J$ which is compatible with $\omega$. For our later purposes, $X$ will be K\"ahler and we will take $J$ to be its complex structure. Given a relative class $\gamma\in H_2(X,L_u)$, we use $\mathcal{M}_{\gamma}((X,J),L_u)$ to denote the moduli space of stable $J$-holomorphic  discs in relative class $\gamma$ with respect to the almost complex structure $J$. We may omit $J$ if there is no confusion. Fukaya-Oh-Ohta-Ono \cite{FOOO} constructed a filtered unital $A_{\infty}$ structure $\{m_k\}_{k\geq 0}$ on $H^*(L_u,\Lambda)$ by considering the boundary relations of $\mathcal{M}_{\gamma}((X,J),L_u)$, for all $\gamma\in H_2(X,L_u)$. We will assume that there exist only Maslov index zero discs in $X$. If we restrict to $\mbox{dim}_{\mathbb{R}}X=4$, then the moduli space $M_{\gamma}((X,J),L_u)$ has virtual dimension negative one. In particular, the Maurer-Cartan space associate to the $A_{\infty}$ structure is simply $H^1(L_u,\Lambda_+)$. 
  
  Now we explain {Fukaya's trick}.
  Given $p\in B_0$ and a path $\phi$ contained in a small neighborhood of $p$ such that  $\phi(0)=u_-, \phi(1)=u_+$, one can choose a $1$-parameter family of paths $\phi_s(t)$ such that $\phi_0(t)=\phi(t), \phi_1(t)=p$ and
       $\phi_s(t)$ is contained in a small enough neighborhood of $p$. 
It is illustrated as follows:
\begin{figure}[H]
\centering
\def\svgwidth{150pt}
\begingroup%
  \makeatletter%
  \providecommand\color[2][]{%
    \errmessage{(Inkscape) Color is used for the text in Inkscape, but the package 'color.sty' is not loaded}%
    \renewcommand\color[2][]{}%
  }%
  \providecommand\transparent[1]{%
    \errmessage{(Inkscape) Transparency is used (non-zero) for the text in Inkscape, but the package 'transparent.sty' is not loaded}%
    \renewcommand\transparent[1]{}%
  }%
  \providecommand\rotatebox[2]{#2}%
  \newcommand*\fsize{\dimexpr\f@size pt\relax}%
  \newcommand*\lineheight[1]{\fontsize{\fsize}{#1\fsize}\selectfont}%
  \ifx\svgwidth\undefined%
    \setlength{\unitlength}{280.15807144bp}%
    \ifx\svgscale\undefined%
      \relax%
    \else%
      \setlength{\unitlength}{\unitlength * \real{\svgscale}}%
    \fi%
  \else%
    \setlength{\unitlength}{\svgwidth}%
  \fi%
  \global\let\svgwidth\undefined%
  \global\let\svgscale\undefined%
  \makeatother%
  \begin{picture}(1,0.78302071)%
    \lineheight{1}%
    \setlength\tabcolsep{0pt}%
    \put(0,0){\includegraphics[width=\unitlength,page=1]{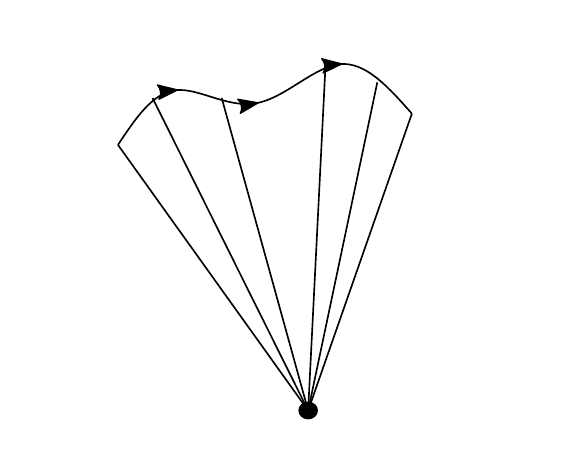}}%
    \put(0.53913857,0.03408317){\color[rgb]{0,0,0}\makebox(0,0)[lt]{\lineheight{1.25}\smash{\begin{tabular}[t]{l}$p$\end{tabular}}}}%
    \put(0.34689794,0.67438226){\color[rgb]{0,0,0}\makebox(0,0)[lt]{\lineheight{1.25}\smash{\begin{tabular}[t]{l}$\phi(t)$\end{tabular}}}}%
    \put(0.05953242,0.50101906){\color[rgb]{0,0,0}\makebox(0,0)[lt]{\lineheight{1.25}\smash{\begin{tabular}[t]{l}$\phi(0)$\\=$u_-$\end{tabular}}}}%
    \put(0.7211385,0.5471822){\color[rgb]{0,0,0}\makebox(0,0)[lt]{\lineheight{1.25}\smash{\begin{tabular}[t]{l}$\phi(1)$\\$=u_+$\end{tabular}}}}%
    \put(0,0){\includegraphics[width=\unitlength,page=2]{path.pdf}}%
  \end{picture}%
\endgroup%

    \caption{Fukaya's trick} \label{path}
\end{figure}

  Then there exists a $2$-parameter family of fibrewise preserving diffeomorphisms $f_{s,t}$ of $X$ such that
  \begin{enumerate}
      \item $f_{s,1}=id$. 
      \item $f_{s,t}$ sends $L_{\phi_s(t)}$ to $L_p$.
      \item $f_{s,t}$ is the identity outside the preimage of a compact subset of $B_0$.
  \end{enumerate}
   Then $J_t=(f_{1,t})_*J$ is a $1$-parameter family of almost complex structures tamed with respect to $\omega$ when $\phi$ is contained in a small enough neighborhood of $p$. There is a canonical isomorphism of moduli spaces of holomorphic discs 
        \begin{align} \label{two moduli}
 \mathcal{M}_{k,\gamma} ( (X,J),L_{\phi(t)})
 \cong \mathcal{M}_{k,(f_{1,t})_*\gamma}\big((X,(f_{1,t})_*J), L_p\big)
         \end{align}
which carries over to the identification of the Kuranishi structures. However, the $A_{\infty}$ structures from two sides of (\ref{two moduli}) are not the same under the parallel transport $H^*(L_{\phi(t)},\Lambda)\cong H^*(L_{p},\Lambda)$ because of the difference of the corresponding symplectic area (also known as the flux)  
         \begin{align*}
            \int_{{f_{1,t}}_*\gamma}\omega-\int_{\gamma}\omega =\left\langle \sum_{k=1}^n \big(x_k(\phi(t))-x_k(p)\big) e_k, \  \partial \gamma \right\rangle,
         \end{align*} where $e_i\in H^1(L_p,\mathbb{Z})$ is an integral basis. 
         
         From the $1$-parameter family of almost complex structures $J_t$, one can construct a pseudo-isotopy of unital $A_{\infty}$ structures on $H^*(L_p,\Lambda)$, connecting the $A_{\infty}$ structures on $H^*(L_p,\Lambda)$ from $u_{\pm}$ (for instance see \cite{F2}). This induces a pseudo-isotopy of $A_{\infty}$ structures from $H^*(L_p,\Lambda)$ to itself. In particular, this induces an isomorphism on the corresponding Maurer-Cartan spaces, which are isomorphic to $H^1(L_p,\Lambda_+)$ because of the dimension,
 \begin{align}\label{wc}
 \Phi:H^1(L_p,\Lambda_+)\rightarrow H^1(L_p,\Lambda_+).
 \end{align}                   A priori this is not the identity if $L_{\phi(t)}$ bounds holomorphic discs of Maslov index zero for some $t\in [0,1]$ \cite{FOOO}. 
The following lemma states that $\Phi$ only depends on the homotopy class of the path $\phi$.          \begin{lem}\cite{T4}*{Theorem 2.7} \label{compatibility}
         	$\Phi\equiv 1$ (mod $\Lambda_+$) and $\Phi$ only depends on the homotopy class of $\phi$ assuming no appearance of negative Maslov index discs in the homotopy. In particular, if $\phi$ is a contractible loop, then the corresponding $\Phi=1$ (before modulo $\Lambda_+$).\footnote{Lemma 4.3.15 \cite{FOOO} showed that the homotopic $A_{\infty}$-homomorphisms induced the same map on the Maurer-Cartan spaces.}
         \end{lem}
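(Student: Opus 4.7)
The plan is to establish the three claims in order, extracting the relevant ingredients from the Fukaya--Oh--Ohta--Ono pseudo-isotopy machinery already invoked in the statement.

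\textbf{Step 1: Reduction mod $\Lambda_+$.} First I would observe that every $J$-holomorphic disc with boundary on $L_u$ has strictly positive symplectic area, so the contribution of each nonconstant disc to the $A_\infty$ operations $m_k$ lies in $\Lambda_+ \cdot H^*(L_u,\C)$. Consequently, modulo $\Lambda_+$, the filtered $A_\infty$ structure on $H^*(L_p,\Lambda)$ obtained from either endpoint of $\phi$ reduces to the classical (purely topological) $A_\infty$ structure on $H^*(L_p,\C)$, with no dependence on $J$ or on $u$. The pseudo-isotopy built from $\{J_t\}$ therefore reduces mod $\Lambda_+$ to a pseudo-isotopy of the classical structure with itself. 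Since the induced map on Maurer--Cartan elements is obtained by integrating this pseudo-isotopy and the classical structure is fixed, the induced map reduces to the identity on $H^1(L_p,\Lambda_+)/\Lambda_+^2 \cdot H^1$, giving $\Phi \equiv \mathrm{id} \pmod{\Lambda_+}$.

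\textbf{Step 2: Homotopy invariance.} Now suppose $\phi_0$ and $\phi_1$ are two paths from $u_-$ to $u_+$, homotopic rel endpoints through paths $\phi_s$ contained in a neighborhood of $p$ avoiding the singular locus and such that no fibre along the homotopy bounds negative Maslov index discs. I would promote Fukaya's trick to a $2$-parameter family: choose fibrewise diffeomorphisms $f_{s,t}$ with $f_{s,1}=\mathrm{id}$ and $f_{s,t}(L_{\phi_s(t)}) = L_p$, producing a $2$-parameter family of $\omega$-tame almost complex structures $J_{s,t} = (f_{s,t})_* J$. Standard parametrized Kuranishi-structure arguments (as in \cite{FOOO}) then yield a \emph{homotopy of pseudo-isotopies} $\{m^{s,t}_k\}$ on $H^*(L_p,\Lambda)$ with the $s=0$ and $s=1$ slices being the pseudo-isotopies induced by $\phi_0$ and $\phi_1$. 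The hypothesis of no negative Maslov index discs is essential here: it rules out the codimension-one wall crossings that would obstruct the transversality needed to extend the Kuranishi structure over the $2$-parameter family. The Maurer--Cartan maps induced by the two pseudo-isotopies are then $A_\infty$-homotopic, and the cited Lemma 4.3.15 of \cite{FOOO} shows that $A_\infty$-homotopic homomorphisms induce the same map on Maurer--Cartan spaces, so $\Phi$ depends only on $[\phi]$.

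\textbf{Step 3: Contractible loops.} If $\phi$ is a contractible loop at $p$, then by Step 2 we may replace $\phi$ by the constant loop at $p$. For the constant loop one may take $f_{s,t}=\mathrm{id}$ identically, and the resulting pseudo-isotopy is the constant family at the $A_\infty$ structure on $H^*(L_p,\Lambda)$. The induced map on Maurer--Cartan spaces is then strictly the identity, not merely modulo $\Lambda_+$, proving the last assertion.

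\textbf{Main obstacle.} The substantive technical point is Step 2: one must upgrade the $1$-parameter pseudo-isotopy construction of \cite{FOOO} to a $2$-parameter family whose boundary restrictions recover the two individual pseudo-isotopies, and then verify that the resulting $A_\infty$ homotopy descends correctly to Maurer--Cartan spaces. This requires the absence of negative Maslov index discs throughout the homotopy to guarantee that the relevant moduli spaces have the expected virtual dimensions and that their boundaries are accounted for purely by the $A_\infty$ relations; in the special Lagrangian setting of this paper the Maslov index vanishes identically, so this hypothesis is automatic and Step 2 applies verbatim.
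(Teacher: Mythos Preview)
The paper does not actually prove this lemma: it is quoted as \cite{T4}*{Theorem 2.7}, with the footnote pointing to \cite{FOOO}*{Lemma 4.3.15} for the fact that homotopic $A_\infty$-homomorphisms induce the same map on Maurer--Cartan spaces. There is therefore no ``paper's own proof'' to compare against beyond these citations.

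Your sketch is consistent with what those citations indicate. Step~1 is the standard energy-filtration argument; Step~2 is precisely the mechanism the footnote alludes to (build a $2$-parameter pseudo-isotopy, get $A_\infty$-homotopic morphisms, then invoke \cite{FOOO}*{Lemma 4.3.15}); and Step~3 is the obvious specialization. One small wording issue: in Step~2 you say the absence of negative Maslov index discs ``rules out the codimension-one wall crossings that would obstruct the transversality needed to extend the Kuranishi structure.'' This is not quite the right diagnosis---the Kuranishi structures exist regardless; the point is rather that negative Maslov index discs can bubble in the $2$-parameter family and contribute extra boundary strata to the relevant moduli spaces, spoiling the homotopy relation between the induced $A_\infty$-morphisms. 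But the conclusion and the overall architecture of your argument are correct.
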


The explicit form of $\Phi$ can be computed in the case of the hyperK\"ahler surfaces assumption and one can see that $\Phi $ acts like wall crossing in the Gross-Siebert program from the theorem below.
\begin{thm} \label{thm:wallcrossing}
\cite{L8}*{Theorem 6.15} Let $\gamma$ be a primitive relative class and
assume that $k\gamma$, with $k\in \mathbb{N}$ are the only relative classes such that $L_{\phi(t)}$ bound holomorphic discs. Suppose that $\mbox{Arg}Z_{\gamma}(u_-)<\mbox{Arg}Z_{\gamma}(u_+)$ (Check Remark \ref{rk:loop} for the discussion of the signs).
	Then the transformation $\Phi$ is given by 
	\begin{align} \label{eq:wallcrossing}
	\mathcal{K}_{\gamma}:z^{\partial\gamma'}\mapsto z^{\partial \gamma'}f_{\gamma}^{\langle \gamma',\gamma\rangle}, 
	\end{align} for some power series $f_{\gamma}\in 1+T^{\omega(\gamma)}z^{\partial\gamma}\mathbb{R}[[T^{\omega(\gamma)}z^{\partial\gamma}]]$. Here $\langle \gamma',\gamma\rangle$ denotes the intersection pairing of the corresponding boundary classes in the torus fibre.
\end{thm}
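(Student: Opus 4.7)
The plan is to apply Fukaya's trick to reduce the computation to a $1$-parameter family of $A_\infty$-structures on the single Lagrangian $L_p$, extract the induced transformation on Maurer--Cartan spaces in closed form, and recognize it as a Kontsevich--Soibelman automorphism. First I would use the fibrewise diffeomorphisms $f_{1,t}$ to pull everything back to $L_p$, producing a $1$-parameter family of tamed almost complex structures $J_t$ together with the identification \eqref{two moduli} of moduli spaces; following Fukaya--Oh--Ohta--Ono, the family $\{J_t\}$ gives a pseudo-isotopy of unital filtered $A_\infty$-structures on $H^*(L_p, \Lambda)$ interpolating between $u_\pm$. Since $\dim_{\mathbb{R}} X = 4$ and all discs are of Maslov index zero, the Maurer--Cartan space reduces tautologically to $H^1(L_p, \Lambda_+)$ and the induced map $\Phi$ is automatically well-defined, with only classes in the family $\{k\gamma\}_{k\geq 1}$ contributing by hypothesis.

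Next I would unwind the general pseudo-isotopy formula on $H^1(L_p, \Lambda_+)$. Viewing functions on the formal thickening as the completed group ring of $H_1(L_p,\mathbb{Z})$, with $z^{\partial\gamma'}$ the character of $\partial\gamma'$, the pseudo-isotopy produces
\begin{equation*}
\log \Phi(z^{\partial\gamma'}) \;=\; \log z^{\partial\gamma'} + \sum_{k\geq 1} n_{k\gamma}\, \langle \partial\gamma',\, k\partial\gamma\rangle\, T^{k\omega(\gamma)}\, z^{k\partial\gamma},
\end{equation*}
where $n_{k\gamma}$ is the open Gromov--Witten invariant counting Maslov-zero discs with one point-constrained boundary marking in class $k\gamma$. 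Using bilinearity of $\langle \cdot,\cdot\rangle$ and defining $\log f_\gamma := \sum_{k\geq 1} k\, n_{k\gamma}\, T^{k\omega(\gamma)}\, z^{k\partial\gamma}$, this rearranges to $\Phi(z^{\partial\gamma'}) = z^{\partial\gamma'}\, f_\gamma^{\langle \partial\gamma', \partial\gamma\rangle}$, i.e.\ $\Phi = \mathcal{K}_\gamma$. The membership $f_\gamma \in 1 + T^{\omega(\gamma)} z^{\partial\gamma}\,\mathbb{R}[[T^{\omega(\gamma)} z^{\partial\gamma}]]$ is immediate because the $k=1$ term contributes area $\omega(\gamma)$ and boundary class $\partial\gamma$, and the OGW counts are real.

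The sign of the exponent is determined by the orientation in which $\phi$ crosses the wall $\{u : L_u \text{ bounds a class-}k\gamma \text{ disc}\}$. As noted in Remark \ref{affine line}, this wall is an affine line along which $\omega(\gamma) = \mathrm{Re}\, Z_\gamma$ is increasing outward and along which $\mathrm{Im}\,Z_\gamma = 0$. Parametrizing a neighborhood by $Z_\gamma$ and observing that the phase $\mathrm{Arg}\, Z_\gamma$ rotates counterclockwise precisely when $\phi$ crosses in the positive sense, the hypothesis $\mathrm{Arg}\, Z_\gamma(u_-) < \mathrm{Arg}\, Z_\gamma(u_+)$ pins down the positive sign of $\langle \gamma',\gamma\rangle$ in the exponent.

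The main obstacle is extracting the exponentiated form above from what is, a priori, a tangled composition of $m_{k,t}$'s involving arbitrarily many insertions of the Maurer--Cartan element $b$ and of disc moduli in classes $\{k\gamma\}_{k\geq 1}$. The key mechanism is that the infinitesimal generator of the pseudo-isotopy is a Hamiltonian vector field on $H^1(L_p, \Lambda_+)$ with Hamiltonian of the form $H_\gamma \sim \sum_{k\geq 1} \frac{n_{k\gamma}}{k}\, T^{k\omega(\gamma)}\, z^{k\partial\gamma}$ relative to the symplectic pairing induced by the cup product on $H^*(L_p)$; integrating this flow multiplicatively yields $\mathcal{K}_\gamma$. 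Establishing this Hamiltonian structure rigorously from the $A_\infty$-pseudo-isotopy axioms, with correct orientations of the Kuranishi structures and the point-constraint interpretation of $n_{k\gamma}$, is the bulk of the work and is where one appeals to the machinery of \cite{L8}.
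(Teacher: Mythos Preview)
The paper does not prove this theorem: it is quoted verbatim as \cite{L8}*{Theorem 6.15} and used as a black box, so there is no ``paper's own proof'' to compare against. Your outline is a reasonable reconstruction of the strategy one expects in \cite{L8}: pull back via Fukaya's trick to a $1$-parameter family of $A_\infty$-structures on $H^*(L_p,\Lambda)$, use the dimension hypothesis to trivialize the Maurer--Cartan space, and then recognize the induced automorphism as the time-$1$ flow of a Hamiltonian built from the disc counts in the classes $k\gamma$.

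Two remarks on the writeup itself. First, the expression $\log \Phi(z^{\partial\gamma'}) = \log z^{\partial\gamma'} + \cdots$ is notationally awkward since $z^{\partial\gamma'}$ is not close to $1$; it would be cleaner to state directly that $\Phi^*(z^{\partial\gamma'}) = z^{\partial\gamma'}\exp\bigl(\langle\partial\gamma',\partial\gamma\rangle \sum_{k\geq 1} k\,n_{k\gamma}\,T^{k\omega(\gamma)}z^{k\partial\gamma}\bigr)$, or equivalently to work on the Lie algebra side with the infinitesimal generator from the outset. Second, you are right that the genuine content --- showing that the pseudo-isotopy really integrates to a symplectomorphism of this specific Kontsevich--Soibelman form, with the correct orientation conventions on the Kuranishi structures fixing the sign of the exponent --- is where all the analysis lives, and you correctly defer this to \cite{L8} rather than claiming it follows formally. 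As a proof \emph{proposal} this is honest and on the right track; as a self-contained proof it is of course incomplete, but no more so than the paper itself, which simply imports the result.
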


The coefficients of $\log{f_{\gamma}}$ have enumerative meanings: counting Maslov index zero discs bounded by the $1$-parameter family of Lagrangians \cite{L14} or counting rational curves with certain tangency conditions \cite{GPS}. This motivates the following definition. More precisely, we define the open Gromov-Witten invariants $\tilde{\Omega}(\gamma;u)$ below:
\begin{definition} \cite{L8}\label{thm:wallfun} With the notation as in Theorem \ref{thm:wallcrossing} and 
 $u\in B_0$ the intersection of the image of $\phi$ and $l_{\gamma}$, the open Gromov-Witten invariants $\tilde{\Omega}(\gamma;u)$ are defined via the formula 
  \begin{align*}
     \log{f_{\gamma}}=\sum_{d\in\mathbb{N}} d\tilde{\Omega}(d\gamma;u)(T^{\omega(\gamma)}z^{\partial \gamma})^d.
  \end{align*} For other choices of $(u,\gamma)$, we set $\tilde{\Omega}(\gamma;u)=0$. In other words, $\tilde{\Omega}(\gamma;u)\neq 0$ only if $\mathcal{M}_{\gamma}((X,J),L_u)\neq \emptyset$ from \eqref{two moduli}. 
\end{definition} 
Then BPS rays are defined to have the support equal to the loci with non-trivial open Gromov-Witten invariants of the same homology classes (up to parallel transport).
\begin{definition}
	Given $u\in B_0$ and a relative class $\gamma\in H_2(X,L_u)$, then the associated BPS ray is defined (locally) to be  
	  \begin{align*}
	      l_{\gamma}:=\{u'\in B_0 \ | \
	      \tilde{\Omega}(\gamma;u')\neq 0 \mbox{ and $Z_{\gamma}(u')\in \mathbb{R}_+$}\}.
	  \end{align*}
\end{definition}

Recall that the geometry $X=X_*$ is not compact and a priori the moduli spaces of $J$-holomorphic discs may not be compact. However, from the curvature decay and injectivity radius decay in \cite[Theorem 1.5]{H} and the qualitative version of Gromov compactness theorem (see for instance \cite[Proposition 5.3]{CJL} or \cite{Gro}), the moduli spaces of discs in $X$ with compact Lagrangian boundary conditions are compact. 
To compute the open Gromov-Witten invariants on $X$, we first recall the following fact:
given a rational elliptic surface $Y'$ and a fibre $D'$, there exists a $1$-parameter deformation $(Y_t,D_t)$ such that $D'_t\cong D'$ and $Y'_t$ are rational elliptic surfaces with only type $I_1$ singular fibres except $D'_t$. 
The following theorem explains how to compute the local open Gromov-Witten invariants  near a general singular fibre other than those of type $I_1$. We will denote by $X_t$ the hyperK\"ahler rotation of $Y'_t\backslash D'_t$ with relation similar to (\ref{HK}). Then let $X_t\rightarrow B_t$ be a $1$-parameter family of hyperK\"ahler surfaces with special Lagrangian fibration and $X_0=X$. We will identify $B_t\cong B_0=B$ topologically. 

\begin{thm} \label{399} \cite{L12}*{Theorem 4.3}
Given any $u\in B_0$, $\gamma\in H_2(X,L_u)$, there exists $t_0$ and a neighborhood $\mathcal{U}\subseteq B_0$ of $u$ such that 
	\begin{enumerate}
	    \item If $\tilde{\Omega}(\gamma;u)=0$, then $\tilde{\Omega}(\gamma;u')=0$ for $u'\in \mathcal{U}$. 
	    \item If $\tilde{\Omega}(\gamma;u)\neq 0$, then $l^t_{\gamma}\cap \mathcal{U}\neq \emptyset$ and  \begin{align*}
	\tilde{\Omega}_t(\gamma;u')=\tilde{\Omega}(\gamma;u),
	\end{align*} 
	for $u'\in l^{t}_{\gamma}\cap \mathcal{U}$ and $t$ with $|t|<t_0$.
	\end{enumerate} 
 Here $\tilde{\Omega}_t(\gamma;u)$ denotes the open Gromov-Witten invariant of $X_t$. 
\end{thm}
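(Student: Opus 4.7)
The plan is to establish that $\tilde\Omega(\gamma;u)$ is locally constant in the 2-parameter family $(t,u')$ by combining a uniform Gromov compactness argument with Fukaya's trick. The ingredients needed are already available in the preceding sections: the qualitative Gromov compactness for the non-compact hyperK\"ahler surfaces $X_t$, and the pseudo-isotopy machinery of Section \ref{sec:fukayatrick}.

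First I would upgrade the qualitative Gromov compactness of \cite{CJL}*{Proposition 5.3} to a uniform version over a neighborhood of $(0,u)$ in parameter space. The symplectic area $\omega_t(\gamma)$ stays bounded as $(t,u')$ varies in a small set, and Hein's curvature and injectivity-radius decay rates for the Ricci-flat metric on $X_t$ persist uniformly in small $t$. Consequently no sequence of discs in $\mathcal{M}_\gamma((X_t,J_t),L_{u'})$ can escape to the infinity of $X_t$, so these moduli spaces are compact uniformly in $(t,u')$ near $(0,u)$.

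Next I would set up the pseudo-isotopy. Picking a reference fibre $L_p$ and applying Fukaya's trick to a 2-parameter family of fibrewise-preserving diffeomorphisms of $X_t$ depending on both $t$ and $u'$ produces a family of tame almost-complex structures $J_{t,u'}$ on $X$ and a pseudo-isotopy of unital filtered $A_\infty$ structures on $H^*(L_p,\Lambda)$. By Lemma \ref{compatibility} the induced automorphism of the Maurer-Cartan space $H^1(L_p,\Lambda_+)$ depends only on the homotopy class of the chosen path in parameter space. Combined with the wall-crossing formula of Theorem \ref{thm:wallcrossing}, this forces the wall functions $f_\gamma$ at the two endpoints to coincide, and by Definition \ref{thm:wallfun} so do their coefficients $\tilde\Omega(\gamma;\cdot)$. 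To locate the deformed BPS ray I would use continuity of the central charge $Z_\gamma(u',t)$: since $u\in l_\gamma$ means $Z_\gamma(u,0)\in\mathbb{R}_+$, for $t_0$ sufficiently small the locus $\{Z_\gamma(\cdot,t)\in\mathbb{R}_+\}$ must intersect $\mathcal{U}$, producing $l^t_\gamma\cap\mathcal{U}$. Part (1) then follows from the same argument applied to the trivial wall function.

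The main obstacle is controlling bubbling in the uniform compactness step. A priori a limit configuration could split as $\gamma=\gamma'+\gamma''$ with both pieces stable, and one must rule out new components picking up relative classes whose central charges have phase approaching that of $\gamma$. This is handled by shrinking $\mathcal{U}$ and $t_0$ so that for every decomposition into effective relative classes, the partial central charges $Z_{\gamma'}$ stay uniformly bounded away from the ray $\mathbb{R}_{\geq 0}\cdot Z_\gamma(u,0)$ off the appropriate BPS loci, using discreteness of the charge lattice and openness of the non-alignment condition on $Z_\gamma$.
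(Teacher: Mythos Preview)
The paper does not prove this theorem; it is imported verbatim from \cite{L12}*{Theorem 4.3} and stated without argument. There is therefore no proof in the present paper against which to compare your proposal.

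That said, your outline is the expected strategy and is consistent with the tools set up in Section~\ref{sec:fukayatrick}: uniform Gromov compactness over the $(t,u')$-family to confine discs, Fukaya's trick to produce a pseudo-isotopy and hence invariance of the wall function $f_\gamma$, and continuity of $Z_\gamma$ to locate $l_\gamma^t$ near $u$. One point to tighten: in your treatment of Part~(1) you should be explicit that the statement concerns only $X=X_0$ (no $t$-deformation), so it reduces directly to Gromov compactness on $X$---if $\mathcal{M}_\gamma((X,J),L_u)=\emptyset$ then the same holds for nearby $u'$, hence $\tilde\Omega(\gamma;u')=0$ by Definition~\ref{thm:wallfun}. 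Your phrasing ``the same argument applied to the trivial wall function'' obscures that no pseudo-isotopy is needed there.
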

For instance, in the case where the singular configuration of $Y'$ is  $II^*II$, then the BPS rays of $X_t$ would look like the following picture with the notation defined in Section \ref{section: dp5}:
\begin{figure}[H]
\centering
\begin{tikzpicture}
\draw
(0,0) -- (1,1) node[right] {$\gamma_2$}
(0,0) -- (0,1.7) node[left] {$\gamma_1 + \gamma_2$}
(0,0) -- (-1,1) node[left] {$ \gamma_1$}
(0,0) -- (-1,-1) node[left] {$-\gamma_2$}
(0,0) -- (1,-1) node[right] {$-\gamma_1$};
\filldraw(-.3, -.3) circle(1pt);
\draw [snake=snake,
segment amplitude=.4mm,
segment length=2mm,
line after snake=1mm] (-.3, -.3)--(-.33, -1.3);
\filldraw(.3, -.3) circle(1pt);
\draw [snake=snake,
segment amplitude=.4mm,
segment length=2mm,
line after snake=1mm] (.3, -.3)--(.33, -1.3);
\end{tikzpicture}
\caption{BPS rays on $B_t$ for the case discussed in Section \ref{section: dp5} } \label{fig:sgenA2}
\end{figure}
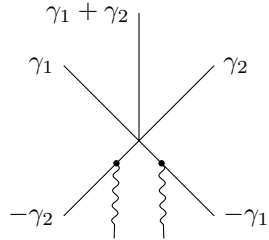

\subsection{Location of BPS Rays}\label{section: BPS rays}
In this section, we will restrict to the case where $Y'$ has exactly two singular fibres at $0,\infty$ and the monodromy of each singular fibre is of finite order. The examples listed in Section \ref{section: setup}  are exactly those possible $Y'$ except the one with singular configuration $I_0^*I_0^*$. We will show that the BPS rays divide the base into chambers which are in one-to-one correspondence with the torus charts of the family Floer mirror later.  
In particular, the following observation simplifies the explicit computation of family Floer mirror. 

\begin{lem} \label{514} Let $\gamma$ be one of the relative classes in Theorem \ref{913}.
   Then $l_{\gamma_i}$ does not intersect each other.
 Specifically, $B_0$ is divided into chambers by $l_{\gamma_i}$. 
\end{lem}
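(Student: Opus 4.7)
The plan is to exploit the strong finite-order monodromy hypothesis at the two singular fibres of $Y'$ to make the central charges $Z_{\gamma_i}$ completely explicit, and then read off the location of the BPS rays directly from them.

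Step 1: identify $B_0 \cong \mathbb{C}^*_u$ via the elliptic fibration on $Y'$ before hyperK\"ahler rotation. Because the only singularities of the period functions are at $0$ and $\infty$, and both carry finite-order monodromy, a Frobenius/monodromy analysis of the Picard--Fuchs equation forces each $Z_{\gamma_i}$ to be a monomial $Z_{\gamma_i}(u) = c_i u^{\alpha_i}$, with $\alpha_i \in \mathbb{Q}$ determined by the local monodromy weight and $c_i \in \mathbb{C}^*$. These data are explicit in each of the $II^*II$, $III^*III$, $IV^*IV$ cases via the Weierstrass equations written in Section \ref{section: setup}.

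Step 2: by Remark \ref{affine line}, $l_{\gamma_i} = \{u \in B_0 : Z_{\gamma_i}(u) \in \mathbb{R}_+\}$. From Step 1 this is a union of finitely many straight rays emanating from the puncture $u=0$, located at the angles where $\arg(c_i u^{\alpha_i}) = 0$. Remark \ref{rk:affine} identifies primitive relative classes with primitive lattice vectors in the affine tangent bundle; since the $\gamma_i$ of Theorem \ref{913} are pairwise non-parallel primitive classes, the corresponding monomial arguments $\arg Z_{\gamma_i}$ are pairwise distinct at any given $u$. Hence the rays $l_{\gamma_i}$ occupy pairwise distinct angular positions.

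Step 3: straight rays from a common origin with distinct angular directions meet only at that origin, which lies outside $B_0$. Consequently $l_{\gamma_i} \cap l_{\gamma_j} = \emptyset$ for $i \neq j$, and $B_0 \setminus \bigcup_i l_{\gamma_i}$ is a finite disjoint union of open angular sectors --- precisely the chambers referred to in the statement, as depicted in Figure \ref{fig:sgenA2}.

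The main obstacle is Step 1: securing the monomial form of each $Z_{\gamma_i}$. This is exactly where the restriction to the three finite-order configurations is essential; with an infinite-order monodromy singular fibre one would pick up logarithmic terms in the periods, and the BPS rays could spiral and a priori intersect one another. Once the monomial form is established, Steps 2 and 3 reduce to a combinatorial check of the angular positions, which can be carried out uniformly in the three cases.
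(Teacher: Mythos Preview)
Your strategy---establish that each central charge is a pure monomial $Z_{\gamma_i}(u)=c_i u^{\alpha}$, deduce that each $l_{\gamma_i}$ is a single radial ray from the puncture, and then check the angles are distinct---is exactly the mechanism the paper uses, and the conclusion $Z_{\gamma}(u^k)=cu$ (equation \eqref{central charge}) is precisely what the paper extracts before declaring that ``the lemma follows.''  So the overall shape of your argument is correct.

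There are, however, two points where your justification is thinner than what is actually needed, and where the paper proceeds differently.

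\textbf{Step 1.}  Frobenius analysis of the Picard--Fuchs equation with regular singular points only at $0$ and $\infty$ yields a two--dimensional solution space spanned by $u^{\alpha_1}$ and $u^{\alpha_2}$ (the two local exponents).  A priori, for an integral class $\gamma$ one gets $Z_\gamma=A_\gamma u^{\alpha_1}+B_\gamma u^{\alpha_2}$, and then $l_\gamma$ is \emph{not} a straight radial ray.  The vanishing of one coefficient is a genuine extra input: it comes from the quasi--homogeneity of the explicit Weierstrass equations (the families are isotrivial, so the fibre $1$-form periods all scale by the \emph{same} power of $u$).  You allude to the explicit equations, but your sentence ``a Frobenius/monodromy analysis \ldots\ forces each $Z_{\gamma_i}$ to be a monomial'' overstates what Picard--Fuchs alone gives.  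The paper takes a different route here: rather than appealing to the explicit equations, it argues geometrically that $Z_\gamma$ has no critical points (from $\Omega$ being holomorphic symplectic), that $|Z_\gamma|\to\infty$ along every phase curve (using the asymptotic metric model of \cite{H} and \cite{BPV}), and hence that the single--valued function $Z_\gamma(u^k)$ extends to a holomorphic map $\mathbb{P}^1\to\mathbb{P}^1$ fixing $0$ and $\infty$, forcing the monomial form.  Your approach via the explicit equations is more elementary once stated correctly; the paper's is more intrinsic and does not require writing down the Weierstrass models.

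\textbf{Step 2.}  ``Pairwise non-parallel primitive classes'' does not by itself imply distinct $\arg c_i$: two $\mathbb{R}$-independent lattice vectors could map under a real-linear map $\gamma\mapsto c_\gamma\in\mathbb{C}$ to complex numbers with the same argument.  What you need is that this $\mathbb{R}$-linear map is injective, i.e.\ that the two basis coefficients $c_{\gamma_1},c_{\gamma_2}$ are $\mathbb{R}$-linearly independent in $\mathbb{C}$.  This holds because they are (up to a common factor) the two periods of the holomorphic $1$-form on the smooth elliptic fibre, and those are $\mathbb{R}$-independent precisely because the fibre is a nondegenerate torus.  Once you insert this, your Step 2 is fine; but as written the inference is unjustified.
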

\begin{proof}
    Let $v\in TB$ and 
	recall that one has
	$vZ_{\gamma}=\int_{\partial \gamma}\iota_{\tilde{v}}\Omega$, where $\tilde{v}$ is a lifting of $v$, by direct computation. Together with $\Omega$ being holomorphic symplectic, $Z_{\gamma}$ has no critical point in $B_0$.
	Let $l_{\gamma}$ be a BPS ray, then by definition the holomorphic function $Z_{\gamma}$ has phase $0$ along $l_{\gamma}$. Now take $v$ to be the tangent of $l_{\gamma}$ at $u\in l_{\gamma}$ pointing away from the origin. 
	Then we have 
  	  $vZ_{\gamma}(u)\neq 0$, since otherwise, $u$ is a critical point of $Z_{\gamma}$ and contradicts to the fact that $\Omega$ is holomorphic symplectic. 
In other words, the function $|Z_{\gamma}|$ is strictly increasing along $l_{\gamma}$.

Next we claim that
  $l_{\gamma}$ can not be contained in a compact set. Otherwise, there exists a sequence of points $u_i\in l_{\gamma}$ converging to some point $u_{\infty}\in B_0$. 
Since the order of the monodromy is finite, there are only finitely possibly relative classes among $\gamma_{u_i}$ with respect to the trivialization of the local system $H_2(X,L_u)$ in a small neighborhood of $u_{\infty}$. Here $\gamma_{u_i}$ is the parallel transport of $\gamma$ to $u_i$. After passing to a subsequence, one has $\lim_{i\rightarrow \infty}Z_{\gamma}(u_i)=Z_{\gamma}(u_{\infty})$. If $u_{\infty}\in B_0$, then $l_{\gamma}$ can be extended over $u_{\infty}$ and leads to a contradiction. Therefore, $l_{\gamma}$ connects $0$ and $\infty$. 
Recall the asymptotic geometry of $X'$ from \cite[p.208]{BPV}: consider 
the model space $X_{mod}$ defined by 
    \begin{align*}
        X_{mod}:=\{(u,v)\in \mathbb{C}\times \mathbb{C}||u|>R, \mbox{Arg}(u)\in [0,2\pi \beta] \}/\sim 
    \end{align*} where the equivalence relation is 
    \begin{align*}
       (u,v)&\sim (e^{2\pi i\beta}u, e^{-2\pi i\beta}v), \mbox{ for } u\in \mathbb{R}_+ \\ (u,v)&\sim (u,v+m+n\tau), \mbox{ for }m,n \in \mathbb{Z}.
    \end{align*} Here $\beta,\tau$ are suitable constants depending on the type of the fibre at infinity \cite[p.208, Table 5]{BPV}.
Then there exists a compact set $K\subseteq X'$, a diffeomorphism $\Psi: X'\setminus K\rightarrow X_{mod}$ and a holomorphic volume form $\Omega'$ on $X'$ such that
  \begin{align*}
      (\Psi^{-1})^*\Omega'=du\wedge dv+O(|u|^{-1}).
  \end{align*} Here $O(|u|^{-1})$ means differential forms whose components in the $(u,v)$-coordinates are of order $O(|u|^{-1})$. 
Then straight-forward calculations shows that $|Z_{\gamma}|\nearrow \infty$ along $l_{\gamma}$.

Notice that the above argument holds for $l_{\gamma}^{\theta}$, where $l_{\gamma}^{\theta}$ is the loci where $Z_{\gamma}$ has phase $\theta\in S^1$. This implies that
$|Z_{\gamma}(u)|\rightarrow \infty$ as $u\rightarrow \infty$. Recall that
  $Z_{\gamma}(u)$ is a multi-valued holomorphic function on $B_0\cong \mathbb{C}^*$. Since $\pi_1(B_0)\cong \mathbb{Z}$ and the monodromy is of order $k$, we have $Z_{\gamma}(u^k)$ is a well-defined 
 holomorphic function $\mathbb{C}^*\rightarrow \mathbb{C}^*$. By straight-forward calculation one has $\lim_{u\rightarrow 0}Z_{\gamma}(u^k)=0$ and thus $u=0$ is a removable singularity. The previous discussion implies that $\infty$ is a pole and  the holomorphic function $Z_{\gamma}(u^k)$ extends to $\mathbb{P}^1\rightarrow \mathbb{P}^1$ and fixing $0,\infty$. Thus, we reach that
  \begin{align} \label{central charge}
      Z_{\gamma}(u^k)=cu,
  \end{align}
  for some constant $c\in \mathbb{C}^*$ and the lemma follows. 
 \end{proof}

\begin{rmk} \label{analytic difficulty}
  Let $Y$ be the del Pezzo surface of degree five and $D$ be an anti-canonical divisor consists of a wheel of five rational curves. Set $X=Y\setminus D$. It is known that $X$ is the moduli space of flat connections on punctured sphere \cite{Bo}. There exists a hyperKähler metric on it such that suitable hyperK\"ahler rotation becomes some meromorphic Hitchin moduli space, which is $X'$, the complement of the $II^*$ fibre of the rational elliptic surface $Y'$ with singular configuration $II^*II$. It is not clear if the holomorphic volume form $\Omega'$ on $X'$ extends as a meromorphic form with a simple pole along the $II^*$ fibre. However, the Hitchin metric is exponentially asymptotic to some semi-flat metric at infinity \cite{CL}, the proof of Lemma \ref{514} also applies to this case. 
   
\end{rmk}

\subsection{Construction of the Family Floer Mirror} \label{sec:construction}

We will briefly recall the construction of family Floer mirror constructed by Tu \cite{T4} in this section. 
We will refer the details of the analytic geometry to \cite{C}. 

\begin{definition} Let $U\subseteq B_0$ be an open set and $\psi:U\rightarrow \mathbb{R}^n$ be the affine coordinate such that $\psi(u)=0$ for some $u\in U$. Then we say $U$ is a rational domain if $U\cong\psi(U)=P\subseteq \mathbb{R}^n$ is a  rational convex polytope. The Tate algebra $T_U$ associated to a rational domain $U$ consists of the power series of the form 
	   \begin{align*}
	      \sum_{\mathbf{k}\in \mathbb{Z}^n} a_{\mathbf{k}}z_1^{k_1}\cdots z_n^{k_n},
	   \end{align*} where $\mathbf{k}=(k_1,\cdots,k_n)$ with the following conditions:
	   \begin{enumerate}
	   	\item $a_{\mathbf{k}}\in \Lambda$ the Novikov field and 
	   	\item (convergence in $T$-adic topology)
	   	  \begin{align}\label{convergence}
	   	 \lim_{\mathbf{k}\rightarrow \infty}\mbox{val}(a_{\mathbf{k}})+\langle\mathbf{k}, \mathbf{x}\rangle\rightarrow \infty ,
	   	 \end{align} as $\mathbf{k}\rightarrow \infty$,
	   	 for each $\mathbf{x}=(x_1,\cdots, x_n)\in U$. 
	   \end{enumerate}   
 For such rational domain $U$, we denote by $\mathcal{U}$ the maximum spectrum of the associated Tate algebra $T_U$.
\end{definition}
\begin{remark} \label{chart}
   Recall that the Novikov field $\Lambda$ is algebraically closed. From Proposition 3.1.8 (c) \cite{EKL}, $\mathcal{U}$ is identified with $\mbox{Val}^{-1}(\psi(U))$, where $\mbox{Val}:(\Lambda^*)^n\rightarrow \mathbb{R}^n$ is component-wise the valuation on $\Lambda^*$. If $f\in T_U$, then $f(x)$ converges for all points $x\in \mathcal{U}$ and thus we may view $f$ as a function defined on $\mbox{Val}^{-1}(\psi(U))$. 
\end{remark}

Take an open cover of rational domains $\{U_i\}_{i\in I}$ of $B_0$ with affine coordinates $\psi_i:U_i\rightarrow \mathbb{R}^n$ such that $\psi_i(u_i)=0\in \mathbb{R}^n$ for some $u_i\in U_i$. For each pair $i,j$ with $U_i\cap U_j\neq \emptyset$, there is a natural gluing data 
    \begin{align*}
       \Psi_{ij}: \mathcal{U}_i\rightarrow \mathcal{U}_j, 
    \end{align*} which now we will explain below:  Choose $p\in U_i\cap U_j$ and $f_{u_i,p}$ fibrewise preserving diffeomorphism sending $L_{u_i}$ to $L_p$ and is identity outside $U_i$. Let $(x^i_1,\cdots, x^i_n)$ (and $(x^j_1,\cdots, x^j_n)$) be the local symplectic affine coordinates on $U_i$ (and $U_j$ respectively) with respect to the same set of basis up to parallel transport within $U_i,U_j$. 
    The corresponding functions in $T_{U_i}$ are denoted by $(z^i_1,\cdots, z^i_n)$, where $\mbox{val}(z^i_k)=x^i_k$.
   
The difference of symplectic affine coordinates is
  \begin{align*}    \int_{{f_{u_i,p}}_*\gamma}\omega-\int_{\gamma}\omega =\left\langle \sum_{k=1}^n \big( x_k(p)-x_k(u_i)\big) e^i_k,\partial \gamma \right\rangle. 
      \end{align*}
Denote $T_{U_{i,p}}$ the Tate algebra satisfying the convergence in $T$-adic topology \eqref{convergence} on the rational domain $\psi_i(U_i)-\psi_i(p)$, the translation of $\psi(U_i)$, and we denote by $\mathcal{U}_{i,p}$ the spectrum of $T_{U_{i,p}}$.  Then
    there is the transition map 
    \begin{align}
      S_{u_i,p}: \mathcal{U}_i&\rightarrow \mathcal{U}_{i,p}\notag \\
       z^i_k&\mapsto T^{x_k(u_i)-x_k(p)}z^i_k.
    \end{align} Then define the gluing data $\Psi_{ij}$ be the composition
 \begin{align} \label{2}
           \Psi_{ij}: \mathcal{U}_{ij}\xrightarrow{S_{u_i,p}} \mathcal{U}_{ij,p}\xrightarrow{\Phi_{ij}} \mathcal{U}_{ji,p} \xrightarrow{S^{-1}_{u_j,p}}  \mathcal{U}_{ji},
      \end{align} where $\Phi_{ij}$ is defined in \eqref{wc}.
  The gluing data $\Psi_{ij}$ satisfies (Section 4.9 \cite{T4})
  \begin{enumerate}
  	\item Independent of the choice of reference point $p\in U_i\cap U_j$. 
  	\item $\Psi_{ij}=\Psi_{ji}$ and $\Psi_{ij}\Psi_{jk}=\Psi_{ik}$. 
  	\item For $p\in U_i \cap U_j \cap U_k$, we have $\Psi_{ij}(\mathcal{U}_{ij}\cap \mathcal{U}_{ik})\subseteq \mathcal{U}_{ji}\cap \mathcal{U}_{jk}$. 
  \end{enumerate}
Then the family Floer mirror $\check{X}$ is defined to be the gluing of the affinoid domains 
    \begin{align} \label{SG mirror}
        \check{X}:=\bigcup_{i\in I}\mathcal{U}_i/\sim,
    \end{align}
    where $\sim$ is defined by (\ref{2}). The natural projection map $T_U\rightarrow U$ from the valuation glue together and gives the family Floer mirror a projection map 
    \begin{align*}
       \mathfrak{Trop}: \check{X}\rightarrow B_0. 
    \end{align*}
The following example is straight forward from the construction.
\begin{ex} \label{no discs}
Recall that the rigid analytic torus $(\mathbb{G}_m^{an})^2$ admits a valuation map $\mathfrak{Trop}:(\mathbb{G}_m^{an})^2\rightarrow \mathbb{R}^2$. Let $\pi:X\rightarrow U$ be a Lagrangian fibration such that for any path $\phi$ connecting $u_i,u_j\in U$, the corresponding $F_{\phi}=id$. Assume that the symplectic affine coordinates give an embedding $U\rightarrow \mathbb{R}^2$ and we will simply identify $U$ with its image. Then the gluing \begin{align*}
  \Psi_{ij}:\mathcal{U}_{ij}&\rightarrow \mathcal{U}_{ji}\\
   z_k^i &\mapsto  T^{x_k(u_i)-x_k(u_j)} z_k^{j},
\end{align*}
 is simply translation from equation \eqref{2}. 
Thus the corresponding family mirror $\check{X}$ is simply $\mathfrak{Trop}^{-1}(U)\rightarrow U$. In particular, when $U\cong \mathbb{R}^2$, then the family Floer mirror is simply the rigid analytic torus $ (\mathbb{G}_m^{an})^2$ . It worth noticing that if $U\subseteq \mathbb{R}^2$ is a proper subset, then $\mathfrak{Trop}^{-1}(U)$ is not a dense subset of $(\mathbb{G}_m^{an})^2$. 
\end{ex}
\begin{remark}
   It is pointed out by H. Yuan \cite[Remark 4.2]{Y3} that for the family Floer mirror above to equip with the correct rigid analytic topology, one needs to use a generalized version of Groman-Solomon's reversed isoperimetric inequality \cite[Theorem 1.1]{GS4}\cite[Appendix B]{Y} to ensure the convergence of the analytic gluing.

\end{remark}


\section{Family Floer Mirror of $X_{II}$} \label{section: dp5}
In this section, we will have a detailed computation of the family Floer mirror of $X=X_{II}$ (see Section \ref{section: setup}. We sketch the proof below:
We will first identify the locus of special Lagrangian fibres bounding holomorphic discs to be simply five rays $l_{\gamma_i}$ connecting $0,\infty$.
Then we compute their corresponding wall-crossing transformations which are analytification of some birational maps. Thus, the family Floer mirror $\check{X}$ can be glued from five charts. Then we will prove that the embedding of each of the five charts into $\check{X}$ can be extended to an embedding of the analytic torus $\mathbb{G}_m^{an}$ into $\check{X}$. In other words, $\check{X}$ is the gluing of five analytic tori. 
On the other hand, consider the del Pezzo surface $Y$ of degree $5$ and let $D$ be the cycle of five rational curves. Let $B_{\GHK}$ be the affine manifold with the singularity constructed in Section \ref{RES}. We identify the complex affine structure on $B$ with the one on $B_{\GHK}$, the rays and the corresponding wall-crossing transformations. Then from \cite{GHK}*{Example 3.7}, we know that $\check{X}$ is the analytification of the del Pezzo surface of degree five.  
Furthermore, we would choose the branch cuts on $B$ in a different way. This would induce another realization of $\check{X}$ as the gluing of five tori but with different gluing morphisms, which we will later identify as the $\cX$-cluster variety of type $A_2$. 

As discussed in Section \ref{section: setup}, the base $B$ with the complex structure from the elliptic fibration $Y'_{II}\rightarrow \mathbb{P}^1$ is biholomorphic to $\mathbb{C}$ as a subset of $\mathbb{P}^1$. We may choose a holomorphic coordinate $u$ on $B$ such that the fibres over $u=0,\infty$ are the type $II,II^*$ singular fibres. Such coordinate is unique up to scaling $\mathbb{C}^*$. We will determine such scaling later. 
First we apply Theorem \ref{399} to the $1$-parameter family of hyperK\"ahler rotation of the rational elliptic surfaces described in Section \ref{section: setup}, one get 
\begin{thm} 
\cite{L12}*{Theorem 4.11} \label{local discs} Choose a branch cut from the singularity to infinity and a basis $\{\gamma_1',\gamma_2'\}$ of  $H_2(X,L_u)\cong \mathbb{Z}^2$ such that $\langle \gone,\gtwo\rangle=1$ and the counter-clockwise monodromy $M$ around the singularity is 
\begin{align} \label{monodromy II}
    \gone &\mapsto -\gtwo \notag\\
    \gtwo &\mapsto \gone+\gtwo.
\end{align}
 Then for $|u|\ll 1$, one has
  \begin{align*}
      \tilde{\Omega}(\gamma;u)=\begin{cases}
        \frac{(-1)^{d-1}}{d^2}, & \mbox{if } \gamma=\pm d\gone,\pm d\gtwo,\pm d(\gone+\gtwo), \mbox{ for }d\in \mathbb{N}  \\
        0, &\mbox{otherwise.}
      \end{cases}
  \end{align*}
\end{thm}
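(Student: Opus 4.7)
The plan is to invoke Theorem~\ref{399} to replace $X$ with a small deformation $X_t$ in which the $II$ singular fibre splits into two $I_1$ singular fibres, and then to propagate the invariants into the region $|u|\ll 1$ by the Kontsevich--Soibelman pentagon identity.

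\textbf{Deformation and basic invariants.} A generic $1$-parameter deformation of $Y'_{II}$ fixing the fibre at infinity splits the $II$ fibre into two $I_1$ singular fibres (Milnor number $2$), each contributing a vanishing cycle. Denote the resulting primitive relative classes by $\gone$ and $\gtwo$ after parallel transport to a common base point. A direct Picard--Lefschetz computation shows $\langle\gone,\gtwo\rangle=1$ and identifies the composition of the two $I_1$ monodromies with the order-$6$ transformation~\eqref{monodromy II}. Near each $I_1$ fibre of $X_t$, the local geometry is the standard Ooguri--Vafa/focus--focus model: only multiples of the corresponding vanishing cycle bound holomorphic discs, and the wall function equals $f_{\gi}=1+T^{\omega(\gi)}z^{\partial\gi}$. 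Expanding $\log f_{\gi}$ and comparing with Definition~\ref{thm:wallfun} yields $\tilde\Omega_t(d\gi;u)=(-1)^{d-1}/d^2$ whenever $u\in l_{\gi}^t$.

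\textbf{Pentagon wall-crossing.} As the reference point moves from the exterior region into the small punctured neighbourhood of the coalesced $II$ fibre, the rays $l_{\gone}^t$ and $l_{\gtwo}^t$ collide. By Lemma~\ref{compatibility}, the composite pseudo-isotopy around a loop enclosing both $I_1$'s must equal the symplectic monodromy $M$ from the preceding paragraph. Writing this holonomy in both possible orderings as an ordered product of simple wall-crossings $\mathcal{K}_\gamma$ from Theorem~\ref{thm:wallcrossing} yields the Kontsevich--Soibelman pentagon identity
\[
\mathcal{K}_{\gone}\,\mathcal{K}_{\gtwo} \;=\; \mathcal{K}_{\gtwo}\,\mathcal{K}_{\gone+\gtwo}\,\mathcal{K}_{\gone},
\]
and uniqueness of the factorization into simple wall-crossings supported on rays forces a new BPS ray in class $\gone+\gtwo$ with wall function $1+z^{\partial(\gone+\gtwo)}$, whence $\tilde\Omega_t(d(\gone+\gtwo);u)=(-1)^{d-1}/d^2$. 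The same analysis on the opposite side of the origin (applied to the anti-BPS rays) produces the three rays in classes $-\gone,\,-\gtwo,\,-(\gone+\gtwo)$. Theorem~\ref{399} then transports the invariants from $X_t$ back to $X$: on every BPS ray the values of $\tilde\Omega_t$ and $\tilde\Omega$ agree, and vanishing of $\tilde\Omega_t$ in a neighbourhood for any other class implies the corresponding vanishing of $\tilde\Omega$.

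The main obstacle is ruling out additional BPS rays in $X_t$ beyond those predicted by the pentagon identity. This reduces to the uniqueness of the simple-factor factorization of $M$ in Kontsevich--Soibelman theory, combined with compactness of the relevant moduli of holomorphic discs in the non-compact surface $X_t$, which follows from the asymptotic metric estimates of \cite{H} already invoked in the proof of Lemma~\ref{514}.
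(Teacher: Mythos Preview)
The paper does not give its own proof of this theorem; it is quoted from \cite{L12}, and the sentence immediately preceding the statement indicates only that it is obtained by applying Theorem~\ref{399} to the $1$-parameter family described in Section~\ref{section: setup}. Your outline follows precisely this route---deform so that the type $II$ fibre splits into two $I_1$ fibres, read off the Ooguri--Vafa wall functions at each $I_1$, feed the resulting rays into the pentagon identity, and transport back via Theorem~\ref{399}---and is the correct strategy.

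There is, however, a genuine slip in your second paragraph. You assert that ``the same analysis on the opposite side of the origin (applied to the anti-BPS rays) produces the three rays in classes $-\gone,-\gtwo,-(\gone+\gtwo)$.'' In $X_t$ this is false: the rays $l_{-\gone}$ and $l_{-\gtwo}$ emanate from the two distinct $I_1$ points in directions pointing \emph{away} from the common scattering vertex, so they never meet and there is no second pentagon (cf.\ Figure~\ref{fig:sgenA2}, where only five rays are drawn). The reason the class $-(\gone+\gtwo)$ nevertheless appears in the statement is monodromy, not scattering: $M$ acts transitively on the six primitive classes $\pm\gone,\pm\gtwo,\pm(\gone+\gtwo)$ (Remark~\ref{transitive}), and after crossing the branch cut the would-be sixth ray coincides with $l_{\gamma_1}$ (Remark~\ref{five BPS rays}). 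Replace the ``opposite-side pentagon'' step by this monodromy observation and your argument goes through.

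A smaller remark: your final paragraph on ruling out extra rays appeals to ``uniqueness of the simple-factor factorization of $M$'' together with compactness. The cleaner phrasing is that in $X_t$ every BPS ray must trace back along an attractor flow to one of the two $I_1$ points (this is the mechanism used in Corollary~\ref{split attractor flow}), where only multiples of the vanishing cycles occur; the sole ray-collision is the pentagon vertex, so no further classes can be produced.
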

\begin{remark} \label{multiple cover}
    See Remark 2.10 \cite{L12} for the relation between $\tilde{\Omega}(\gamma;u)$ and ${\Omega}(\gamma;u)$.
\end{remark}
\begin{remark} \label{transitive}
    The monodromy \eqref{monodromy II} acts transitively on the set $\{\pm \gone,\pm \gtwo, \pm (\gone+\gtwo)\}$. 
\end{remark}
\begin{remark} \label{commutative}
    If $A\in GL(2,\mathbb{Z})$ such that $A\begin{pmatrix}0 & -1 \\ 1 & 1 \end{pmatrix}=\begin{pmatrix}0 & -1 \\ 1 & 1 \end{pmatrix}A$, then $A=\begin{pmatrix}0 & -1 \\ 1 & 1 \end{pmatrix}^k$ for some $k\in \mathbb{N}$. Therefore, if $\gamma_1,\gamma_2\in H_2(X,L_u)$ such that the monodromy is the form of \eqref{monodromy II}, then $\gamma_1=M^k\gone,\gamma_2=M^k\gtwo$ for some $k\in \mathbb{N}$. 
\end{remark}
Furthermore, we will next show that these are the only possible relative classes with non-trivial open Gromov-Witten invariants even {\bf globally} in $X_{II}$. 
\begin{cor} \label{split attractor flow}
  If $\tilde{\Omega}(\gamma;u)\neq 0$, then $\gamma$ is one of the relative classes in Theorem \ref{local discs}, $u\in l_{\gamma}$ and $\tilde{\Omega}(\gamma;u)=\frac{(-1)^{d-1}}{d^2}$, where $d$ is the divisibility of $\gamma$. 
\end{cor}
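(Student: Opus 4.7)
The plan is to apply the split attractor flow mechanism to bootstrap Theorem \ref{local discs} from a neighborhood of the origin to the whole of $B_0$. Suppose $\tilde{\Omega}(\gamma; u_0) \neq 0$ for some non-zero $\gamma \in H_2(X, L_{u_0})$; by definition this forces $u_0 \in l_\gamma$, so it suffices to identify $\gamma$ with one of the listed classes and compute $\tilde{\Omega}(\gamma; u_0)$. The key observation is that the argument in the proof of Lemma \ref{514}---that $Z_\gamma$ has no critical point because $\Omega$ is holomorphic symplectic, together with $|Z_\gamma| \nearrow \infty$ from the asymptotics of $X$ at infinity recalled there---uses nothing specific about $\gamma$ being a listed class. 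Hence it applies verbatim to any non-zero $\gamma$ and shows that $l_\gamma$ is unbounded. Following $l_\gamma$ in the direction where $|Z_\gamma|$ decreases, it either enters the small neighborhood $\{|u|\ll 1\}$ of Theorem \ref{local discs}, or it terminates at some interior point $u' \in B_0$.

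I would then argue by induction on the symplectic area $\omega(\gamma)$ (which is bounded below at fixed class by positivity and takes only discretely many values on classes with $\tilde{\Omega}\neq 0$ by Gromov compactness on $X$) that every class with non-trivial $\tilde{\Omega}$ is one of the listed ones. In the inductive step, a BPS ray can only terminate in $B_0$ at a splitting point $u'$ where two rays $l_{\gamma_1}, l_{\gamma_2}$ meet with $\gamma_1 + \gamma_2 = \gamma$ and $\tilde{\Omega}(\gamma_i; u')\neq 0$ for $i=1,2$; this is the bubbling description of endpoints of Maslov-zero walls in family Floer theory, which is consistent since all discs here have Maslov index zero by Remark \ref{affine line}. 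Since $\omega(\gamma_i) < \omega(\gamma)$, the inductive hypothesis forces both $\gamma_1, \gamma_2$ to be listed classes, but then Lemma \ref{514} prohibits $u' \in l_{\gamma_1} \cap l_{\gamma_2}$, a contradiction. Therefore $l_\gamma$ extends into $\{|u|\ll 1\}$, and Theorem \ref{local discs} applied at any $u_1 \in l_\gamma \cap \{|u|\ll 1\}$ identifies $\gamma$ as one of $\pm d\gone,\pm d\gtwo,\pm d(\gone+\gtwo)$ with $\tilde{\Omega}(\gamma; u_1) = (-1)^{d-1}/d^2$.

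Finally, to obtain the value at $u_0$, I would propagate along $l_\gamma$ from $u_1$ to $u_0$. By what has just been shown, the only BPS rays in $B_0$ are those of listed classes, and by Lemma \ref{514} distinct such rays do not intersect. Hence no wall is crossed along $l_\gamma$, and the wall-crossing formula in Theorem \ref{thm:wallcrossing} gives $\tilde{\Omega}(\gamma; u_0) = \tilde{\Omega}(\gamma; u_1) = (-1)^{d-1}/d^2$. The main obstacle is the splitting/bubbling analysis in the middle step: one must pin down, using the $A_\infty$-pseudo-isotopy framework behind Theorem \ref{thm:wallcrossing} and the compactness of the relevant moduli of Maslov-zero discs on the non-compact $X$ (which follows from the curvature and injectivity-radius decay cited before Theorem \ref{399}), that every BPS ray terminating in $B_0$ does so at a splitting point of the stated form; once this is in hand, the induction on $\omega(\gamma)$ and Lemma \ref{514} close the argument cleanly.
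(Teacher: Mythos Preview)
Your proposal is correct and follows essentially the same split attractor flow argument as the paper: flow along $l_\gamma$ toward decreasing symplectic area, use Gromov compactness to ensure the process terminates after finitely many splittings, observe that the leaves must be the classes of Theorem \ref{local discs}, and then invoke Lemma \ref{514} to rule out any branching. The paper phrases the recursion as building a rooted tree whose $1$-valent vertices map to $0$, and allows a general decomposition $\gamma=\sum_{\alpha\in A}\gamma_\alpha$ with $|A|\ge 2$ at a jump point (via Lemma \ref{compatibility} applied to a small loop), rather than restricting to two-piece splittings as you do; but this is a cosmetic difference, since the contradiction with Lemma \ref{514} only needs two distinct listed rays to meet.
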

\begin{proof}
   This is a direct consequence of the split attractor flow mechanism of the open Gromov-Witten invariants $\tilde{\Omega}(\gamma;u)$ (see  \cite{L8}*{Theorem 6.32}). We will sketch the proof here for being self-contained. Let $l_{\gamma}$ be a ray emanating from $u$ such $\omega_{\gamma}$ is decreasing along $l_{\gamma}$. From Gromov compactness theorem, the loci where $\tilde{\Omega}(\gamma)$ jumps on $l_{\gamma}$ are discrete. 
   Assume that $\tilde{\Omega}(\gamma)$ is invariant along $l_{\gamma}$, then the holomorphic disc representing $\gamma$ can only fall into a tubular neighborhood of the singular fibre over $0$ by \cite{CJL}*{Proposition 5.3} when the symplectic area decrease to small enough. We will show below that $\gamma$ is one of the relative classes in Theorem \ref{local discs}.  
   Otherwise, assume that $u_1$ is the first point where $\tilde{\Omega}(\gamma)$ jumps. Apply Lemma \ref{compatibility} to a small loop around $u_1$, there exists $\gamma_{\alpha}$, $\alpha\in A$ such that $\tilde{\Omega}(\gamma_{\alpha};u_1)\neq 0$ and $\gamma=\sum_{\alpha\in A} \gamma_{\alpha}$ with $|A|\geq 2$. In particular, $\omega(\gamma_{\alpha})<\omega(\gamma)$. One may replace $(\gamma,u)$ by $(\gamma_{\alpha},u_1)$ and run the procedure. Again by Gromov compactness theorem, after finitely many splittings, all the relative classes are among the one listed in Theorem \ref{local discs}. To sum up, there exists a rooted tree $T$ and a continuous map $f$ such that the root maps to $u$, each edge is mapped to an affine line segment and all the $1$-valent vertices are mapped to $0$.
   Since the affine lines corresponding to the relative classes in Theorem \ref{local discs} do not intersect by Lemma \ref{514}, the corollary follows. 
\end{proof}
Although it looks like there are six classes of relative classes support non-trivial open Gromov-Witten invariants, next we explain that there are actually only five BPS rays emanating from $u=0$ due to the monodromy.

Thanks to Remark \ref{transitive}, we will choose the scaling of the coordinate $u$ such that the branch cut is $\mbox{Arg}(u)=0$ and $l_{\gamma_1}=\{u\in \mathbb{R}_+\}$, where $\gamma_1=-\gone$. Let $\gamma_{i+1}=M\gamma_i$ on the complement of branch cut, for $i\in \mathbb{Z}$. Straight-forward calculation shows that $\gamma_{i+6}=\gamma_i$ and $\gamma_{i-1}-\gamma_i+\gamma_{i+1}=0$. Denote the symplectic and complex affine coordinate (with respect to $\gamma_k,\gamma_{k+1}$) discussed in Section \ref{affine} by 
\begin{align} \label{ac}
   x_k&=\int_{\gamma_{k}}\omega,  \qquad y_k=\int_{\gamma_{k+1}}\omega \\
     \check{x}_k&=\int_{\gamma_{k}}\mbox{Im}\Omega , \qquad \check{y}_k=\int_{\gamma_{k+1}}\mbox{Im}\Omega.
 \end{align}
We will also denote by 
\begin{align} \label{cac}
     x&=\int_{-\gone}\omega, \qquad y=\int_{\gtwo}\omega, \notag \\
     \check{x}&=\int_{\gtwo}\mbox{Im}\Omega, \qquad \check{y}=\int_{-\gone}\mbox{Im}\Omega,
\end{align} which give another set of symplectic/complex affine coordinates.

Recall that  $x_k(u)-i\check{x}_k(u)=Z_{\gamma_k}(u)$ is a holomorphic function with respect to the above complex structure on $B$ defined on the complement of the branch cut and can be analytic continued to a multi-valued holomorphic function on $\mathbb{C}^*$. In particular, if $\gamma$ is a relative class in Theorem \ref{local discs}, then $x_k(u)>0$ and $\check{x}_k(u)=0$ along a BPS ray $l_{\gamma_k}$. From \eqref{central charge}, one have 
   \begin{align} \label{central charge explicit}
     x_k-i \check{x}_k=c_ku^{\frac{a  }{6}},  \quad k\in \mathbb{Z},
   \end{align} for some constant $a\in \mathbb{N}, c_k\in \mathbb{C}^*$. With more analysis, we have the following lemma
  \begin{lem} \label{central charge II} With the above choice of coordinate $u$ on $B_0\cong \mathbb{C}^*$, we have 
    \begin{align}
      x_k-i\check{x}_k=e^{2\pi i(k-1)\frac{5}{6}}u^{\frac{5}{6}}.
    \end{align} In particular, the angle between $l_{\gamma_k}$ and $l_{\gamma_{k+1}}$ is $\frac{2\pi}{5}$ with respect to the conformal structure after hyperK\"ahler rotation.\footnote{Notice that there is no well-defined notion of angle with only an affine structure on $B_0$ and thus one does not see this aspect on the affine manifold used in Gross-Hacking-Keel.}
  \end{lem}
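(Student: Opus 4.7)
The plan is to determine the two unknowns in \eqref{central charge explicit}---the exponent $a$ and the constants $c_k$---by combining an explicit period computation at the type $II$ fibre with the monodromy action.

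First, to pin down $a$, I would use the Weierstrass form \eqref{15}: dehomogenising at $t = z = 1$ gives the affine fibre equation $y^2 = x^3 + u$, the equianharmonic family. Rescaling $x = u^{1/3}X,\; y = u^{1/2}Y$ identifies the fibre with the constant curve $Y^2 = X^3+1$, from which
\[
\int_{\delta(u)}\frac{dx}{y} \;=\; u^{-1/6}\int_{\delta}\frac{dX}{Y}.
\]
Since $\Omega'$ restricts near $u = 0$ to a multiple of $\tfrac{dx \wedge du}{y}$, integrating over a 2-cycle representing $\gamma$ that caps off onto the cuspidal fibre gives the asymptotic $Z_\gamma(u) \sim u^{5/6}$. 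Combined with the structural result established during the proof of Lemma \ref{514} (that $Z_\gamma(u^6)$ extends to a rational self-map of $\mathbb{P}^1$ fixing $\{0, \infty\}$), this asymptotic upgrades to the exact identity $Z_{\gamma_k}(u) = c_k u^{5/6}$, so $a = 5$. As a consistency check: applying $Z_{-}$ to the linear relation $\gamma_{k-1} - \gamma_k + \gamma_{k+1} = 0$ gives $c_{k-1} - c_k + c_{k+1} = 0$, and combined with the monodromy rule below this forces $2\cos(2\pi a/6) = 1$, i.e.\ $a \equiv \pm 1 \pmod 6$, compatible with $a = 5$.

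Next, to determine the constants, I would fix the principal branch of $u^{5/6}$ on $B_0 \setminus \mathbb{R}_+$. Analytic continuation of $Z_{\gamma_1}(u) = c_1 u^{5/6}$ once counter-clockwise around $u = 0$ multiplies $u^{5/6}$ by $e^{2\pi i \cdot 5/6}$; on the other hand, by the definition $\gamma_{k+1} = M\gamma_k$, this continuation equals $Z_{\gamma_2}$. Iteration yields $c_k = c_1 e^{2\pi i(k-1)\cdot 5/6}$. The coordinate $u$ has been fixed only up to a positive real rescaling once the branch cut and $l_{\gamma_1} = \mathbb{R}_+$ are prescribed, and this remaining freedom is used to normalise $c_1 = 1$, producing the claimed formula.

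Finally, the angular statement follows by solving $\mathrm{Arg}(c_k u^{5/6}) \equiv 0 \pmod{2\pi}$ with $\theta := \mathrm{Arg}(u) \in [0, 2\pi)$: for $k = 1,\ldots,5$ this pins down $\theta_k = (k-1)\cdot 2\pi/5$, and $l_{\gamma_6}$ lands at $\theta = 2\pi$, which coincides with $l_{\gamma_1}$ after identification across the branch cut by the monodromy. I expect the most delicate point to be distinguishing $a = 5$ from $a = 1$: the algebraic constraints (monodromy combined with the three-term linear relation) leave exactly this two-fold discrete ambiguity, and it is the explicit period integration at the type $II$ cusp---equivalently, the vanishing-cycle nature of $\gamma$ there---that fixes the correct root.
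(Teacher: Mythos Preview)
Your proposal is correct and follows essentially the same route as the paper: pin down the exponent $a$ by an explicit period computation near the type $II$ cusp, then propagate via the monodromy relation $Z_{\gamma_{k+1}}(u)=Z_{\gamma_k}(ue^{2\pi i})$ to obtain $c_k=e^{2\pi i(k-1)\cdot 5/6}$ after normalising $c_1=1$, and finally read off the angle $2\pi/5$ from the phase condition.

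The one noteworthy difference is in the execution of the period step. The paper parametrises the vanishing $2$-chains $\gamma_{i,j}$ explicitly over the segment $[0,u_0]$, writes out the double integral $\int_0^{u_0}\!\int_0^1 \tfrac{x'(s)\,ds}{\sqrt{(x-\zeta^i)(x-\zeta^j)(x-\zeta^k)}}\,du$, and extracts the $u^{-1/6}$ behaviour of the inner integral by hand before integrating in $u$. Your scaling argument $x=u^{1/3}X,\ y=u^{1/2}Y$ collapses this to the single observation $\int_{\delta(u)}dx/y=u^{-1/6}\int_{\delta}dX/Y$ and is cleaner; it also makes transparent that the constant $\int_{\delta}dX/Y$ is nonzero since $\delta$ is a nontrivial cycle on the fixed curve $Y^2=X^3+1$, which is the content of the paper's remark that $C_{0,1},C_{1,2}$ are $\mathbb{Z}$-independent. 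Your additional consistency check via the three-term relation $c_{k-1}-c_k+c_{k+1}=0$ forcing $a\equiv\pm 1\pmod 6$ is a nice sanity check not present in the paper.
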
 
  \begin{proof}
     From the normalization, we have  $x_1-i\check{x}_1=u^{\frac{a}{6}}$. Recall that $Z_{\gamma_k}:=x_k-i\check{x}_k$. From the monodromy $M{\gamma_k}=\gamma_{k+1}$, we have 
 \begin{align*}
     Z_{\gamma_{k+1}}(u)=Z_{\gamma_{k}}(ue^{2\pi i})=e^{2\pi i \frac{a}{6}}Z_{\gamma_k}(u).
 \end{align*} Here recall that $Z_{\gamma_i}(u)$ is a priori only defined on the complement of the branch cut and we use $Z_{\gamma}(ue^{2\pi i})$ to denote the value of analytic continuation across the branch cut counter-clockwise once at $u$. 
 
 Now it suffices to show that $a=5$ or show that $Z_{\gamma_i}(u)=O(|u^{\frac{5}{6}}|)$. This can be seen by direct computation. Indeed, for $u$ close enough to the origin $0\in B$, the representatives of $\gamma_i$ can be chosen to be in a neighborhood of the singular point of the type $II$ singular fibre. In such neighborhood, $X'_{II}$ is defined by $y^2=x^3+u$ from \eqref{15}. One can write $\Omega'=\frac{2f(u)}{u}dy\wedge dx=f(u)du\wedge \frac{dx}{y}$ for some holomorphic function $f(u)$ with $f(0)\neq 0$, since $\Omega'$ is a non-where vanishing holomorphic $2$-form on $X'_{II}$.\footnote{Notice that $u=y^2-x^3$ is a well-defined function on the chart.} Recall that the fibre over $u$ is topologically the compactification of $y^2=x^3+u$, a double cover of the $x$-plane ramified at three points $\zeta^i(-u)^{\frac{1}{3}},i=0,1,2$ where $\zeta=\exp{(2\pi i/3)}$. A path connecting $\zeta^i(-u)^{\frac{1}{3}},\zeta^j (-u)^{\frac{1}{3}}$ in the $x$-plane lifts to an $S^1$ in the fibre.
 Consider the $2$-chain $\gamma_{i,j}, i\neq j$, which is an $S^1$-fibration over a line segment from $u=0$ to $u=u_0$ such that the $S^1$-fibre in $L_u$ is the double cover of path connecting  $\zeta^i(-u)^{\frac{1}{3}},\zeta^j (-u)^{\frac{1}{3}}$ in $x$-plane. Each of $\gamma_{i,j}$ can be represented by the $2$-chain parameterized by $u=tu_0$, and the double cover of $x=s\zeta^i (-u)^{\frac{1}{3}}+(1-s)\zeta^j (-u)^{\frac{1}{3}}$, with $t\in [0,1],s\in [0,1]$.
 Since $\partial \gamma_i$ are vanishing cycles and generate $H_1(L_u)\cong H_2(X,L_u)$, $\gamma_i$ can be represented by some linear combination $a\gamma_{0,1}+b\gamma_{1,2}$ with $a,b\in \mathbb{Z}$ and $a^2+b^2\neq 0$.\footnote{The isomorphism can be easily seen from the Mayer-Vietoris sequence.}
 
 Then by direct calculation, one has 
 \begin{align}\label{asymptotic}
     Z_{\gamma_{i,j}}(u_0)=\int_{\gamma_{i,j}}\Omega'&=\int_{u=0}^{u=u_0}\int_{x=\zeta^i (-u)^{\frac{1}{3}}}^{x=\zeta^j (-u)^{\frac{1}{3}}} f(u)du\wedge\frac{dx}{y} \\
     &=\int_{u=0}^{u=u_0}\bigg(\int_{x=\zeta^i (-u)^{\frac{1}{3}}}^{x=\zeta^j (-u)^{\frac{1}{3}}} \frac{dx}{y}\bigg) du   +O(|u_0|) \notag\\
     &=\int_{u=0}^{u=u_0}\bigg(\int_{x=\zeta^i (-u)^{\frac{1}{3}}}^{x=\zeta^j (-u)^{\frac{1}{3}}} \frac{dx}{(x^3+u)^{\frac{1}{2}}}\bigg) du   +O(|u_0|)  \notag \\    &=\int_{u=0}^{u=u_0}\bigg(\int_{s=0}^{s=1}\frac{x'(s)ds}{\big((x(s)-\zeta^i(-u)^{\frac{1}{3}})(x(s)-\zeta^j(-u)^{\frac{1}{3}})(x(s)-\zeta^k(-u)^{\frac{1}{3}})\big)^{\frac{1}{2}}}  \bigg)du+O(|u_0|).
 \end{align} Here we have $k\in \{0,1,2\}\setminus \{i,j\}$ and we use the change of variable $x(s)=s\zeta^i (-u)^{\frac{1}{3}}+(1-s)\zeta^j (-u)^{\frac{1}{3}})$ in the forth line. Using $x'(s)=O(u^{\frac{1}{3}})$ and factoring out $u^{\frac{1}{2}}$ in the denominator of the last line of \eqref{asymptotic}, we have the part in the parenthesis is asymptotic to $\frac{u^{\frac{1}{3}}}{u^{\frac{1}{2}}}\int_0^1\frac{ds}{\big((s(1-s)\big)^{\frac{1}{2}}}=O(u^{-\frac{1}{6}})$. From the fact that $\int_0^{u_0}u^{-\frac{1}{6}}du=O(u_0^{\frac{5}{6}})$, we arrive at 
  \begin{align*}
     Z_{\gamma_{i,j}}(u)=C_{i,j}u^{\frac{5}{6}}+O(|u|),
  \end{align*}
 where $C_{i,j}$ is some constant independent of $u_0$  and $C_{0,1},C_{1,2}$ is linear independent over $\mathbb{Z}$. Thus, we have similar estimate 
  \begin{align*}
      Z_{\gamma_i}(u)=O(u^{\frac{5}{6}}).
  \end{align*}
 In particular, it implies that $Z_{\gamma_1}(u)=u^{\frac{5}{6}}$ from the choice of the normalization of $u$. Then
   \begin{align*}
       Z_{\gamma_2}(u)=Z_{M\gamma_1}(u)=(e^{2\pi i}u)^{\frac{5}{6}}=e^{2\pi i \frac{5}{6}}u^{\frac{5}{6}}.
   \end{align*} Thus, $Z_{\gamma_2}(u)\in \mathbb{R}_{>0}$ if and only if $u\in \mathbb{R}_{>0}e^{\frac{2\pi}{5} i}$. On the other hand, when $u\in l_{\gamma_{k}}$, one has  $Z_{\gamma_{k}}(u)=\int_{\gamma_k}\omega-i\int_{\gamma_k}\mbox{Im}\Omega\in \mathbb{R}_{>0}$ by Remark \ref{affine line} and the fact that symplectic area of a holomorphic disc is positive. Thus, we have $l_{\gamma_2}=\{u\in \mathbb{R}_{>0}e^{\frac{2\pi i}{5}}\}$, or the angle between $l_{\gamma_1},l_{\gamma_2}$ is $\frac{2\pi}{5}$. The general statement of the second part of the lemma can be then proved inductively. Finally we observe that $Z_{\gamma_6}\notin \mathbb{R}_+$ until $u$ varies across the branch cut counter-clockwisely. If one analytic continues $Z_{\gamma_6}$ across the branch cut counter-clockwisely then $Z_{\gamma_7}=Z_{\gamma_1}$ because $\gamma_7=M\gamma_6$ and $M^6=id$. Therefore, the corresponding BPS ray again has the same locus as $l_{\gamma_1}$.
  
  \end{proof}
\begin{remark} \label{five BPS rays}
    There are only five BPS rays in total instead of six. In other words, there are only five families of discs with non-trivial open Gromov-Witten invariant and contributing to the construction of the family Floer mirror.
\end{remark}
We conclude the above discussion now:
\begin{thm}\label{913}
With the notation above,
\[\gamma_1=-\gamma_1',\ \gamma_2=\gamma_2',\ \gamma_3=\gamma_1'+\gamma_2',\ \gamma_4=\gamma'_1,\ \gamma_5=-\gamma_2'.\] Then
	\begin{enumerate}
	    \item $f_{\gamma}(u)\neq 1$ if and only if $u\in l_{\gamma_i}$ and $\gamma=\gamma_i$ for some $i=1,\cdots, 5$ .
	    \item In such cases, $f_{\gamma_i}=1+T^{\omega(\gamma_i)}z^{\partial\gamma_i}$.
	    \item The branch cut can be chosen to be between $l_{\gamma_1}$ and $l_{\gamma_5}$.
	\end{enumerate}
\end{thm}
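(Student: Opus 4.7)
The plan is to combine the classification of disc classes coming from the split attractor flow mechanism with the explicit angular localization of the BPS rays obtained from the central charge computation. First, Corollary~\ref{split attractor flow} forces any relative class $\gamma$ with $f_\gamma \not\equiv 1$ to be a positive multiple of one of the six primitive classes $\pm\gone, \pm\gtwo, \pm(\gone+\gtwo)$; using the explicit identification in the statement, these are exactly $\{\,d\gamma_i : d\in\N,\ i=1,\dots,6\,\}$, where $\gamma_{i+1}=M\gamma_i$.

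Second, I would invoke Lemma~\ref{central charge II}, which computes $Z_{\gamma_i}(u)=e^{2\pi i(i-1)\cdot 5/6}\,u^{5/6}$, to locate each ray $l_{\gamma_i}$ in the complement of the branch cut. The condition $Z_{\gamma_i}(u)\in\R_{>0}$ pins down $\mathrm{Arg}(u)\equiv (i-1)\cdot 2\pi/5 \pmod{12\pi/5}$, so the rays $l_{\gamma_1},\dots,l_{\gamma_5}$ sit at angles $0,\,2\pi/5,\,4\pi/5,\,6\pi/5,\,8\pi/5$. The would-be sixth ray $l_{\gamma_6}$ would appear at $\mathrm{Arg}(u)=2\pi$, but crossing the branch cut once counter-clockwise sends $\gamma_6$ to $M\gamma_6=\gamma_1$ by parallel transport, so this sixth locus is identified with $l_{\gamma_1}$ as in Remark~\ref{five BPS rays}. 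This establishes claim~(1); claim~(3) then follows by placing the branch cut in the angular gap $(8\pi/5,\,2\pi)$ between $l_{\gamma_5}$ and $l_{\gamma_1}$, so that all five rays lie in one simply connected chart where $\gamma_1,\dots,\gamma_5$ are globally well-defined.

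For claim~(2), I would substitute the open Gromov-Witten invariants into Definition~\ref{thm:wallfun}. By Corollary~\ref{split attractor flow} together with the invariance of $\tilde{\Omega}$ along a BPS ray, one has $\tilde{\Omega}(d\gamma_i;u)=(-1)^{d-1}/d^2$ for every $u\in l_{\gamma_i}$ and every $d\in\N$, propagated from the local computation of Theorem~\ref{local discs} via the split attractor flow along the disjoint (Lemma~\ref{514}) ray $l_{\gamma_i}$. The defining formula then reads
\begin{equation*}
\log f_{\gamma_i} \;=\; \sum_{d\geq 1} d \cdot \frac{(-1)^{d-1}}{d^2}\bigl(T^{\omega(\gamma_i)} z^{\partial\gamma_i}\bigr)^d \;=\; \sum_{d\geq 1} \frac{(-1)^{d-1}}{d}\bigl(T^{\omega(\gamma_i)} z^{\partial\gamma_i}\bigr)^d \;=\; \log\bigl(1+T^{\omega(\gamma_i)} z^{\partial\gamma_i}\bigr),
\end{equation*}
and exponentiating yields $f_{\gamma_i}=1+T^{\omega(\gamma_i)} z^{\partial\gamma_i}$, a polynomial, as claimed.

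The main obstacle is reconciling the six candidate ``local'' classes near the type $II$ fibre with only five ``global'' rays in $B_0$; this is really a bookkeeping statement about the order-$6$ monodromy acting on a base of total angle $2\pi$. Once Lemma~\ref{central charge II} is in hand, the argument becomes a direct angular count, and identifying the wall functions reduces to recognizing the Taylor expansion of $\log(1+X)$.
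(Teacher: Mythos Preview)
Your proposal is correct and follows essentially the same approach as the paper's proof, which is quite terse: it cites Remark~\ref{five BPS rays} for (1), Definition~\ref{thm:wallfun} and Theorem~\ref{local discs} for (2), and the branch-cut normalization below Corollary~\ref{split attractor flow} for (3). Your version simply unpacks these citations, in particular making explicit the logarithm computation $\sum_{d\ge 1}\frac{(-1)^{d-1}}{d}X^d=\log(1+X)$ for (2) and the angular bookkeeping from Lemma~\ref{central charge II} for (1) and (3), which the paper leaves implicit.
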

\begin{proof}
    Remark \ref{five BPS rays} explains that there are actually five BPS rays $l_{\gamma_i},i=1,\cdots, 5$. The second statement comes from Definition \ref{thm:wallfun} and Theorem \ref{local discs}. The third statement is how we define the branch cut below Corollary \ref{split attractor flow}.
\end{proof}
 The affine structure is illustrated in Figure \ref{fig:1branchcut} below. In Figure \ref{fig:1branchcut}, the curvy ray, between $l_{\gamma_5}$ and $l_{\gamma_1}$, represents the branch cut. The `monodromy' of the affine structure, can be seen as gluing the curvy ray with $l_{\gamma_1}$. 
 The shaded region indicates the gluing region. 
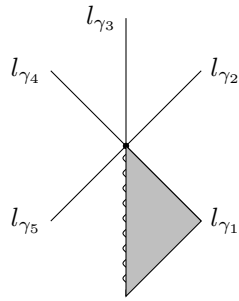
\begin{figure}[H]
\centering
\begin{tikzpicture}
\draw
(0,0) -- (1,1) node[right] {$l_{\gamma_2}$}
(0,0) -- (0,1.7) node[left] {$l_{\gamma_3}$}
(0,0) -- (-1,1) node[left] {$ l_{\gamma_4}$}
(0,0) -- (-1,-1) node[left] {$l_{\gamma_5}$}
(0,0) -- (1,-1) node[right] {$l_{\gamma_1}$};
\filldraw(0, 0) circle(1pt);
\draw [snake=snake,
segment amplitude=.4mm,
segment length=2mm] (0,0)--(0, -2);
\draw[fill=lightgray]  (0,0) -- (0,-2) -- (1,-1) -- cycle;
\end{tikzpicture}
\caption{BPS rays near the singularity.} \label{fig:1branchcut}
\end{figure}
Straight-forward calculation shows that
\begin{align}\label{boundary relation}
   \gamma_{i+2}=-\gamma_i+\gamma_{i+1},
\end{align} 
 which is the analogue of \eqref{affine relation}.

\subsection{Construction of Family Floer Mirror of $X_{II}$} \label{section: construction}


Let $U_k$ be the chamber bounded by $l_{\gamma_k}$ and $l_{\gamma_{k+1}}$ in $B_0$, $i=1,\cdots, 4$ and $U_5$ be the chamber bounded by $l_{\gamma_5}$ and $l_{\gamma_1}$. Thus there are only 5 chambers.
Recall that the dotted line represents a branch cut between $l_{\gamma_1}$ and $l_{\gamma_5}$. 
 With such branch cut and monodromy, we trivialize the local system $H_2(X,L_u)$ over the complement of the branch cut. Recall that we have  $M\gamma_i=\gamma_{i+1}$ by definition.


   Next, we compare the complex affine structure from the SYZ fibration with the affine structure from Gross-Hacking-Keel (see Section \ref{RES}). 
\begin{lem} \label{affine identification}
  The complex affine structure on $B_0$ is isomorphic to the affine manifold $B_{\GHK}$ with singularity constructed from del Pezzo surface of degree five relative to a cycle of five rational curves in \cite{GHK}. 
\end{lem}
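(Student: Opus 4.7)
The plan is to identify the two affine manifolds by matching the 5 BPS rays $l_{\gamma_1}, \ldots, l_{\gamma_5}$ from Theorem \ref{913} with the 5 rays $\rho_1, \ldots, \rho_5$ of the GHK tropicalization, and then verifying that the local affine transition data agrees chart by chart. The key input is that the boundary class relation (\ref{boundary relation}) in $H_2(X, L_u)$ matches the GHK affine relation (\ref{affine relation}) precisely when each $D_i$ is a $(-1)$-curve.

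First, I would set up the GHK side for $Y$ the del Pezzo surface of degree $5$ and $D = D_1 + \cdots + D_5$ the standard anti-canonical pentagon of smooth rational curves. Using $K_Y^2 = 5$ together with the cycle identity $D^2 = \sum_i D_i^2 + 2 \cdot 5 = K_Y^2$, one obtains $\sum_i D_i^2 = -5$, realized uniformly by $D_i^2 = -1$ on the classical pentagon of lines on dP5. Substituting into the GHK formula yields, in every chart, $\psi_i(v_{i-1}) = (1,0)$, $\psi_i(v_i) = (0,1)$, $\psi_i(v_{i+1}) = (-1, 1)$, equivalently the local affine relation $\psi_i(v_{i-1}) + \psi_i(v_{i+1}) = \psi_i(v_i)$ at each ray.

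On the SYZ side, the 5 BPS rays partition $B_0$ into 5 maximal chambers, matching the combinatorics of $\Sigma$. I would use Remark \ref{rk:affine}, combined with the symplectic pairing on $H_1(L_u, \mathbb{Z})$, to identify the primitive outward-pointing tangent vector of $l_{\gamma_i}$ with $\partial \gamma_i$ (the sign being fixed by the convention that $|Z_{\gamma_i}|$ is increasing along $l_{\gamma_i}$, as in the proof of Lemma \ref{514}): differentiating the defining condition $\int_{\gamma_i} \mathrm{Im}\,\Omega \equiv 0$ along $l_{\gamma_i}$ and invoking $v Z_{\gamma_i} = \int_{\partial \gamma_i} \iota_{\tilde v} \Omega$ forces the class in $H_1(L_u, \mathbb{Z})$ corresponding to a tangent vector of $l_{\gamma_i}$ to be proportional to $\partial \gamma_i$. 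Under this identification, the boundary relation $\gamma_{i-1} + \gamma_{i+1} = \gamma_i$, obtained by re-indexing (\ref{boundary relation}), pulls back to exactly the GHK relation $v_{i-1} + v_{i+1} = v_i$. Since both integral affine structures on $\mathbb{R}^2 \setminus \{0\}$ are determined by the combinatorial ray decomposition together with these local relations between adjacent ray generators, the local agreement assembles into a global isomorphism of affine manifolds with singularity, and monodromy consistency at the origin follows automatically from the local data (matching the order-$6$ SYZ monodromy (\ref{monodromy II}) with the composition of the five GHK chart transitions up to the natural dualization between $T B_{\GHK}$ and $H_1(L_u, \mathbb{Z})$). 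The main obstacle is the careful bookkeeping of signs and orientations in the identification between primitive tangent directions on $B_0$ and boundary classes in $H_1(L_u, \mathbb{Z})$; once this is set up uniformly, the chart-by-chart comparison is mechanical.
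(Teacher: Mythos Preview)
Your proposal is correct and shares the same core idea as the paper: identify each BPS ray $l_{\gamma_i}$ with the GHK ray $\rho_i$ and observe that the relation $\gamma_{i-1}+\gamma_{i+1}=\gamma_i$ from \eqref{boundary relation} matches the GHK affine relation \eqref{affine relation} precisely when $D_i^2=-1$, which holds for the pentagon on $dP_5$.

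The execution differs. The paper works in explicit coordinates: it invokes the central charge formula of Lemma~\ref{central charge II} to compute directly that $l_{\gamma_1}=\{\check y=0,\check x>0\}$ and $l_{\gamma_2}=\{\check x=0,\check y>0\}$, reads off the remaining ray directions $(-1,1),(-1,0),(0,-1)$ from the linear relation $-Z_{\gamma_i}+Z_{\gamma_{i+1}}=Z_{\gamma_{i+2}}$, and then handles the monodromy by an explicit gluing of two overlapping sectors $\mathcal V_1,\mathcal V_2\subset\mathbb R^2$ across the branch cut. Your route via Remark~\ref{rk:affine} is more conceptual and avoids the explicit $u^{5/6}$ computation; what you trade away is exactly the bookkeeping you flag---under that identification the outward tangent of $l_{\gamma_i}$ actually corresponds to $-\partial\gamma_i$ rather than $+\partial\gamma_i$ (harmless, since the relation is homogeneous), and the fact that each chamber $U_i$ really maps onto the full cone $\sigma_{i,i+1}$ (rather than some other affine region) is left implicit, whereas the paper's coordinate computation makes both points manifest.
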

\begin{proof}
First notice that one can compute the complex affine coordinates on $l_{\gamma_1},l_{\gamma_2}$,
\begin{align}
    l_{\gamma_1}&=\{\check{y}=0,\check{x}>0\} \notag \\
    l_{\gamma_2}&=\{\check{x}=0,\check{y}>0\}.
\end{align} 
Indeed, we have $\check{y}=0$ on $l_{\gamma_1}$ by Remark \ref{affine line}. From Lemma \ref{central charge II}, we have \begin{align*}
    \check{x}(u)=\int_{\gtwo}\mbox{Im}\Omega=\mbox{Re}Z_{\gamma}(u)=\mbox{Re}Z_{-M\gone}(u)=-\mbox{Re}(ue^{2\pi i})^{\frac{5}{6}}>0,
\end{align*} for $u\in l_{\gamma_1}$. One can compute the case of $u\in l_{\gamma_2}$ similarly. 
Therefore, with respect to the complex affine structure, the primitive tangent vectors of  $l_{\gamma_1},l_{\gamma_2}$ are given by $\frac{\partial}{\partial \check{x}}, \frac{\partial}{\partial \check{y}}$. To compare with the affine structure from Gross-Hacking Keel, we will identify them with $\mathbb{R}_{>0}(1,0),\mathbb{R}_{>0}(0,1)$. 
Then $(-1,1),(-1,0),(0,-1)$ are the tangents of $l_{\gamma_3},l_{\gamma_4},l_{\gamma_5}$ respectively by Lemma \ref{central charge II} and the relation $-Z_{\gamma_i}+Z_{\gamma_{i+1}}=Z_{\gamma_{i+2}}$ which is the analogue of \eqref{affine relation}. 
Therefore, the complex affine coordinates on the region in $\{u\in B_0|0< \mbox{Arg}u<\frac{8\pi}{5} \}$ is isomorphic to the one on the sector (without the vertex) from $(1,0)$ counter-clockwise to $(0,-1)$ viewed as an affine submanifold of $\mathbb{R}^2$. To understand the monodromy of the complex affine structure on $B_0$, one need to do the similar calculation across the branch cut. Consider the affine structure on the universal cover of $B_0$.\footnote{By abuse of notation, we still use the coordinate $u$ for the corresponding holomorphic coordinate and $\check{x},\check{y}$ for the pull back of the complex affine structure.} Then similar calculation shows that the complex affine coordinates on the region in $\{u\in B_0|0< \mbox{Arg}(u)< 2\pi  \}$ is isomorphic to the one on the sector (without the vertex) from $(1,0)$ counter-clockwise to $(-1,-1)$, denoted by $\mathcal{V}_1$, viewed as an affine submanifold of $\mathbb{R}^2$. 
If one change the location of the branch cut to $\mbox{Arg}(u)=-\frac{2\pi}{5}^-$, the complex affine structure on the region in $\{u\in B_0|-\frac{2\pi}{5} < \mbox{Arg}u< \frac{8\pi}{5} \}$ is isomorphic to the one on the sector (without the vertex) from $(-1,-1)$ counter-clockwise to $(0,-1)$, denoted by $\mathcal{V}_2$, viewed as an affine submanifold of $\mathbb{R}^2$.\footnote{Alternatively, one may extend the affine structure across the original branch cut clock-wisely and then $(-1,-1)$ is the primitive tangent of $l_{\gamma_0}$.} The affine coordinates on $\{u\in B_0| 0<\mbox{Arg}(u)<\frac{8\pi}{5}\}$ from pull-back from $\mathcal{V}_1,\mathcal{V}_2$ coincide, so the complex affine structure on $\{-\frac{2\pi}{5}<\mbox{Arg}(u)<2\pi\}$ (viewed as a subset of universal cover of $B_0$) is isomorphic to the natural affine structure on $\mathcal{V}_1\cup \mathcal{V}_2$ as a subset of $\mathbb{R}^2$ (but not with respect to the affine structure inherited from $\mathbb{R}^2$). 
Recall that we have $l_{\gamma_6}=l_{\gamma_1}$ and $l_{\gamma_5}=l_{\gamma_0}$ from Remark \ref{five BPS rays}. Therefore, $B_0$ as an affine manifold is simply the gluing of the sector bounded by $(0,-1),(-1,-1)$ in $\mathcal{V}_1$ and the sector bounded by $(-1,-1),(1,0)$ in $\mathcal{V}_2$. Denote by $M:\mathbb{R}^2\rightarrow \mathbb{R}^2$ the linear map sending $(0,-1)$ to $(-1,-1)$ and $(-1,-1)$ to $(1,0)$. Explicitly, we have
     $B_0=\mathcal{V}_1\cup \mathcal{V}_2/\sim $
  as affine manifolds, where $x\sim y$ if $x$ is contained in the sector bounded by $(0,-1),(-1,-1)$ and $y=Mx\in \mathcal{V}_2$. 
 This is exactly the description of $B_{GHK}$. Moreover, one sees that $\{U_i\}_{i=1,\cdots, 5}$ coincides with the decomposition $\Sigma$ in Section \ref{RES}. 

\end{proof} 
 \begin{rmk}
    Write $\check{x}'(u)=\check{x}(ue^{2\pi i}),\check{y}'(u)=\check{y}(ue^{2\pi i})$ as the continuation of $\check{x},\check{y}$ counter-clockwisely. From \eqref{monodromy II}\eqref{cac}, one has  
      \begin{align*}
          d\check{x}'&=d\check{x}-d\check{y}  \\
          d\check{y}'&=d\check{x} 
      \end{align*} or equivalently 
      \begin{align*}
         d\check{x}&=d\check{y}'\\
         d\check{y}&=-d\check{x}'+d\check{y}'.
      \end{align*} Dually, the monodromy on the complex affine coordinate is given by 
      \begin{align*}
          \frac{\partial}{\partial \check{x}'}&=\frac{\partial}{\partial \check{x}}+\frac{\partial}{\partial \check{y}}\\
        \frac{\partial}{\partial \check{y}'}&=-\frac{\partial}{\partial \check{x}},
      \end{align*} which is exactly the gluing at the end of Lemma \ref{affine identification}, sending $(-1,0)$ (and $(-1,-1)$) to $(-1,-1)$ (and $(-1,0)$ respectively). 
      
 \end{rmk}
 
\noindent
\begin{minipage}{\linewidth}
\captionsetup{type=figure}
\begin{center}
\begin{tikzpicture}

    \node [inner sep=0] (0) at (0,0) {$\bullet$};

    \draw[->] [name path=walls1] (0) -- ++(15:3) node [above right] {$l_{\gamma_i}= \{ \check{y}_i = 0 \}$}; 
    \draw[->] [name path=walls1m1] (0)--++(87:3) node [above] {$l_{\gamma_{i+1}}$}; 
    \draw[->] [name path=wall11] (0)--++(159:3) node [above] {$l_{\gamma_{i+2}}$};
    \draw[->] [name path=wall12] (0)--++(231:3) node [below left] {$l_{\gamma_{i+3}}$}; 

    \node at (45:3) {$U_i$};
    
    \draw[color=blue] (0)--++(123:3.5) node[above] {$y_i=0$};
    \draw[color=blue] (0)--++(195:3.5) node[above] {$x_i=0$};
    
     \draw[fill=green, opacity=0.2, draw=white] (0,0) --(10:3.4)--(95:3.4) ;
     
     \node[color=olive] at (45:2) {$U_i'$};
     
     \filldraw[gray] (88:2) circle (.25);
     \node[color=gray] at (95:2.5) {$V_i$};
\end{tikzpicture}

\caption{Illustration for the notations in the beginning of Section \ref{section: construction}.}
\end{center}
\end{minipage}

Notice that a priori $l_{\gamma_i}$ is only an affine line with respect to the complex affine coordinates. To compute the family Floer mirror, we need to have a better control of the BPS rays in terms of the symplectic affine structure.  The following observation comes from \eqref{central charge explicit} directly.
\begin{lem} \label{straight}
  Any ray with a constant phase is affine with respect to the symplectic affine structure. In particular, $l_{\gamma_i}$ is an affine line with respect to the symplectic affine structure. 
\end{lem}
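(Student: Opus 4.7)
The plan is to derive the statement directly from the explicit form of the central charge established in equation \eqref{central charge explicit} and the observation that the symplectic affine coordinates are the real parts of central charges.

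First, I would recall that the K\"ahler form on $X$ is $\omega = \mathrm{Re}\,\Omega'$ while the holomorphic volume form satisfies $\mathrm{Im}\,\Omega = -\mathrm{Im}\,\Omega'$, so for any relative class $\gamma$ one has
\[
Z_\gamma(u) \;=\; \int_\gamma \Omega' \;=\; x_\gamma(u) - i\,\check{x}_\gamma(u),
\]
where $x_\gamma$ is the symplectic affine coordinate and $\check{x}_\gamma$ the complex one. The argument in Lemma \ref{514} showing $Z_\gamma(u^k) = cu$ extends to every relative class (not just the $\gamma_k$'s): since $\pi_1(B_0)\cong \Z$ and the monodromy has finite order, $Z_\gamma$ is a multi-valued holomorphic function on $\C^*$ whose $k$-th pullback extends to a degree-one self-map of $\PP^1$ fixing $0$ and $\infty$. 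Hence $Z_\gamma(u) = c\, u^{\alpha}$ for some rational $\alpha>0$ and some nonzero constant $c\in \C^*$ (in the $II$ case $\alpha=5/6$).

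Next, a ``ray with constant phase'' is by definition a locus of the form $\{u\in B_0 : \arg Z_\gamma(u) = \theta_0\}$ for some relative class $\gamma$ and some $\theta_0\in \R$. Substituting $Z_\gamma(u) = cu^\alpha$, this condition becomes $\alpha\arg u + \arg c \equiv \theta_0 \pmod{2\pi}$, i.e. $\arg u$ is constant. Thus such a ray is a radial ray $u = r e^{i\theta}$ ($r>0$) in the complex coordinate $u$ on $B_0\cong \C^*$.

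Finally, I would evaluate the symplectic affine coordinates along this radial ray. Fix any local basis $\gamma_k, \gamma_{k+1}$ of the relative homology, so that the symplectic affine chart is given by $(x_k, y_k) = (\mathrm{Re}\,Z_{\gamma_k}, \mathrm{Re}\,Z_{\gamma_{k+1}})$. Using $Z_{\gamma_k}(u) = c_k u^\alpha$ and $Z_{\gamma_{k+1}}(u) = c_{k+1} u^\alpha$ from the extension of \eqref{central charge explicit}, one computes along $u = re^{i\theta}$:
\[
x_k(r) = r^\alpha\, |c_k| \cos(\alpha\theta + \arg c_k), \qquad y_k(r) = r^\alpha\, |c_{k+1}| \cos(\alpha\theta + \arg c_{k+1}).
\]
Both expressions are constant multiples of the single real parameter $r^\alpha$, so the image of the radial ray in the $(x_k, y_k)$-chart is a straight line through the origin; this is exactly the statement that the ray is affine with respect to the symplectic affine structure. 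The particular case of $l_{\gamma_i}$ corresponds to the phase $\theta_0 = 0$, giving the ``In particular'' clause. No substantive obstacle is expected: the only mild subtlety is making sure one works on the universal cover so that $u^\alpha$ and $\arg u$ are single-valued along the ray, but this is purely notational since we already fixed a branch cut in Theorem \ref{913}.
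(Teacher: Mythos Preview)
Your proof is correct and follows essentially the same route as the paper's: both use \eqref{central charge explicit} to see that along a radial ray $u=re^{i\theta}$ the symplectic coordinates $(x_k,y_k)=(\mathrm{Re}\,Z_{\gamma_k},\mathrm{Re}\,Z_{\gamma_{k+1}})$ are each a fixed real constant times $r^{\alpha}$, hence the image is a line through the origin. The paper's version is terser---it simply starts from the parametrization $z=Ct$ and writes $x_k=C'_k t^{\alpha}$, $y_k=C''_k t^{\alpha}$---while you additionally spell out why ``constant phase of $Z_\gamma$'' forces $\arg u$ to be constant; that extra step is a fair clarification and does not change the argument.
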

\begin{proof}
   Any such ray can be parameterized by $z=Ct$ for some complex number $C$. From \eqref{central charge explicit}, the symplectic coordinates along the ray are given by
   ${x}_k=C'_k t^{\frac{2\pi k}{5}}, y_k=C''_k t^{\frac{2\pi k}{5}}$, for some $C'_k, C''_k\in \mathbb{R}$ and the lemma follows. In other words, such ray is given by the affine line $C''_kx_k=C'_ky_k$ with respect to the symplectic affine coordinates $(x_k,y_k)$.  
\end{proof}

Recall that the family mirror $\check{X}$ is defined by $\coprod \mathcal{U}_{\alpha}/\sim$, where $\mathcal{U}_{\alpha}$ is the maximum spectrum of $T_{U_{\alpha}}$, for refined enough (so that the Fukaya trick applies) open covering $\{U_{\alpha}\}_{\alpha\in A}$ and together with symplectic affine coordinates $\psi_{\alpha}:U_{\alpha}\rightarrow \mathbb{R}^2$ such that $\psi_{\alpha}(u_{\alpha})=0$ for some $u_{\alpha}\in U_{\alpha}$. 
We may take
 \begin{align*}
    \psi_{\alpha}(u)=
        (x(u)-x(u_{\alpha}),y(u)-y(u_{\alpha})).
 \end{align*} \footnote{Here we abuse the notation, denote $x,y$ the natural extension of clock-wisely across the branch cut if $U_{\alpha}$ intersects the branch cut and $u\in U_5$. In other words, one should replace $(x,y)$ by $(y,x-y)$ under the circumstances from \eqref{monodromy II}.}
 \begin{remark}\label{translation}
On one hand, from Remark \ref{chart}, we have $\mathcal{U}_{\alpha}$ identified with $\mbox{Val}^{-1}(\psi_{\alpha}(U))\subseteq (\Lambda^*)^2$. On the other hand, from Example \ref{no discs} and Theorem \ref{913}, if $U_{\alpha}\subseteq U_k$ for any $k\in 1,\cdots, 5$, then $\mathcal{U}_{\alpha}\cong \mathfrak{Trop}^{-1}(U_{\alpha})$. Here we use the symplectic affine coordinates $x,y$ on $U_{\alpha}$ to embed $U_{\alpha}$ into $\mathbb{R}^2$ as an affine submanifold and $\mathfrak{Trop}:(\mathbb{G}^{an}_m)^2\rightarrow \mathbb{R}^2$. Recall that there is a natural identification $(\Lambda^*)^2\cong (\mathbb{G}_m^{an})^2$ as sets such that the below diagram commutes.
    \begin{equation}
    \begin{tikzcd}[column sep=small] 
    (\Lambda^*)^2 \arrow[dr, "\mbox{Val}" left]
     \arrow[rr] & & \arrow[dl, "\mathfrak{Trop}"] (\mathbb{G}_m^{an})^2 \\
     & \R^2 &
     \end{tikzcd}
\end{equation} The two descriptions of $\mathcal{U}_{\alpha}$ simply differ by a translation 
  \begin{align*}
      \mbox{Val}^{-1}(U_{\alpha})&\rightarrow \mathfrak{Trop}^{-1}(U_{\alpha}) \\
       (z_1,z_2)&\mapsto (T^{x(u_{\alpha})}z_1, T^{y(u_{\alpha})}z_2) 
  \end{align*} under the above identification. 
\end{remark}

Let 
$\mathfrak{Trop}_i:(\mathbb{G}_m^{an})_i^2\rightarrow \mathbb{R}^2_i$
be the standard valuation map. Here we put an subindex $i$ for each analytic tori and later it would correspond to the five different tori. Now if $U_{\alpha}\subseteq U_i$ for some $i=1,\cdots, 5$ and $U_{\alpha}\cap U_{\alpha'}\neq \emptyset$ with the reference $u_{\alpha'}\in U_{i+1}$, then again from Ex \ref{no discs} and Theorem \ref{913}, one can naturally identify $\mathcal{U}_{\alpha}\coprod \mathcal{U}_{\alpha'}/\sim \cong \mathfrak{Trop}^{-1}(U_{\alpha}\cup U_{\alpha'})$. From the identification in Remark \ref{translation}, the function $T^{\omega(\gamma_{u_{\alpha}})}z^{\partial\gamma}\in T_{U_{\alpha}}$ and  $T^{\omega(\gamma_{u_{\alpha'}})}z^{\partial\gamma}\in T_{U_{\alpha'}}$ glue to a function on $\mathfrak{Trop}^{-1}(U_{\alpha}\cup U_{\alpha'})$, which is simply the restriction of $z^{\partial \gamma}$ on $(\Lambda^*)^2\cong (\mathbb{G}_m^{an})^2$.

Denote $U'_i=\cup_{\alpha} U_{\alpha}$, where $\alpha$ runs through those $u_{\alpha}\in U_i$. By taking refinement of the open cover, we may assume that $U_i \subseteq U'_i$ without loss of generality. Then we have the extension of the embedding $\mathfrak{Trop}^{-1}(U_i)\subseteq \check{X}$ to $\mathfrak{Trop}^{-1}(U'_i)\subseteq \check{X}$. From the previous discussion, the family Floer mirror is simply $\coprod_{i=1}^5\mathfrak{Trop}_i^{-1}(U_i')/\sim$.
Note that 
\begin{equation*}
    \begin{tikzcd}[remember picture]
    \check{X}= \bigcup_i \mathfrak{Trop}_i^{-1}(U_i')/\sim 
      & &  (\mathbb{G}_m^{an})^2 \\
     & \mathfrak{Trop}_i^{-1}(U_i) &
     \end{tikzcd}
     \begin{tikzpicture}[overlay,remember picture]
\path (\tikzcdmatrixname-1-1) to node[midway,sloped]{\Large$\supseteq$}
(\tikzcdmatrixname-2-2);
\path (\tikzcdmatrixname-1-3) to node[midway,sloped]{\Large$\subseteq$}
(\tikzcdmatrixname-2-2);
\end{tikzpicture}
\end{equation*} To distinguish the two inclusion, we will always view $\mathfrak{Trop}_i^{-1}(U_i)$ as a subset of $(\mathbb{G}_m^{an})^2$ and consider \[\alpha_i:\mathfrak{Trop}_i^{-1}(U_i)\rightarrow \check{X}.\] 
Notice that $\mathfrak{Trop}^{-1}_i(U'_i)$ only occupies a small portion of $(\mathbb{G}^{an}_m)^2$.
Thus we need to extend $\alpha_i$ to most part of  $(\mathbb{G}_m^{an})_i^2$. 
For the simplicity of the notation, we will still denote those extension of $\alpha_i$ by the same notation. 

Now we want to understand how $\mathcal{U}'_i$ glue with $\mathcal{U}'_{i+1}$. 
Let $V_i,V_{i+1}$ be any small enough rational domains on $B_0$ such that $V_i\subseteq U_i'$, $V_{i+1}\subseteq U_{i+1}'$ and the Fukaya's trick applies. Let $p\in V_i\cap V_{i+1}$ be the reference point and one has 
\begin{align*}
  (\mathbb{G}_m^{an})^2_i\supseteq\mathfrak{Trop}_i^{-1}(V_i) \supseteq \mathfrak{Trop}_i^{-1}(V_i\cap V_{i+1})\xrightarrow{\Psi_{i,i+1}} \mathfrak{Trop}_{i+1}^{-1}(V_i\cap V_{i+1}) \subseteq \mathfrak{Trop}_{i+1}^{-1}(V_{i+1})\subseteq (\mathbb{G}_m^{an})^2_{i+1},
\end{align*} 
where $\Phi_{i,i+1}=\alpha_{i+1}^{-1}\circ \alpha_i$ is given by $\Psi_{i,i+1}=S^{-1}_{u_{i+1},p}\circ \Phi_{i,i+1}\circ S_{u_i,p}$ by \eqref{2} and 
\begin{align*}
   \Phi_{i,i+1}: z^{\partial \gamma}\mapsto z^{\partial\gamma}(1+T^{\omega(\gamma_{i+1})}z^{\partial \gamma_i})^{\langle \gamma, \gamma_{i+1} \rangle} 
\end{align*} from Definition \ref{thm:wallfun} and Theorem \ref{913}. Here $\omega(\gamma_{i+1})$ is evaluated at $p$. From \eqref{boundary relation}, we have
$\langle \gamma_{i+1},\gamma_i\rangle=1$. Then with the notation and discussion below Remark \ref{translation}, we have $\Phi_{i,i+1}$ is simply the polynomial map 
  \begin{align} \label{mutation}
      &z^{\gamma_i}\mapsto z^{\gamma_i}(1+z^{\gamma_{i+1}})^{-1} \notag \\
      &z^{\gamma_{i+1}}\mapsto z^{\gamma_{i+1}}. 
  \end{align}
 Since near $l_{\gamma_{i+1}}$ one has $\omega(\gamma_{i+1})>0$, one has \begin{align} \label{eq:valequal}
   \mbox{val}(z^{\gamma})=\mbox{val}(z^{\gamma}(1+z^{\gamma_i})^{-1}). 
\end{align} Here we view $z^{\gamma}$ as a function on $(\Lambda^*)^2$ and $\mbox{val}$ is the valuation on $\Lambda^*$. 
Thus,the following commutative diagram holds under the identification $(\Lambda^*)^2\cong (\mathbb{G}_m^{an})^2$, 
\begin{equation} \label{eq:commdiag}
        \begin{tikzcd}
  &\mathfrak{Trop}_i^{-1}(V_i) \supseteq \mathfrak{Trop}_i^{-1}(V_i\cap V_{i+1}) \arrow[d, "\mathfrak{Trop}_i"]
  \arrow[r, "\Phi_{i,i+1}"] &\mathfrak{Trop}_{i+1}^{-1}(V_i\cap V_{i+1}) \arrow[d, "\mathfrak{Trop}_{i+1}"] \subseteq \mathfrak{Trop}_{i+1}^{-1}(V_{i+1}) \\
  &\R^2_i \supseteq V_i\cap V_{i+1} 
  \arrow[r, equal] & V_i\cap V_{i+1}  \subseteq \R^2_{i+1} 
\end{tikzcd}
\end{equation}
  We may view $(\Lambda^*)^2$ as the $\Lambda$-points of the scheme $(\mathbb{G}_m)^2=\mbox{Spec}\Lambda[z^{\pm\gamma_i},z^{\pm\gamma_{i+1}}]$. Then we have the commutative diagram from the functoriality of the GAGA map on objects:
\begin{equation} \label{eq:commdiag'}
\begin{tikzcd}
  &(\mathbb{G}_m^{an})^2 
  \arrow[r, "\Psi_{i,i+1}"] &(\mathbb{G}_m^{an})^2\\
  &(\mathbb{G}_m)^2  \arrow[r] \arrow[u , "\text{GAGA}"] & (\mathbb{G}_m)^2 \arrow[u, "\text{GAGA}" right]
\end{tikzcd}
\end{equation}
  Under the identification $(\Lambda^*)^2\cong (\mathbb{G}_m^{an})^2$, $\Psi_{i,i+1}$ is simply the restriction of the map $(\mathbb{G}_m^{an})^2\rightarrow (\mathbb{G}_m^{an})^2$ with the same equation as in (\ref{mutation}). Therefore, we have the same commutative diagram as in \eqref{eq:commdiag} with $V_i,V_{i+1}$ replaced by $U_i^+,U_{i+1}$ for any open subset $U^+_i\subseteq \mathbb{R}^2$ such that $\omega(\gamma_{i+1})>0$ on $U_i^+$, which we will choose it explicitly later. 

To see the largest possible extension $U^+_i$ and thus largest possible extension of the above diagram, we would want to know explicitly where $\omega(\gamma_{i+1})>0$. 
Viewing $B\cong \mathbb{C}$, we may take $U_i^+$ as the interior of the sector bounded by $l_{\gamma_i}$ and the ray by rotating $\frac{3\pi}{5}$ counter-clockwisely from $l_{\gamma_{i+1}}$ by Lemma \ref{central charge II} and this is the largest possible region (extending $U_i$ counter-clockwisely) such that $\omega(\gamma_{i+1})>0$ holds. In particular, $U_i^+$ occupies $U_i,U_{i+1}$ and half of $U_{i+2}$. 
Therefore, we have the following lemma 
\begin{lem}
   The inclusion $\alpha_i:\mathfrak{Trop}_i^{-1}(U_i)\hookrightarrow \check{X}$ can be extended to $\mathfrak{Trop}_i^{-1}(U_i^+)\hookrightarrow \check{X}$, $i=1,\cdots, 5$. We will still denote the inclusion map by $\alpha_i$. In particular, $\alpha_i(\mathfrak{Trop}^{-1}_i(U_i\cup U_{i+1}))\subseteq \check{X}$. Here we use the convention $U_{i+5}=U_i$. 
\end{lem}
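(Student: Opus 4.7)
The plan is to exploit the explicit polynomial form of the wall-crossing maps in \eqref{mutation} and extend $\alpha_i$ one wall at a time, using the fact that the analytification of each $\Phi_{j,j+1}$ preserves tropicalization exactly where the corresponding wall function is a unit. First I would show that on $\mathfrak{Trop}_i^{-1}(U_i^+)$ the equality $\mathfrak{Trop}_{i+1}\circ \Phi_{i,i+1}=\mathfrak{Trop}_i$ holds, which is precisely \eqref{eq:valequal} applied in the region where $\omega(\gamma_{i+1})>0$. By Lemma \ref{central charge II}, the zero locus of $\omega(\gamma_{j+1})=\mbox{Re}\,Z_{\gamma_{j+1}}$ consists of two rays at angular distance $\tfrac{3\pi}{5}$ from $l_{\gamma_{j+1}}$, so the positivity sector for $\gamma_{j+1}$ has angular width $\tfrac{6\pi}{5}=3\cdot\tfrac{2\pi}{5}$ and is centered at $l_{\gamma_{j+1}}$. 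The intersection of the positivity sectors for $\gamma_{i+1}$ and $\gamma_{i+2}$ is therefore exactly $U_i^+$ (the sector starting at $l_{\gamma_i}$ and rotated $\tfrac{3\pi}{5}$ counter-clockwise past $l_{\gamma_{i+1}}$), so $\Phi_{i,i+1}$ restricts to an isomorphism $\mathfrak{Trop}_i^{-1}(U_i^+)\xrightarrow{\sim}\mathfrak{Trop}_{i+1}^{-1}(U_i^+)$, and similarly $\Phi_{i+1,i+2}$ restricts to an isomorphism $\mathfrak{Trop}_{i+1}^{-1}(U_{i+2}\cap U_i^+)\xrightarrow{\sim}\mathfrak{Trop}_{i+2}^{-1}(U_{i+2}\cap U_i^+)$.

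Given these tropicalization-preserving wall-crossings, I would extend $\alpha_i$ piecewise: for $z\in\mathfrak{Trop}_i^{-1}(U_{i+1})$ set $\alpha_i(z):=\alpha_{i+1}\bigl(\Phi_{i,i+1}(z)\bigr)$, and for $z$ in the half of $U_{i+2}$ contained in $U_i^+$ set $\alpha_i(z):=\alpha_{i+2}\bigl(\Phi_{i+1,i+2}(\Phi_{i,i+1}(z))\bigr)$. Well-definedness at the walls $l_{\gamma_{i+1}}$ and $l_{\gamma_{i+2}}$ follows from the commutative diagrams \eqref{eq:commdiag} and \eqref{eq:commdiag'}, together with the fact that the equivalence $\sim$ in the construction \eqref{SG mirror} of $\check{X}$ is built out of exactly these same $\Phi_{j,j+1}$. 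Injectivity is immediate because each $\Phi_{j,j+1}$ is an analytic automorphism (its inverse is also polynomial, visible from \eqref{mutation}) and each $\alpha_j$ is already an inclusion.

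The main technical obstacle is the angle computation showing that $U_i^+$ is the maximal sector on which both $\omega(\gamma_{i+1})$ and $\omega(\gamma_{i+2})$ stay positive, which is what is needed to push across two walls rather than just one; past the boundary of $U_i^+$ one of the wall functions develops negative-valuation terms and the above construction fails. Once this positivity is verified, the final inclusion $\alpha_i\bigl(\mathfrak{Trop}_i^{-1}(U_i\cup U_{i+1})\bigr)\subseteq\check{X}$ follows for free because $U_i\cup U_{i+1}\subseteq U_i^+$ by construction.
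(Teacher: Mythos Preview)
Your approach is essentially the same as the paper's implicit argument: extend $\alpha_i$ one wall at a time, first via $\alpha_{i+1}\circ\Phi_{i,i+1}$ over $U_{i+1}$ and then via $\alpha_{i+2}\circ\Phi_{i+1,i+2}\circ\Phi_{i,i+1}$ over the near half of $U_{i+2}$, using that the wall functions are $1$-units on the relevant tropical fibres so that \eqref{eq:valequal} and hence \eqref{eq:commdiag} hold. The paper states the lemma as an immediate consequence of the extended commutative diagram and does not spell out the second wall crossing, so your write-up is in fact more explicit.

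There is, however, a mischaracterization of $U_i^+$ that you should fix. You claim that $U_i^+$ equals the intersection of the positivity sectors of $\omega(\gamma_{i+1})$ and $\omega(\gamma_{i+2})$, and later that it is the maximal sector on which both are positive. Neither is correct: that intersection runs from the midpoint of $U_i$ to the midpoint of $U_{i+2}$, whereas $U_i^+$ in the paper is, by definition, bounded on the clockwise side by $l_{\gamma_i}$ and on the counter-clockwise side by the ray $\tfrac{3\pi}{5}$ past $l_{\gamma_{i+1}}$; the clockwise boundary $l_{\gamma_i}$ is imposed by hand, not by any positivity constraint. Fortunately your proof never actually uses the claimed characterization. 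What you need is only (i) $\omega(\gamma_{i+1})>0$ on all of $U_i^+$, which holds by the paper's definition of $U_i^+$, and (ii) $\omega(\gamma_{i+2})>0$ on $U_{i+2}\cap U_i^+$, which is immediate since $U_{i+2}$ is adjacent to $l_{\gamma_{i+2}}$ and the positivity sector of $\gamma_{i+2}$ has angular radius $\tfrac{3\pi}{5}>\tfrac{2\pi}{5}$. With that correction the argument stands.
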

Notice that the commutative diagram $\eqref{eq:commdiag}$ no longer holds on $U_{i+2}\setminus U_i^+$ since 
\begin{align}\label{trop mutation}
   \mbox{val}(z^{\gamma_{i}}\big(1+z^{\gamma_{i+1}}\big)^{-1})=\mbox{val}(z^{\gamma_i})-\mbox{val}(1+z^{\gamma_{i+1}})=\mbox{val}(z^{\gamma_{i}})-\mbox{val}(z^{\gamma_{i+1}})
\end{align} 
outside of $U^+_i$, which is no longer $\mbox{val}(z^{\gamma_{i}})$ on the right hand side as in \eqref{eq:valequal}. 
Now for $V_i$ disjoint from $U_i^+$ and $V_{i+1}\subseteq U_{i+2}\subseteq U_{i+1}^+$, the diagram becomes
\begin{equation} \label{eq:commdiag1}
        \begin{tikzcd}
  &\mathfrak{Trop}_i^{-1}(V_i) \supseteq \mathfrak{Trop}_i^{-1}(V_i\cap V_{i+1})\setminus \{1+z^{\gamma_{i+1}}=0\} \arrow[d, "\mathfrak{Trop}_i"]
  \arrow[r, "\Psi_{i,i+1}"] & \arrow[d, "\mathfrak{Trop}_{i+1}"] \mathfrak{Trop}_i^{-1}(V_i\cap V_{i+1})\subseteq (\mathbb{G}_m^{an})^2 \\
  &\R^2_i \supseteq V_i\cap V_{i+1} 
  \arrow[r, "\phi_{i,i+1}"] & \R^2_{i+1}
\end{tikzcd} 
\end{equation} Recall that
\begin{align*}
   \mbox{val}(z^{\gamma_i})=\int_{\gamma_i}\omega=x_i, \hspace{3mm}
   \mbox{val}(z^{\gamma_{i+1}})=\int_{\gamma_{i+1}}\omega=y_i 
\end{align*} from \eqref{ac}
and thus together with \eqref{trop mutation} we have 
\begin{align} \label{shearing transf}
    \phi_{i,i+1}: x_i&\mapsto x_i-y_i \notag \\
    y_i& \mapsto y_i
\end{align} on its domain. Notice that $\Psi_{i,i+1}$ is only defined when $1+z^{\gamma_{i+1}}\neq 0$. 
 Since the linear map \eqref{shearing transf} is well-defined on $\mathbb{R}^2$, we will still use the same notation for such natural extension.



\begin{lem}\label{shearing lem}
  $\phi_{i,i+1}(U_{i+2}\setminus U_i^+)\subseteq U_{i+1}^+$. In particular,
  \[\alpha_i\big(\mathfrak{Trop}_i^{-1}(U_{i+2})\big)\subseteq \alpha_{i+1}\big(\mathfrak{Trop}_{i+1}^{-1}(U_{i+1}^+)\big)\subseteq \check{X}.\]
\end{lem}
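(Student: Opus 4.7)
The plan is to reduce the set-theoretic claim $\phi_{i,i+1}(U_{i+2}\setminus U_i^+)\subseteq U_{i+1}^+$ to a direct linear-algebra verification in the symplectic affine plane, and then to deduce the statement about $\alpha_i$ by extending its definition across the wall. By the five-fold cyclic symmetry of the picture, it suffices to treat the case $i=1$ and all other cases follow by relabeling.

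First, I would use the central charge formula $Z_{\gamma_k}(u)=e^{2\pi i(k-1)\cdot 5/6}u^{5/6}$ from Lemma~\ref{central charge II}, together with $x_k=\mathrm{Re}(Z_{\gamma_k})$ and Lemma~\ref{straight}, to embed each BPS ray into $\mathbb{R}^2$ via the global symplectic affine coordinates $(x_1,y_1)=(\omega(\gamma_1),\omega(\gamma_2))$ defined on the complement of the branch cut. Setting $u = re^{i\theta}$, a short computation gives the five directions $(2,1), (1,2), (-1,1), (-2,-1), (-1,-2)$ for $l_{\gamma_1},\ldots,l_{\gamma_5}$. Using $\omega(\gamma_2)=y_1$ and $\omega(\gamma_3)=-x_1+y_1$, the far boundary of $U_1^+$ (where $\omega(\gamma_2)=0$, past $l_{\gamma_1}$) is the ray in direction $(-1,0)$, and the far boundary of $U_2^+$ (where $\omega(\gamma_3)=0$, past $l_{\gamma_2}$) is in direction $(-1,-1)$. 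Hence $U_3\setminus U_1^+$ is the wedge between $(-1,0)$ and $(-2,-1)$, while $U_2^+$ is the wedge from $(1,2)$ counterclockwise to $(-1,-1)$.

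The inclusion then reduces to applying the shear $\phi_{1,2}(a,b)=(a-b,b)$ to the two boundary rays of $U_3\setminus U_1^+$: $(-1,0)\mapsto(-1,0)$ and $(-2,-1)\mapsto(-1,-1)$. The image wedge (angles $180^\circ$ to $225^\circ$) sits inside $U_2^+$ (angles $\arctan 2 \approx 63.43^\circ$ to $225^\circ$), proving the first assertion.

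For the statement about $\alpha_i$, I would decompose $\mathfrak{Trop}_i^{-1}(U_{i+2}) = \mathfrak{Trop}_i^{-1}(U_{i+2}\cap U_i^+) \cup \mathfrak{Trop}_i^{-1}(U_{i+2}\setminus U_i^+)$. On the first piece, $\alpha_i$ is already defined by the preceding lemma; since $\omega(\gamma_{i+1})>0$ there, $\mathrm{val}(1+z^{\gamma_{i+1}})=0$ and diagram \eqref{eq:commdiag} identifies $\alpha_i(p)=\alpha_{i+1}(\Psi_{i,i+1}(p))$ with $\Psi_{i,i+1}(p)\in\mathfrak{Trop}_{i+1}^{-1}(U_{i+1}^+)$, using the inclusion $U_{i+2}\subseteq U_{i+1}^+$ already observed from the angular analysis. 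On the second piece, one extends $\alpha_i$ by $\alpha_{i+1}\circ\Psi_{i,i+1}$, and diagram \eqref{eq:commdiag1} combined with the first part of the proof guarantees $\Psi_{i,i+1}(p)\in\mathfrak{Trop}_{i+1}^{-1}(U_{i+1}^+)$. The main technical care is in fixing a single coordinate convention (the global basis $(\gamma_i,\gamma_{i+1})$ on the complement of the branch cut) so that the shear $\phi_{i,i+1}$ and all the sectors $U_j^+$ live in compatible planes; once this is done, the remainder is elementary linear algebra on directions in $\mathbb{R}^2$.
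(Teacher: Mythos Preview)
Your proposal is correct and follows essentially the same route as the paper: reduce the first assertion to a linear-algebra check that a shear sends one wedge into another, then read off the consequence for $\alpha_i$ via $\Psi_{i,i+1}$ and the commutative diagrams \eqref{eq:commdiag}, \eqref{eq:commdiag1}. The only real difference is the choice of affine chart. You work in the $(x_i,y_i)=(\omega(\gamma_i),\omega(\gamma_{i+1}))$ coordinates (fixing $i=1$ by cyclic symmetry) and compute the five BPS-ray directions explicitly from Lemma~\ref{central charge II}; the paper instead passes to the shifted basis $(x_{i+1},y_{i+1})=(\omega(\gamma_{i+1}),\omega(\gamma_{i+2}))$, in which the two far boundaries of $U_i^+$ and $U_{i+1}^+$ become coordinate half-axes and the shear becomes $(x_{i+1},y_{i+1})\mapsto(x_{i+1},x_{i+1}+y_{i+1})$ via the relation $\gamma_i+\gamma_{i+2}=\gamma_{i+1}$. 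Your explicit directions $(2,1),(1,2),(-1,1),(-2,-1),(-1,-2)$ and the images $(-1,0)\mapsto(-1,0)$, $(-2,-1)\mapsto(-1,-1)$ are correct and the angular inclusion goes through; the paper's quadrant picture is the same verification in a basis that hides the numerics (note, incidentally, that what the paper calls the ``third quadrant'' is actually the second quadrant in $(x_{i+1},y_{i+1})$, a harmless slip). Your treatment of the ``In particular'' clause is also fine and in fact more explicit than the paper's, which leaves it implicit; just be aware that the extension of $\alpha_i$ via $\alpha_{i+1}\circ\Psi_{i,i+1}$ is a priori undefined on the codimension-one locus $\{1+z^{\gamma_{i+1}}=0\}$ sitting over the boundary of $U_i^+$, which is precisely what the subsequent Lemma~\ref{filling the hole} is there to repair.
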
 
\begin{proof} 
    The left boundary of $U_i^+$ is characterized by $x_{i+1}=0,y_{i+1}>0$ and the left boundary of $U_{i+1}^+$ is characterized by $x_{i+1}<0,y_{i+1}=0$ from Lemma \ref{central charge II}. Therefore, we may identify the region bounded by the above two affine lines with the third quadrant of $\mathbb{R}^2_{x_{i+1},y_{i+1}}$ as affine manifolds. Notice that this is a subset of $U_{i+1}^+$. Under such identification, we have $U_{i+2}\setminus U_i^+$ as the region bounded by $x_{i+1}+y_{i+1}=0$ and $y_{i+1}$-axis in the third quadrant by Lemma \ref{straight}. In terms of $(x_{i+1},y_{i+1})$, \eqref{shearing transf} becomes 
     \begin{align*}
         \phi_{i,i+1}: x_{i+1}&\mapsto x_{i+1} \\
                       y_{i+1}&\mapsto x_{i+1}+y_{i+1},
     \end{align*} from the relation $\gamma_i+\gamma_{i+2}=\gamma_{i+1}$. The lemma then follows from direct computation.
\end{proof}

To sum up, one can extend the original inclusion $\alpha_i\big(\mathfrak{Trop}_i^{-1}(U_i)\big)\subseteq \check{X}$ in the counter-clockwise direction to
\begin{align}\label{inclusion1}
\alpha_i\big(\mathfrak{Trop}_i^{-1}(\overline{U_i\cup U_{i+1}\cup U_{i+2}})\setminus \{1+z^{\gamma_{i+1}}=0\}\big) \subseteq \check{X}. 
\end{align} Here we use $\overline{U}$ to denote the interior of the compactification of $U$.
\begin{lem} \label{filling the hole}
  The inclusion \eqref{inclusion1} extends over $\{1+z^{\gamma_{i+1}}=0\}\setminus \mathfrak{Trop}_i^{-1}(0)$.
\end{lem}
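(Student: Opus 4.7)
The plan is to extend $\alpha_i$ across $H:=\{1+z^{\gamma_{i+1}}=0\}\setminus\mathfrak{Trop}_i^{-1}(0)$ by exhibiting an adjacent chart whose transition to chart $i$ is regular on $H$. The natural candidate is $\alpha_{i-1}$, whose gluing to chart $i$ is governed by the wall function $1+z^{\gamma_i}$ rather than $1+z^{\gamma_{i+1}}$. The arithmetic observation that makes this work is that for any $q=(X,Y)\in H\setminus\mathfrak{Trop}_i^{-1}(0)$ one has $Y=-1$ and $\mbox{val}(X)\neq 0=\mbox{val}(-1)$, so $X\neq -1$ in $\Lambda^*$.

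Inverting \eqref{mutation} applied to the wall $l_{\gamma_i}$, the backward transition takes the form
\[
\Psi_{i,i-1}(X,Y)=\bigl(X/(Y(1+X)),\,X\bigr),
\]
which is regular on $\{X\neq -1\}$ and hence on $H\setminus\mathfrak{Trop}_i^{-1}(0)$; restricted to $H$ it specialises to $(X,-1)\mapsto\bigl(-X/(1+X),\,X\bigr)$, a well-defined point of $(\mathbb{G}_m^{an})^2_{i-1}$. Using $\mbox{val}(1+X)=\min(0,\mbox{val}(X))$, the tropicalisation $\mathfrak{Trop}_{i-1}$ of this image equals $(\mbox{val}(X),\mbox{val}(X))$ when $\mbox{val}(X)>0$ and $(0,\mbox{val}(X))$ when $\mbox{val}(X)<0$. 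Translating via the lattice relation $\gamma_{i-1}=\gamma_i-\gamma_{i+1}$ and the symplectic affine description of the chambers from Lemma \ref{central charge II}, one checks that these loci lie in the (possibly once-further-extended) domain of $\alpha_{i-1}$ and avoid its excluded hypersurface $\{1+z^{\gamma_i}=0\}$; in the sign case for which this fails, the symmetric use of $\alpha_{i+2}$ with the analogous backward transition covers the remaining points.

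The extension is then defined by $\alpha_i(q):=\alpha_{i-1}\bigl(\Psi_{i,i-1}(q)\bigr)$ (or its $\alpha_{i+2}$-analogue in the other case), and agreement with the previously constructed $\alpha_i$ on the complement of $H$ follows from the cocycle identity \eqref{2} satisfied by the gluing data. The principal obstacle is the tropical verification: confirming that the two rays produced above genuinely lie in a region already covered by an extension of $\alpha_{i-1}$ (or $\alpha_{i+2}$), which reduces to an explicit computation with the symplectic affine structure on $B_0$ together with one further application of Lemma \ref{shearing lem} to enlarge the relevant chart domains as needed.
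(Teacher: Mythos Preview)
Your route differs from the paper's and contains a genuine gap. The paper does not go back to $\alpha_{i-1}$; it stays on the counter-clockwise side and uses $\alpha_{i+2}$ and $\alpha_{i+3}$, whose base regions $\mathfrak{Trop}_{i+2}^{-1}(U_{i+2})$ and $\mathfrak{Trop}_{i+3}^{-1}(U_{i+2})$ sit directly over the chamber containing $\mathfrak{Trop}_i(H)$. Writing $A=H$, the paper computes the indeterminacy loci $B$ of $\Psi_{i,i+2}$ and $C$ of $\Psi_{i,i+3}$ and verifies $A\cap B\cap C=\{z^{\gamma_{i+1}}=z^{\gamma_{i+2}}=-1\}\subseteq\mathfrak{Trop}_i^{-1}(0)$, so every point of $H\setminus\mathfrak{Trop}_i^{-1}(0)$ is reached by one of the two forward transitions.

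Your observation that on $H\setminus\mathfrak{Trop}_i^{-1}(0)$ one has $\mbox{val}(z^{\gamma_i})\neq 0$, hence $z^{\gamma_i}\neq -1$ and $\Psi_{i,i-1}$ is regular, is correct and clean. The gap is the step you flag yourself: you never verify that $\mathfrak{Trop}_{i-1}\bigl(\Psi_{i,i-1}(q)\bigr)$ lands in the region of $\mathbb{R}^2_{i-1}$ over which $\alpha_{i-1}$ has already been defined. The relevant points of $H$ have $\mbox{val}(z^{\gamma_i})<0$ (the ray $\{\omega(\gamma_{i+1})=0\}$ inside $\overline{U_i\cup U_{i+1}\cup U_{i+2}}$ lies in $U_{i+2}$, where $\omega(\gamma_i)<0$ by Lemma~\ref{central charge II}), and for this sign your own tropical computation places the image over a ray that is \emph{not} in $\overline{U_{i-1}\cup U_i\cup U_{i+1}}$; one is pushed to the clockwise extension of $\alpha_{i-1}$ (toward $U_{i-3}$), which in turn has its own hole. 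Your fallback to $\alpha_{i+2}$ then requires exactly the kind of explicit bad-locus computation the paper performs, so the sketch is not yet a proof. (Incidentally, the formula $\Psi_{i,i-1}(X,Y)=(X/(Y(1+X)),X)$ appears to have the $(1+X)$ factor inverted relative to the paper's conventions in \eqref{mutation}; this does not affect regularity on $H$ but does change the tropical image you compute.)

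In short: the forward charts $\alpha_{i+2},\alpha_{i+3}$ are chosen in the paper precisely because their domains already sit over $U_{i+2}$, making the inclusion check reduce to the algebraic identity $A\cap B\cap C\subseteq\mathfrak{Trop}^{-1}(0)$. Your backward approach trades this for a tropical verification that you have not carried out.
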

\begin{proof}
   Let $W_i$ be small neighborhood of (a component of ) $\partial U_i^+$ such that $\{1+z^{\gamma_{i+1}}=0\}\subseteq \mathfrak{Trop}_i^{-1}(W_i)$. 
   Notice that from Lemma \ref{shearing lem}, we have that 
     $ \mathfrak{Trop}\left(\alpha_i(\mathfrak{Trop}_i^{-1}(W_i))\right)\subseteq U_{i+2}$.
   We will show that  
   \begin{align}\label{inclusion4}
        \alpha_i\big(\mathfrak{Trop}_i^{-1}(W_i)\big)\subseteq \alpha_{i+1}\big(\mathfrak{Trop}_{i+1}^{-1}(U_{i+1}^+)\big)\cup \alpha_{i+2}\big(\mathfrak{Trop}_{i+2}^{-1}(U_{i+2})\big)
        \cup \alpha_{i+3}\big(\mathfrak{Trop}_{i+3}^{-1}(U_{i+2})\big). 
   \end{align} 
   From the earlier discussion, we have 
   \begin{align*}
      \alpha_i\left(\mathfrak{Trop}_i^{-1}(W_i)\setminus \{1+z^{\gamma_{i+1}}=0\}\big)\subseteq \alpha_{i+1}\big(\mathfrak{Trop}_{i+1}^{-1}(U_{i+1}^+)\right).
   \end{align*}
  From the earlier discussion, we have 
   \begin{align}
    &  \Psi_{i+1,i+2}: \mathfrak{Trop}_{i+1}^{-1}(U_{i+2})\cong \mathfrak{Trop}_{i+2}^{-1}(U_{i+2}) \notag \\
     & \Psi_{i+3,i+2}: \mathfrak{Trop}_{i+3}^{-1}(U_{i+2})\cong  \mathfrak{Trop}_{i+2}^{-1}(U_{i+2}). 
   \end{align} Recall that $\Psi_{i,j}=\alpha_j^{-1}\circ\alpha_i$. 
   It suffices to check that
   \begin{align}
A=\{1+z^{\gamma_{i+1}}=0\}\subseteq \Psi_{i+2,i}\big(\mathfrak{Trop}_{i+2}^{-1}(U_{i+2})\big) 
\cup \Psi_{i+3,i}\big(\mathfrak{Trop}_{i+3}^{-1}(U_{i+2})\big)
   \end{align} as subsets of $(\mathbb{G}_m^{an})^2_i$. 
 Straight calculation shows that \begin{align*}
   \Psi_{i,i+2}:& \mathfrak{Trop}_i^{-1}(W_i) \rightarrow \mathfrak{Trop}_{i+2}(U_{i+2}) \\
   z^{\gamma}\mapsto & z^{\gamma}(1+z^{\gamma_{i+2}})^{\langle\gamma,\gamma_{i+2}\rangle} \left(1+\frac{z^{\gamma_{i+1}}}{1+z^{\gamma_{i+2}}} \right)^{\langle \gamma,\gamma_{i+2}\rangle}
 \end{align*} Since $\langle\gamma,\gamma_{i+2}\rangle>0$ and $\langle\gamma,\gamma_{i+1}\rangle>0$ over $U_{i+2}$. We have $\Psi_{i,i+2}$ is not defined only on
 \begin{align*}
    B= \{ 1+z^{\gamma_{i+2}}=0\} \cup \{1+z^{\gamma_{i+1}}+z^{\gamma_{i+2}}=0\}.
 \end{align*} Therefore, we have $\alpha_i$ can be extended over $\mathfrak{Trop}_i^{-1}(W_i)\setminus B$. 
 Similarly, $\Psi_{i,i+3}$ is defined except 
 \begin{align*}
    C= \{1+z^{\gamma_{i+3}}=0\} \cup \{1+z^{\gamma_{i+2}}+z^{\gamma_{i+3}}=0\} \cup \{1+z^{\gamma_{i+1}}+z^{\gamma_{i+2}}+z^{\gamma_{i+3}}=0\}. 
 \end{align*} Therefore, $\alpha_i$ can be extended over $\mathfrak{Trop}_i^{-1}(W_i)\setminus C$. It is easy to check that $A\cap B\cap C=\{z^{\gamma_{i+1}}=z^{\gamma_{i+2}}=-1\}\subseteq \mathfrak{Trop}^{-1}(0)$. Since $\Psi_{i,j}=\alpha_j^{-1}\circ \alpha_i$ and thus the extension is compatible. Now the lemma is proved. 
\end{proof}
For the same reason, one can extend the inclusion in the clockwise direction 
 \begin{align}\label{inclusion2}
    \alpha_i\big( \mathfrak{Trop}_i^{-1}(\overline{U_i\cup U_{i-1}\cup U_{i-2}})\big)\subseteq \check{X}.
\end{align} Notice that $l_{\gamma_{i+3}}=l_{\gamma_{i-2}}$ is the the boundary of both $U_{i+2}$ and $U_{i-2}$.    
Then \eqref{inclusion1}\eqref{inclusion2} together imply the inclusion 
\begin{align}\label{inclusion3}
  \alpha_i\big(\mathfrak{Trop}_i^{-1}(\mathbb{R}^2\backslash l_{\gamma_{i+3}})\big)\subseteq \check{X}.
\end{align}
Then  Lemma \ref{cancellation} below guarantees that the inclusion extends over the ray $l_{\gamma_{i+3}}$ and we reach an extension 
\begin{align*}
   \alpha_i: \mathfrak{Trop}_i^{-1}(\mathbb{R}^2\setminus \{0\} )\rightarrow \check{X}. 
\end{align*}
Finally we claim that $\alpha_i$ is an embedding restricting on $\mathfrak{Trop}^{-1}(U)$ for small enough open subset $U\subseteq \mathbb{R}^2$. On the other hand, $\alpha_i$ is fibre-preserving with respect to $\mathfrak{Trop}_i:(\mathbb{G}_m^{an})^2\rightarrow \mathbb{R}^2$ and $\mathfrak{Trop}:\check{X}\rightarrow B$ and the induced map on the base is piecewise-linear. Direct computation shows that induced map on the base is injective. Therefore, $\alpha_i$ is an embedding. Therefore $\check{X}$ has a partial compactification $\bigcup_{i=1}^5 (\mathbb{G}_m^{an})^2_i/\sim$, with the identification $\Psi_{i,j}:(\mathbb{G}_m^{an})^2_i\rightarrow (\mathbb{G}_m^{an})^2_j$.
The following lemma is due to Gross-Siebert \cite{GS} and we leave the proof in the appendix for self-containedness.
\begin{lem} \label{cancellation}
  The composition of the wall-crossing transformations cancel out the monodromy. Explicitly,
  \begin{align*}
     \mathcal{K}_{\gamma_5} \mathcal{K}_{\gamma_4}\mathcal{K}_{\gamma_3}\mathcal{K}_{\gamma_2}\mathcal{K}_{\gamma_1} (z^{\partial \gamma})=z^{M^{-1}(\partial \gamma)}.
  \end{align*} 
\end{lem}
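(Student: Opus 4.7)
The plan is to verify this identity by a direct computation in the basis $\gamma_1, \gamma_2$ of $H_2(X, L_u)$. Both sides are $\Lambda$-algebra automorphisms of $\Lambda[z^{\pm \gamma_1}, z^{\pm \gamma_2}]$, so it suffices to compare their action on the two generators $z^{\gamma_1}$ and $z^{\gamma_2}$. The right-hand side is read off directly from the monodromy: using $M\gamma_i = \gamma_{i+1}$ together with the boundary relation $\gamma_{i+2} = \gamma_{i+1} - \gamma_i$, one inverts to obtain $M^{-1}(\gamma_1) = \gamma_1 - \gamma_2$ and $M^{-1}(\gamma_2) = \gamma_1$, so the goal reduces to showing
\begin{equation*}
(\mathcal{K}_{\gamma_5}\mathcal{K}_{\gamma_4}\mathcal{K}_{\gamma_3}\mathcal{K}_{\gamma_2}\mathcal{K}_{\gamma_1})(z^{\gamma_1}) = z^{\gamma_1 - \gamma_2}, \qquad (\mathcal{K}_{\gamma_5}\mathcal{K}_{\gamma_4}\mathcal{K}_{\gamma_3}\mathcal{K}_{\gamma_2}\mathcal{K}_{\gamma_1})(z^{\gamma_2}) = z^{\gamma_1}.
\end{equation*}

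The computational content is the pentagon identity for the $A_2$ cluster scattering diagram, encoded by the five rays $\gamma_1, \gamma_2, \gamma_3 = \gamma_2 - \gamma_1, \gamma_4 = -\gamma_1, \gamma_5 = -\gamma_2$. With $\langle \gamma_1, \gamma_2\rangle = -1$ inherited from $\langle \gamma_1', \gamma_2'\rangle = 1$, each elementary transformation $\mathcal{K}_{\gamma_i}(z^\gamma) = z^\gamma(1 + z^{\gamma_i})^{\langle \gamma, \gamma_i\rangle}$ can be applied step by step, and the product of the five wall functions collapses via the classical pentagon relation
\begin{equation*}
\mathcal{K}_{\gamma_2}\mathcal{K}_{\gamma_1} = \mathcal{K}_{\gamma_1}\mathcal{K}_{\gamma_1 + \gamma_2}\mathcal{K}_{\gamma_2}
\end{equation*}
(up to sign conventions on the pairing), leaving only the pure monodromy action. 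Conceptually, one is evaluating the path-ordered product once around the singularity in the canonical scattering diagram associated to a focus-focus point of rank two, and the claimed equality is exactly the consistency of the scattering diagram at the origin for the $A_2$ case, as established by Gross-Siebert.

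The main obstacle is purely bookkeeping: one must track the signs $\langle \gamma, \gamma_i\rangle$ carefully as $\gamma$ crosses each successive wall, and handle the fact that $\gamma_4, \gamma_5$ point in the directions $-\gamma_1, -\gamma_2$, so that the wall functions in the last two factors contain $z^{-\gamma_1}$ and $z^{-\gamma_2}$ rather than positive monomials. Nevertheless the calculation is finite and mechanical: after applying $\mathcal{K}_{\gamma_1}$ and $\mathcal{K}_{\gamma_2}$, the intermediate monomials acquire rational factors of the form $(1 + z^{\gamma_1})^{\pm 1}$ and $(1 + z^{\gamma_2} + z^{\gamma_1 + \gamma_2})^{\pm 1}$ which are precisely cancelled by the subsequent factors $\mathcal{K}_{\gamma_3}, \mathcal{K}_{\gamma_4}, \mathcal{K}_{\gamma_5}$, producing the clean monomial answer predicted by $M^{-1}$.
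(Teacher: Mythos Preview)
Your proposal is correct and follows essentially the same route as the paper's first argument: a direct step-by-step computation of the five wall-crossings on generators, using $\langle\gamma_1,\gamma_2\rangle=-1$ and the explicit list $\gamma_3=\gamma_2-\gamma_1$, $\gamma_4=-\gamma_1$, $\gamma_5=-\gamma_2$, with the pentagon identity as the underlying mechanism. The paper also records a second, more structural argument you might find cleaner: it decomposes the monodromy as $M=M_1M_2$ with $M_1,M_2$ the Picard--Lefschetz transformations for the vanishing cycles $\gamma_1',\gamma_2'$, uses the meta-identity $\mathcal{K}_{-\gamma}\mathcal{K}_{\gamma}(z^{\gamma'})=z^{M^{-1}\gamma'}$ for a single focus-focus singularity, and then rewrites
\[
\mathcal{K}_{\gamma_5}\mathcal{K}_{\gamma_4}\mathcal{K}_{\gamma_3}\mathcal{K}_{\gamma_2}\mathcal{K}_{\gamma_1}
=\bigl(\mathcal{K}_{-\gamma_2'}\mathcal{K}_{\gamma_2'}\bigr)\bigl(\mathcal{K}_{\gamma_2'}^{-1}\mathcal{K}_{\gamma_1'}\mathcal{K}_{\gamma_1'+\gamma_2'}\mathcal{K}_{\gamma_2'}\mathcal{K}_{-\gamma_1'}^{-1}\bigr)\bigl(\mathcal{K}_{-\gamma_1'}\mathcal{K}_{\gamma_1'}\bigr),
\]
where the middle bracket is the identity by the pentagon relation, so the whole product is manifestly $z^{\gamma}\mapsto z^{(M_1M_2)^{-1}\gamma}$. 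This avoids tracking the intermediate rational factors you mention and makes the appearance of $M^{-1}$ transparent rather than a coincidence of cancellation.
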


\begin{rmk}
  One would naturally expect that the family Floer mirror of the hyperK\"ahler rotation of $X'_t$ still compactifies to the del Pezzo surface of degree five. In this case, there is only two families of holomorphic discs in each of the singularities and one can glue the local model in \cite{KS1}*{Section 8} and get a partial compactification of the family Floer mirror. The authors will compare it with the Gross-Siebert construction of the mirror in the future work. 
\end{rmk}
   

  \begin{remark}
   Shen, Zaslow and Zhou prove the homological mirror symmetry for the $A_2$ cluster variety featuring the canonical equivariant $\mathbb{Z}_5$-action \cite{SZZ}.
  \end{remark}

 \subsection{Comparison with GHK Mirror of $dP_5$} \label{comparison w/ GHK}
 Let $\YGHK$ be the del Pezzo surface of degree five and $\DGHK$ be the anti-conical divisor consists of wheel of five rational curves. 
 Here we will explain the comparison of the family Floer mirror of $X_{II}$ with the GHK mirror of $(\YGHK,\DGHK)$. Recall that in Lemma \ref{affine identification}, we identify the integral affine structures on $B_0$ and $B_{\GHK}$. Moreover, the BPS rays naturally divide $B_0$ into cones which is exactly the cone decomposition of $B_{\GHK}$.  
The canonical scattering diagram in this case is computed in \cite{GHK}*{Example 3.7} and all the $\mathbb{A}^1$- curves are shown in Figure \ref{fig:dp5curve}. 

\begin{lem}\label{identification space}
  There exists a homeomorphism $X_{II}\cong \YGHK\setminus \DGHK$.
\end{lem}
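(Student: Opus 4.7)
The plan is to leverage the affine identification already established in Lemma \ref{affine identification}. By construction, $X_{II}$ and $X'_{II}=Y'_{II}\setminus D'_{II}$ share the same underlying smooth four-manifold, because hyperK\"ahler rotation changes the complex and symplectic structures but not the diffeomorphism type. It therefore suffices to produce a homeomorphism between $X'_{II}$ and $Y_{\GHK}\setminus D_{\GHK}$.

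First I would put compatible almost toric Lagrangian fibrations on both sides over the same integral affine base $B_{\GHK}$. On $X'_{II}$ the special Lagrangian fibration from Section \ref{section: setup} does the job, and Lemma \ref{affine identification} identifies its base with $B_{\GHK}$. For $Y_{\GHK}\setminus D_{\GHK}$ one constructs a matching almost toric fibration by starting from a model with two focus-focus singular fibres placed on a common eigenray and colliding them into a single singular fibre whose local monodromy is of order six and equal to \eqref{monodromy II}. This collision is the symplectic counterpart of the complex-geometric deformation used in Theorem \ref{399}, where the two $I_1$ fibres of $Y'_t$ near the origin merge into a cuspidal fibre of type $II$.

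Next I would invoke the reconstruction principle for almost toric four-manifolds in the style of Symington: the total space of an almost toric fibration over a planar integral affine base is determined up to fibre-preserving diffeomorphism by the affine base, the singular-fibre local models, and the branch-cut data. Plugging in $B_{\GHK}$ together with the local monodromy \eqref{monodromy II} and the branch cut in Figure \ref{fig:1branchcut}, this yields a diffeomorphism, hence a homeomorphism, between $X'_{II}$ and $Y_{\GHK}\setminus D_{\GHK}$.

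The main obstacle I foresee is verifying that the collision of two focus-focus singularities into a single singular fibre of type $II$ produces the same germ of almost toric fibration on the two sides; this should reduce to comparing the local asymptotic analysis of Lemma \ref{central charge II} against the standard almost toric local model at a $II$-type cusp, but writing it out carefully is the only nontrivial point. A shortcut that bypasses this step is to invoke Remark \ref{analytic difficulty}: the wild character variety description of $Y_{\GHK}\setminus D_{\GHK}$ carries a Hitchin-type hyperK\"ahler metric whose twistor rotation is biholomorphic to $X'_{II}$, so the two surfaces are tautologically the same smooth four-manifold equipped with two different complex structures, which immediately yields the desired homeomorphism.
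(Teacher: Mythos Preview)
Your main approach --- constructing almost toric fibrations on both sides with a single type $II$ singular fibre and invoking Symington --- has a real gap, and you correctly locate it. Symington's reconstruction theorem is formulated for almost toric fibrations whose singular fibres are nodal (focus-focus, Kodaira type $I_1$); a cuspidal fibre of type $II$ falls outside that framework, and there is no off-the-shelf ``type $II$ local model'' to plug into the statement. Likewise, colliding two focus-focus points into a cusp on the $Y_{\GHK}\setminus D_{\GHK}$ side is not a standard almost toric operation, so the obstacle you flag is not merely a verification but new theory you would have to develop.

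The paper sidesteps this entirely by working with the deformation rather than with $X_{II}$ itself. From the explicit Weierstrass equation, a small deformation $X_t$ of $X_{II}$ (diffeomorphic to $X_{II}$) has two $I_1$ fibres whose vanishing cycles have intersection number $1$. On the other side, $Y_{\GHK}$ is realized as the blow-up of a del Pezzo surface of degree $7$ at two non-toric boundary points; gluing Auroux's local model \cite{A4}*{Example 3.1.2} at each blow-up to the toric moment-map fibration yields a torus fibration on $Y_{\GHK}\setminus D_{\GHK}$ with exactly two nodal fibres whose vanishing cycles also pair to $1$. Now both sides are honest almost toric four-manifolds with matching focus-focus data, and the comparison is routine. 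The point is that the paper never confronts the type $II$ cusp in the fibration picture.

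Your shortcut via Remark \ref{analytic difficulty} does give a correct one-line proof: Boalch's identification of $Y_{\GHK}\setminus D_{\GHK}$ with a moduli space of flat connections, together with the Hitchin hyperK\"ahler structure whose rotation is $X'_{II}$, shows the two are literally the same smooth manifold. This is a genuinely different and much shorter route. The trade-off is that it imports a substantial external result, whereas the paper's argument stays elementary (torus-fibration topology plus the Auroux model) and, as a by-product, also pins down the torus fibre class in $H_2$ --- something the paper uses immediately afterward in setting up the diagram \eqref{identification of short exact sequence}.
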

\begin{proof}
   From the explicit equation in Section \ref{section: setup}, a deformation of $X_{II}$ has two singular fibres of type $I_1$ and the vanishing cycles have intersection number $1$. On the other hand, \cite{A4}*{Example 3.1.2} provides the local model of Lagrangian fibration near the blow-up of a point on the surface. Since $\YGHK$ can be realized as the blow up of two non-toric boundary points on del Pezzo surface of degree $7$, One can topologically glue the pull-back of the moment map torus fibration with the local Lagrangian fibration to get a torus fibration on $\YGHK\setminus \DGHK$ with two nodal fibres such that the vanishing cycles have intersection $1$. This gives the homeomorphism between $X_{II}$ and $\YGHK\setminus \DGHK$ topologically and the identification of the class of tori among $H_2(X_{II},\mathbb{Z})\cong H_2(\YGHK\setminus \DGHK,\mathbb{Z})$. In particular, we can use $\YGHK$ as an auxiliary topological compactification of $X_{II}$. 
\end{proof}
For the rest of this subsection, we will prove Theorem \ref{main thm}. 
\begin{proof}(of Theorem \ref{main thm} for the case $X_{II}$)
We will take $P= \mathrm{NE}(Y)$ in the Gross-Hacking-Keel construction.
We have $P^{gp}\cong \mbox{Pic}(Y)^*\cong H_2(Y,\mathbb{Z})$, where the first isomorphism comes from the Poincare duality and $Y$ being projective while the second isomorphism comes from $H^{1,0}(Y)=H^{2,0}(Y)=0$ . The rank two lattice $H_1(L_u,\mathbb{Z})$ glues to a local system of lattices over $B_0$ and is naturally identified with $\Lambda_{B_0}$ by Remark \ref{rk:affine}.
 Then we have the commutative diagram except the middle map. Here $\underline{H_2(Y,\mathbb{Z})}$ denotes the constant sheaf with fibre $H_2(Y,\mathbb{Z})$ over $B_0$, $\Gamma$ (and $\Gamma_g$) is the local system of lattices over $B_0$ with fibre $H_2(Y,L_u;\mathbb{Z})$ ($H_1(L_u)$ respectively) over $u\in B_0$.
 \begin{align}\label{identification of short exact sequence}
     \xymatrix{
    0 \ar[r] & \underline{H_2(Y,\mathbb{Z})}\ar[r] \ar[d]^{\cong}& \Gamma\ar[r]_{\partial} \ar[d]^{\Psi} & \Gamma_g\ar[d]^{\cong}\ar[r] & 0     \\ 
     0 \ar[r] & \underline{P^{gp}}\ar[r] & \mathcal{P} \ar[r]^{r} & \Lambda_{B_0}\ar[r]&0
     }
 \end{align} Notice that the bottom short exact sequence is \eqref{eqn:exact seq}. 
Next we will construct the middle map $\Psi$. 
Recall that $D_i^2<0$, by a theorem of Grauert \cite{G8}, one can contract $D_i$ to an orbifold singularity locally modeled by a neighborhood of the origin in $\mathbb{C}^2/D_i^2$. Since the blow up of $\mathbb{C}^2/D_i^2$ is the total space of $\mathcal{O}_{\mathbb{P}^1}(D_i^2)$, a neighborhood of $D_i$ is biholomorphic to a neighborhood of the zero section in $\mathcal{O}_{\mathbb{P}^1}(D_i^2)$. 
Therefore, a neighborhood of $D$ is covered by charts $W_i=\{(x_i,y_i)\in \mathbb{C}^2||x_iy_i|<1\}$ such that 
\begin{enumerate}
    \item $(Y,D)$ is modeled by $(W_i,\{x_iy_i=0\})$ near a node $D_i\cap D_{i+1}$
    \item $D_i=\{x_i=0\}$ and $D_{i+1}=\{y_i=0\}$
    \item $x_{i+1}=y_i^{-1}, y_{i+1}=x_iy_i^{-D_i^2}\big(1+O(|x_i|,|y_i|)\big)$.
\end{enumerate} Notice that $N_{D_i/Y}\cong \mathcal{O}_{\mathbb{P}^1}(D_i^2)$, the last equation comes from the transition functions for $\mathcal{O}_{\mathbb{P}^1}(D_i^2)$. 
 The torus fibre in $Y\setminus D$ near the node $D_i\cap D_{i+1}$ is isotopic to $L=\{|x_i|=|y_i|=\frac{1}{2}\}$. It is easy to see that $L$ bounds two families of discs $\{|x_i|\leq \frac{1}{2}, y_i=const\}$ and $\{x_i=const, |y_i|\leq \frac{1}{2}\}$. Let $\beta_i\in H_2(Y,L)$ be the relative class of the disc intersecting exactly once with $D_i$ and represented by a $2$-chain $b_i$.  
Over the simply connected subset $U_i\subseteq B_0$, both of the short exact sequence in \eqref{identification of short exact sequence} splits (non-canonically) and we define the middle map by $\Psi(\beta_i)=\phi_{i}(v_i)$. From Remark \ref{rk:affine}, the right hand side square commutes and 
$\partial \beta_i$ (up to parallel transport) generate $H_1(L_u,\mathbb{Z})$.  To see that the middle map is independent of $i$ and the left hand side square commutes, one has the following observation: We may choose $u$ to be in a neighborhood of $D_i$, which is diffeomorphic to $N_{D_i/Y}\cong \mathcal{O}_{\mathbb{P}^1}(D_i^2)$. The relation $x_{i+1}=y_{i}^{-1}, y_{i+1}=x_iy_i^{-D_i^2}\big(1+O(|x_i|,|y_i|)\big)$ translates to $\partial \beta_{i-1}+D_i^2\partial \beta_i+\partial \beta_{i+1}=0\in H_1(L,\mathbb{Z})$, which is the analogue of  \eqref{affine relation}. Therefore, there exists a $2$-chain $C$ in $L$ such that $C\cup b_{i-1}\cup D_i^2b_{i}\cup b_{i+1}$ is a $2$-cycle in the neighborhood of $D_i$. To lift the relation \eqref{affine relation} to $\Gamma$, notice that the $2$-cycle falls in $Y\setminus \cup_{j\neq i-1,i,i+1}D_j$, which is homeomorphic to the total space of $\mathcal{O}_{\mathbb{P}^1}(D_i^2)$, which has its second homology generated by $[D_i]$. Therefore, the $2$-cycle must be a multiple of $[D_i]$. On the other hand, the intersection number of this $2$-cycle with $D_{i-1},D_{i+1}$ are both $1$ from the explicit representative chosen and thus 
\begin{align} \label{QQ}
  \beta_{i-1}+D_i^2 \beta_i+\beta_{i+1}= [D_i],
\end{align} which is exactly the analogue of \eqref{lifting of affine relation} and defines the gluing relation in $\mathcal{P}$. We also remark that $H_2(Y,\mathbb{Z})$ is generated by $[D_i]$. Thus, we have the left hand side square of \eqref{identification of short exact sequence} also commutes and $\Psi$ is an isomorphism from the five lemma. 

 \begin{figure}[H]
  \centering
\def\svgwidth{220pt}
    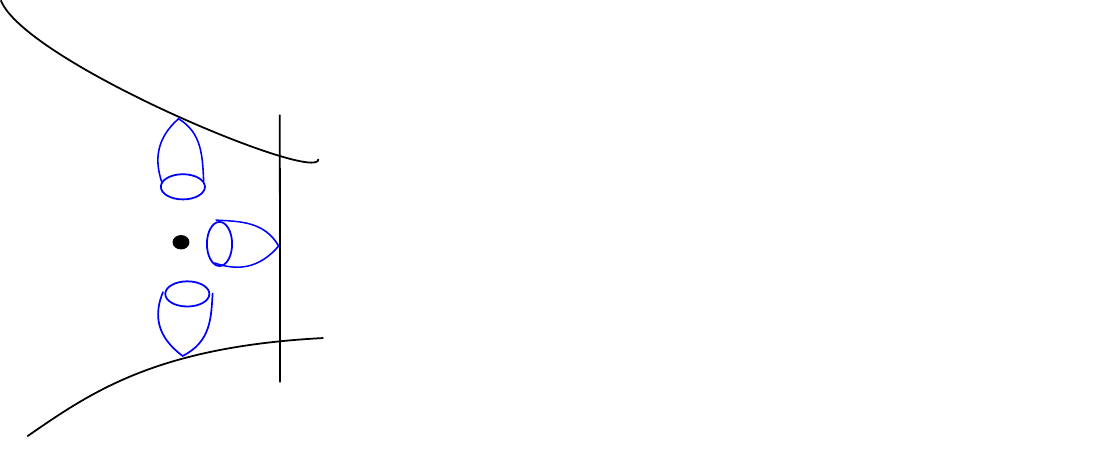
    \caption{Illustration for \eqref{QQ}.} \label{QQ'}
\end{figure}
Notice that $\beta_i+\gamma_i$ defines a $2$-cycle up to a multiple of the fibre. Since the fibre is contractible in $Y$, we may view $\beta_i+\gamma_i$ as a $2$-cycle in $H_2(Y,\mathbb{Z})$. Since $[E_i]$ is the unique class with intersections $[E_i].[D_j]=\delta_{ij}$, we have $z^{[E_i]-\phi_{i}(v_i)}$ identified with $ z^{\gamma_i}$ (see Figure \ref{fig:curves}).
\begin{figure}[H]
\centering
\def\svgwidth{180pt}
\begingroup%
  \makeatletter%
  \providecommand\color[2][]{%
    \errmessage{(Inkscape) Color is used for the text in Inkscape, but the package 'color.sty' is not loaded}%
    \renewcommand\color[2][]{}%
  }%
  \providecommand\transparent[1]{%
    \errmessage{(Inkscape) Transparency is used (non-zero) for the text in Inkscape, but the package 'transparent.sty' is not loaded}%
    \renewcommand\transparent[1]{}%
  }%
  \providecommand\rotatebox[2]{#2}%
  \newcommand*\fsize{\dimexpr\f@size pt\relax}%
  \newcommand*\lineheight[1]{\fontsize{\fsize}{#1\fsize}\selectfont}%
  \ifx\svgwidth\undefined%
    \setlength{\unitlength}{345.42510025bp}%
    \ifx\svgscale\undefined%
      \relax%
    \else%
      \setlength{\unitlength}{\unitlength * \real{\svgscale}}%
    \fi%
  \else%
    \setlength{\unitlength}{\svgwidth}%
  \fi%
  \global\let\svgwidth\undefined%
  \global\let\svgscale\undefined%
  \makeatother%
  \begin{picture}(1,0.70705314)%
    \lineheight{1}%
    \setlength\tabcolsep{0pt}%
    \put(0,0){\includegraphics[width=\unitlength,page=1]{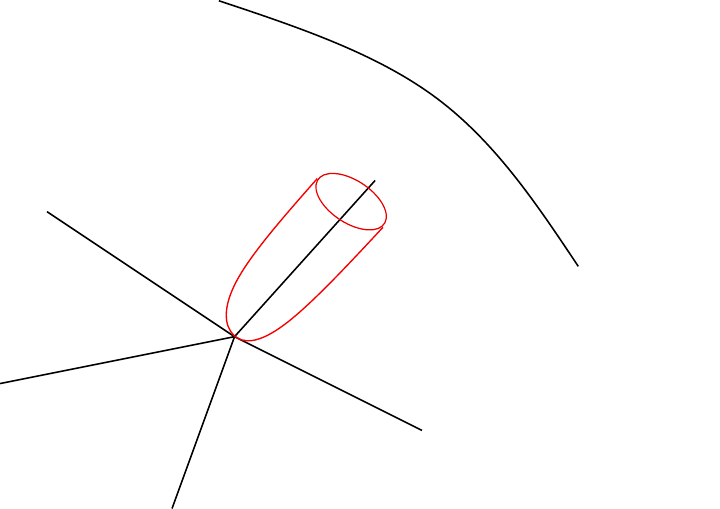}}%
    \put(0.74928992,0.26091947){\color[rgb]{0,0,0}\makebox(0,0)[lt]{\lineheight{1.25}\smash{\begin{tabular}[t]{l}$D_i$\end{tabular}}}}%
    \put(0.29333,0.39119336){\color[rgb]{0,0,0}\makebox(0,0)[lt]{\lineheight{1.25}\smash{\begin{tabular}[t]{l}$\gamma_i$\end{tabular}}}}%
    \put(0.41274816,0.55403656){\color[rgb]{0,0,0}\makebox(0,0)[lt]{\lineheight{1.25}\smash{\begin{tabular}[t]{l}$\beta_i$\end{tabular}}}}%
    \put(0,0){\includegraphics[width=\unitlength,page=2]{holocurves.pdf}}%
  \end{picture}%
\endgroup%

    \caption{The class $[E_i]$ decomposes into sum of $\gamma_i$ and $\beta_i$} \label{fig:curves}
\end{figure}
In particular, the transformation $\Psi_{i,i+1}$
coincides with the corresponding one in the canonical scattering diagram. This will leads to the identification of $\check{X}$ and the GHK mirror of $(\YGHK,\DGHK)$ as gluing of tori. Notice that the Gross-Hacking-Keel mirror of $(Y,D)$ comes with a family over $\mbox{Spec}\mathbb{C}[\mathrm{NE}(Y)]$.  
We will have to determine which particular point in $\mbox{Spec}\mathbb{C}[\mathrm{NE}(Y)]$ the family Floer mirror $\check{X}$ corresponds to. Notice that the monodromy sends $\gamma_i$ to $\gamma_{i+1}$. This implies that $\check{X}$ corresponds to the point such that the all values of 
$z^{[E_i]}$ coincide. From the explicit relation of curve classes $[E_i]$, $\check{X}$  corresponds to the point where $z^{[D_i]}=z^{[E_i]}=1$.

One can also see this from the identification $\check{X}$ with the subset of the analytification of del Pezzo surface of degree 5,
which is the cluster variety of type $A_2$ (see Section \ref{comparison A_2}).
Recall that the Gross-Hacking-Keel mirror is determined by the algebraic equations \eqref{theta function dp5} from the theta functions  \cite{GHK}*{Equation (3.2)}, 
 \begin{align*}
    \vartheta_{i-1}\vartheta_{i+1}=z^{[D_i]}(\vartheta_i+z^{[E_i]}).
 \end{align*}
 Comparing with \eqref{eq:rank2eq} (and later \eqref{theta function dp5}), we see that the family Floer mirror $\check{X}$ corresponds to the fibre with 
 \begin{align*}
     z^{[D_i]}=z^{[E_i]}=1.
 \end{align*}
 \end{proof}
Here we remark that the complex structure of the fibre defined by $z^{[D_i]}=z^{[E_i]}=1$ is expected to be mirror to the del Pezzo surface of degree five with monotone symplectic structure. However, the verification seems hard due to the analytic difficulty explained in Remark \ref{analytic difficulty}.
  \begin{figure}
  \centering
\def\svgwidth{220pt}
\begingroup%
  \makeatletter%
  \providecommand\color[2][]{%
    \errmessage{(Inkscape) Color is used for the text in Inkscape, but the package 'color.sty' is not loaded}%
    \renewcommand\color[2][]{}%
  }%
  \providecommand\transparent[1]{%
    \errmessage{(Inkscape) Transparency is used (non-zero) for the text in Inkscape, but the package 'transparent.sty' is not loaded}%
    \renewcommand\transparent[1]{}%
  }%
  \providecommand\rotatebox[2]{#2}%
  \newcommand*\fsize{\dimexpr\f@size pt\relax}%
  \newcommand*\lineheight[1]{\fontsize{\fsize}{#1\fsize}\selectfont}%
  \ifx\svgwidth\undefined%
    \setlength{\unitlength}{745.3323965bp}%
    \ifx\svgscale\undefined%
      \relax%
    \else%
      \setlength{\unitlength}{\unitlength * \real{\svgscale}}%
    \fi%
  \else%
    \setlength{\unitlength}{\svgwidth}%
  \fi%
  \global\let\svgwidth\undefined%
  \global\let\svgscale\undefined%
  \makeatother%
  \begin{picture}(1,0.97945429)%
    \lineheight{1}%
    \setlength\tabcolsep{0pt}%
    \put(0,0){\includegraphics[width=\unitlength,page=1]{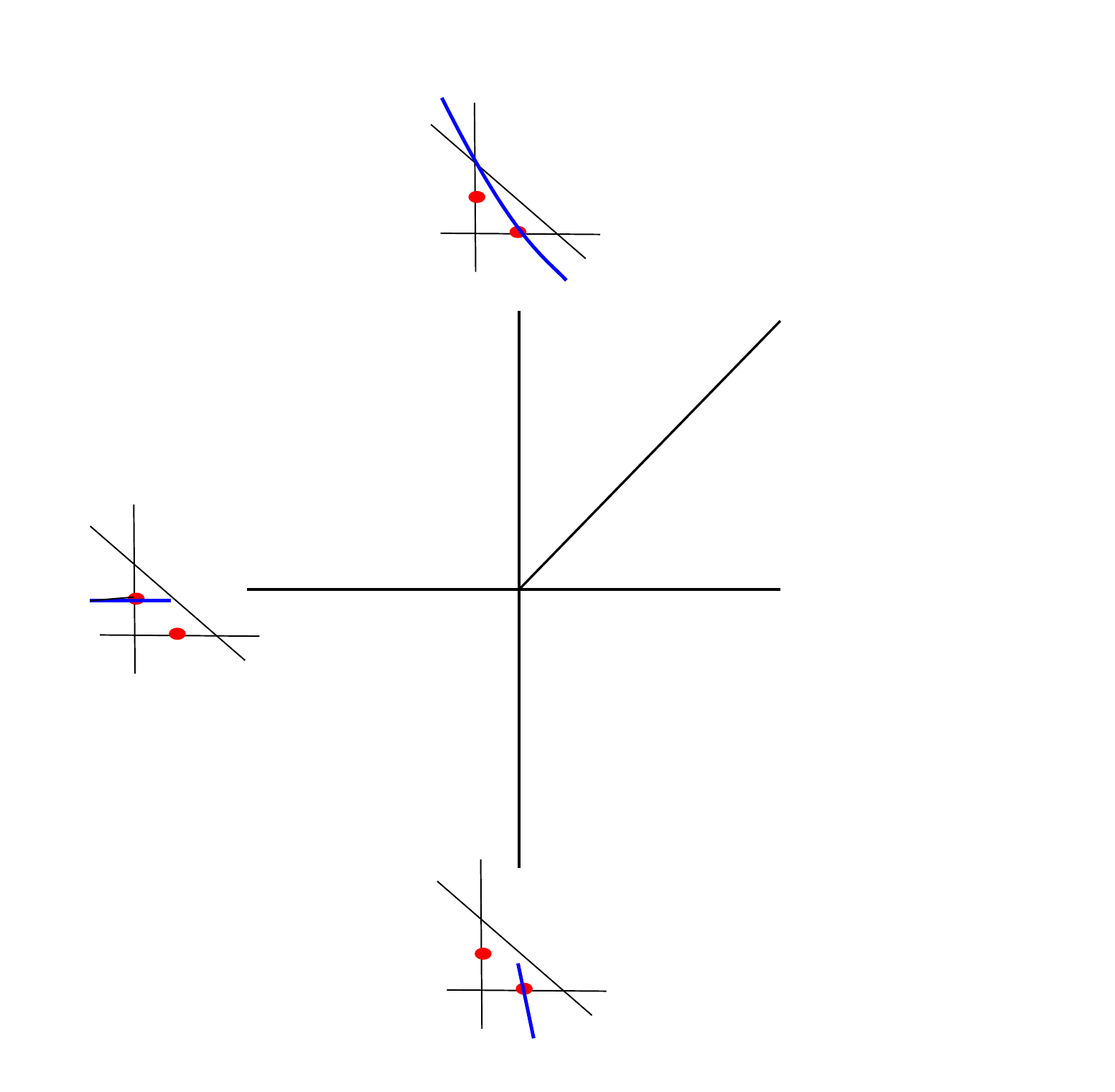}}%
    \put(0.4971546,0.01012156){\color[rgb]{0,0,0}\makebox(0,0)[lt]{\lineheight{1.25}\smash{\begin{tabular}[t]{l}$\small{E_y}$\end{tabular}}}}%
    \put(-0.00380788,0.33867484){\color[rgb]{0,0,0}\makebox(0,0)[lt]{\lineheight{1.25}\smash{\begin{tabular}[t]{l}$E_x$\end{tabular}}}}%
    \put(0,0){\includegraphics[width=\unitlength,page=2]{dp5curve.pdf}}%
    \put(0.77582861,0.31915687){\color[rgb]{0,0,0}\makebox(0,0)[lt]{\lineheight{1.25}\smash{\begin{tabular}[t]{l}$H-E_x$\end{tabular}}}}%
    \put(0.28896962,0.94073677){\color[rgb]{0,0,0}\makebox(0,0)[lt]{\lineheight{1.25}\smash{\begin{tabular}[t]{l}$H-E_y$\end{tabular}}}}%
    \put(0.66972186,0.83555634){\color[rgb]{0,0,0}\makebox(0,0)[lt]{\lineheight{1.25}\smash{\begin{tabular}[t]{l}$H-E_x-E_y$\end{tabular}}}}%
  \end{picture}%
\endgroup%

    \caption{The canonical scattering diagram and the $\mathbb{A}^1$-curves in del Pezzo surfaces of degree $5$ (illustrated by a projection to $\mathbb{P}^2$). } \label{fig:dp5curve}
\end{figure}
 We will show in Section \ref{comparison A_2} the following result:  
 \begin{thm}\label{mirror II}
     The analytification of $\cX$-cluster variety of type $A_2$ or the Gross-Hacking-Keel mirror of $(Y,D)$ is a partial compactification of the family Floer mirror of $X_{II}$. 
 \end{thm}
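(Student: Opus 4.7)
The plan is to compute both mirrors explicitly and match them step by step: identify the BPS rays of the SYZ fibration on $X_{II}$, compute their wall-crossing transformations, construct $\check{X}$ as a gluing of (extensions of) the resulting torus charts, and then identify this gluing with the analytification of the Gross-Hacking-Keel mirror and with the $\mathcal{X}$-cluster variety by matching the affine manifolds, the rays, and the wall functions.

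First I would combine Theorem \ref{local discs} with the split attractor flow mechanism of Corollary \ref{split attractor flow} to show that the only relative classes supporting nontrivial open Gromov-Witten invariants are the six classes $\pm\gamma_1'$, $\pm\gamma_2'$, $\pm(\gamma_1'+\gamma_2')$. Since the order-six monodromy acts transitively on this set, and $Z_{\gamma_6}$ analytically continues across the branch cut to $Z_{\gamma_1}$, there are exactly five BPS rays $l_{\gamma_1},\dots,l_{\gamma_5}$, each a symplectic affine line by Lemma \ref{straight}, and each carrying the polynomial wall function $f_{\gamma_i}=1+T^{\omega(\gamma_i)}z^{\partial\gamma_i}$. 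Using the primitive tangents computed in Lemma \ref{central charge II} together with the relation \eqref{boundary relation}, which lifts to \eqref{lifting of affine relation} via the identification of short exact sequences \eqref{identification of short exact sequence}, I would then identify $B_0$ with its cone decomposition $\{U_i\}$ with $(B_{\mathrm{GHK}}, \Sigma)$ for the pair $(Y,D)$ where $Y$ is the del Pezzo surface of degree five and $D$ is an anti-canonical cycle of five $(-1)$-curves.

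The family Floer mirror $\check{X}=\bigcup_i \mathcal{U}_i/\sim$ is by construction a gluing of affinoid domains over a fine cover of $B_0$. The main technical step, and the principal obstacle, is to show that each inclusion $\alpha_i:\mathfrak{Trop}_i^{-1}(U_i)\hookrightarrow \check{X}$ extends to $\mathfrak{Trop}_i^{-1}(\mathbb{R}^2\setminus\{0\})$, so that $\check{X}$ becomes a gluing of five full rigid analytic tori. I would extend counter-clockwise through $U_{i+1}$ and $U_{i+2}$ using the polynomial mutation formula \eqref{mutation}, which defines an analytic automorphism of $(\mathbb{G}_m^{an})^2$ wherever $1+z^{\gamma_{i+1}}\neq 0$; extend clockwise by the same argument; and fill in the codimension-one hole along $\{1+z^{\gamma_{i+1}}=0\}$ by routing through a further chart as in Lemma \ref{filling the hole}. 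Crucially, the extension closes up consistently after crossing all five walls exactly because the composition of the five wall-crossings cancels the monodromy (Lemma \ref{cancellation}), via the focus-focus decomposition of the type $II$ singularity and the pentagon identity. This step depends essentially on the wall functions being polynomial.

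With the extensions in hand, $\check{X}$ is realized as the gluing of five copies of $(\mathbb{G}_m^{an})^2$ along birational maps. Matching these with the Gross-Hacking-Keel gluing under the middle vertical arrow in \eqref{identification of short exact sequence} (which sends $z^{\gamma_i}$ to $z^{[E_i]-\varphi_i(v_i)}$), and invoking the explicit canonical scattering diagram of \cite{GHK}*{Example 3.7}, realizes $\check{X}$ as a Zariski-open subscheme of the analytification of a fiber of the GHK family over $\mathrm{Spec}\,\mathbb{C}[\mathrm{NE}(Y)]$. The specific fiber is pinned down by the monodromy: since $M$ permutes the classes $\gamma_i$ cyclically, the values of $z^{[E_i]}$ must all coincide, forcing $z^{[D_i]}=z^{[E_i]}=1$. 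Under this normalization the theta-function relations of \cite{GHK} reduce to \eqref{eq:rank2eq} with $d_1=d_2=1$, which is precisely the defining relation of the $\mathcal{X}$-cluster variety of type $A_2$; the discrepancy of branch-cut conventions between the two descriptions is accounted for as in Section \ref{sec: cluster}.
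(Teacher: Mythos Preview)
Your proposal is correct and follows essentially the same route as the paper: the paper's proof of Theorem \ref{mirror II} is precisely the content of Sections \ref{section: construction}, \ref{comparison w/ GHK}, and \ref{comparison A_2}, and you have accurately reconstructed its architecture---identifying the five BPS rays via Theorem \ref{local discs} and Corollary \ref{split attractor flow}, extending each chart $\alpha_i$ to a full analytic torus through Lemmas \ref{shearing lem}, \ref{filling the hole}, and \ref{cancellation}, matching with the GHK mirror via the identification \eqref{identification of short exact sequence} and $z^{\gamma_i}\leftrightarrow z^{[E_i]-\varphi_i(v_i)}$, and pinning down the fiber $z^{[D_i]}=z^{[E_i]}=1$ by the monodromy. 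One small correction: the branch-cut manipulation that relates the family Floer gluing to the $A_2$ cluster gluing is carried out in Section \ref{comparison A_2} (via the decomposition $M=M_1M_2$ and Figure \ref{fig:dp5 branch cut'}), not in Section \ref{sec: cluster}.
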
 
\begin{rmk}
	Since here we do not include the singular fibre for the family Floer mirror, the family Floer mirror is missing $\mathfrak{Trop}^{-1}(0)\subseteq (\mathbb{G}_m^{an})^2$ in each rigid analytic torus. 
\end{rmk}

 \subsection{Comparison with $A_2$-Cluster Variety} \label{comparison A_2}
In this section, we will prove that the family Floer mirror constructed in Section \ref{section: construction} is simply the $\cX$-cluster variety of type $A_2$.
The $\cX$-cluster algebra of type $A_2$ are defined in Section \ref{sec: cluster} with $d_1= d_2 =1$.
The following observation helps to link the scattering diagram in Theorem \ref{913} and the scattering diagram of type $A_2$.

The operation below can be viewed as a symplectic analogue of ``pushing singularities to infinity" in \cite{GHK}. Recall that if one has a special Lagrangian fibration with a focus-focus singularity at $u_0$ and Lefschetz thimble $\gamma$. Then locally there exist two affine rays $l_{\pm\gamma}$ emanating from $u_0$ on the base, parametrizing special Lagrangian fibres bounding holomorphic discs in classes $\pm\gamma$ \cite{A}. Then $l_{\pm \gamma}$ divide a neighborhood of $u_0$ into two chambers $U_{\pm}$, where $U_{\pm}$ is characterized by $\int_{\pm \gamma}\mbox{Im}\Omega>0$. The corresponding wall crossings across $l_{\pm\gamma}$ from $U_-$ to $U_+$ are $\mathcal{K}_{\pm\gamma}$ and the monodromy around $u_0$ is given by $M$ in Claim \ref{focus-focus}. We make a branch cut from $u_0$ to infinity and the monodromy by $M$ when crossing the branch cut. Notice that the three transformations $\mathcal{K}_{\pm\gamma}$ and $M$ commute. If we choose the cut to coincide with $l_{-\gamma}$, then the transformation crossing $l_{-\gamma}$ from $U_-$ to $U_+$ is $K_{\gamma}$, which coincides with the transformation crossing $l_{\gamma}$ from $U_-$ to $U_+$. Similarly, if we choose the cut to coincide with $l_{\gamma}$, then the transformation crossing $l_{\gamma}$ from $U_+$ to $U_-$ is $K_{-\gamma}$, which coincides with the transformation crossing $l_{-\gamma}$ from $U_+$ to $U_-$.

 To sum up, choosing the branch cut coinciding with $l_{-\gamma}$ makes the transformation across $l_{\pm\gamma}$ from $U_-$ to $U_+$ both equal to $\mathcal{K}_{\gamma}$, as if the singularity $u_0$ is moved to infinity along $l_{-\gamma}$. Similarly, if we choose the branch cut coincides with $l_{\gamma}$, then the transformation from $U_-$ to $U_+$ is $K_{-\gamma}$ as if the singularity is moved to infinity along $l_{\gamma}$. 
 
 Now back to the scattering diagram in Theorem \ref{913}. We can express the underlying integral affine structure on $B_0$ in a different way by choosing different branch cuts. First we decompose $M=M_1M_2$, where $M_1,M_2$ are the Picard-Lefschetz transformations with vanishing cycles $\gone,\gtwo$. Choose the branch cut to be $l_{\gamma_1}$ (and $l_{\gamma_5}$) with the corresponding identifications to be $M_1$ (and $M_2$ respectively) as in Figure \ref{fig:dp5 branch cut'}. Then from the previous discussion in this section and the same argument in Section \ref{section: construction}, the family mirror is thus gluing of five tori with the gluing coincide with those of the $A_2$-cluster variety $\check{X}_{\mathbb{C}}$.

 \begin{figure}[H]
\centering
\begin{tikzpicture}
\draw (0,0)--++(45:2) node[above right] {$l_{\gamma_2}$};
\draw (0,0)--++(90:2) node[above] {$l_{\gamma_3}$};
\draw (0,0)--++(135:2)node[left] {$ l_{\gamma_4}$};
\draw (0,0)--++(-45:2) node[right] {$l_{\gamma_1}$};
\draw (0,0)--++(-135:2) node[left] {$ l_{\gamma_5}$} ;
\filldraw(0,0) circle(1pt) ;
\draw[thick, ->] ([shift={(270:.3)}] -45:1.3)  arc (270:360:.3) node[above]{$M_1$};
\draw[thick, ->] ([shift={(180:.3)}] -135:1.3)  arc (180:310:.3) node[below right]{$M_2$};
\draw[color=blue, snake=snake,
segment amplitude=.5mm,
segment length=2mm]  (0,0)--++(-135:2);
\draw[color=blue, snake=snake,
segment amplitude=.5mm,
segment length=2mm]  (0,0)--++(-45:2);
\end{tikzpicture}
\caption{The different choice of branch cuts for $X_{II}$.}
\label{fig:dp5 branch cut'}
\end{figure}

Note that one can similarly define theta function in the analytic situation. Since we are working with finite type, we can express theta functions in different torus charts by path ordered products. The functions are well defined since the scattering diagram is consistent (see Lemma \ref{cancellation}). Further note that, in the finite case, we can replicate \eqref{eq:rank2eq} to define multiplications between theta functions without broken lines.\footnote{In general, the products of  theta functions can be expressed as the linear combination of theta functions \cites{GHK, GHKK}, which the coefficients can be computed via broken lines.} 
 Standard and straight-forward calculation shows that  
 \begin{align} \label{theta function dp5}
     \vartheta_{v_{i-1}} \cdot \vartheta_{v_{i+1}} &= 1+\vartheta_{v_{i}},
\end{align} 
where $v_i$ denotes the primitive generator of $l_{\gamma_i}$ $i \in \{1, \dots, 5\}$ ordered cyclically.
We can see it agrees with the exchange relations as in Section \ref{RES}. 
This gives a natural embedding of $\check{X}_{\mathbb{C}}$ into $\mathbb{P}^5$ after suitable homogenization of \eqref{theta function dp5} thus compactified to a del Pezzo surface of degree five. 


 \section{Family Floer Mirror of $X_{III}$}\label{section:dp6}
 In this section, we will consider the case when $Y'=Y'_{III}$ is a rational elliptic surface with singular configuration $III^*III$, $D'$ is the type $III^*$ fibre. We claim that the family Floer mirror of $X=X_{III}$ is then the del Pezzo surface of degree $6$. The argument is similar to that in Section \ref{section: dp5}.
 
 First of all, such $Y'$ has the explicit affine equation
    \begin{align*}
      y^2=x^4+u.
    \end{align*}
 It is easy to see that the fibre over $u=0$ is a singular fibre of type $III$, while the fibre at infinity is of type $III^*$. There is a natural deformation $Y'_t$ given by the minimal resolution of the surface 
   \begin{align*}
       \{ z^2y^2=x^4+4t^2x^2z^2+uz^4\} \subseteq \mathbb{P}^2_{(x:y:z)}\times \mathbb{P}^1_{(t:u)}
   \end{align*} such that there are two singular fibres of type $I_1, I_2$ with near $u=0$, $|t|\ll 1$. With vanishing thimbles $\gone$ and $\gtwo,\gthree$.
By Theorem \ref{399}, we have the analogue of Theorem \ref{913}. 
 \begin{thm}\cite{L12}*{Theorem 4.12} \label{BPS ray III}
   There exist $\gone,\gtwo,\gthree \in  H_2(X,L_u)\cong \mathbb{Z}^3$ such that $\langle \gone,\gtwo \rangle=\langle \gone,\gthree\rangle=1$, $\langle \gtwo,\gthree\rangle =0$ and $Z_{\gtwo}=Z_{\gthree}$. Moreover, if we set 
\[\gamma_1=-\gone,\ \gamma_2=\gtwo,\ \gamma_3=\gone+\gtwo+\gthree,\ \gamma_4=\gone+\gtwo,\ \gamma_5=\gone, \gamma_6=-\gthree.\] Then
	\begin{enumerate}
	    \item $f_{\gamma}(u)\neq 1$ if and only if $u\in l_{\gamma_i}$ and $\gamma=\gamma_i$ for some $i\in\{1,\cdots, 6\}$ .
	    \item In such cases, 
	    \begin{align*}
       f_{\gamma_i}=\begin{cases}1+T^{\omega(\gamma_i)}z^{\partial\gamma_i} & \mbox{ if $i$ odd,} \\
       (1+T^{\omega(\gamma_i)}z^{\partial\gamma_i})^2 & \mbox{ if $i$ even.}
       \end{cases}
	    \end{align*}
	\item If we choose the branch cut between $l_{\gamma_1}$ and $l_{\gamma_6}$, then the counter-clockwise monodromy $M$ across the branch cut is given by 
	\begin{align}
     \gone &\mapsto -(\gone+\gtwo+\gthree) \notag \\
     \gtwo &\mapsto \gone+\gtwo \notag \\
     \gthree &\mapsto \gone +\gthree.
	\end{align}
	\end{enumerate}
 \end{thm}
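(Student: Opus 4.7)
The plan is to adapt the argument used to prove Theorem \ref{913} for $X_{II}$ to the present $III^*III$ configuration. The essential new feature is that the type $III$ singular fibre at $u = 0$ deforms into a type $I_1$ together with a type $I_2$ fibre under the explicit family $Y'_t$ written above. The $I_2$ fibre contributes \emph{two} vanishing thimbles whose central charges coincide and whose boundary classes in $H_1(L_u,\mathbb{Z})$ agree, and this is what produces the squared wall functions on the even rays.

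First, I would extract the local data. Let $\gamma_1' \in H_2(X_t, L_u)$ be the thimble of the $I_1$ fibre and $\gamma_2', \gamma_3' \in H_2(X_t, L_u)$ the two thimbles of the $I_2$ fibre. From the local Milnor picture of an $I_2$ degeneration (two rational curves meeting in two nodes), both thimbles collapse simultaneously, so $Z_{\gamma_2'} = Z_{\gamma_3'}$, and $\langle \gamma_2', \gamma_3'\rangle = 0$ since the two nodes are geometrically disjoint. The intersections $\langle \gamma_1', \gamma_2'\rangle = \langle \gamma_1', \gamma_3'\rangle = 1$ come from direct computation in the Milnor lattice of the type $III$ singularity and are preserved under the deformation. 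One also records the key fact $\partial\gamma_2' = \partial\gamma_3' \in H_1(L_u, \mathbb{Z})$, which holds in any $I_n$ fibre because all vanishing cycles are homologous in the smooth torus.

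Second, I would invoke Theorem \ref{399} to transfer the local open Gromov--Witten invariants back to $X = X_0$. The local computation of \cite{L12} attaches nontrivial $\tilde\Omega$ to $\pm d \gamma_1'$ (from the $I_1$) and to $\pm d \gamma_2', \pm d \gamma_3'$ (from the $I_2$). Then the split attractor flow mechanism, exactly as in the proof of Corollary \ref{split attractor flow}, combined with Gromov compactness and the non-intersection of BPS rays (analogue of Lemma \ref{514}), rules out any BPS class outside the monodromy orbit of $\{\pm \gamma_1', \pm \gamma_2', \pm \gamma_3'\}$.

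Third, I would determine the finite list of rays globally. A direct check of the Picard--Lefschetz transformations associated to $\gamma_1', \gamma_2', \gamma_3'$ shows that the counterclockwise monodromy $M$ has order $4$ and acts on $\langle \gamma_1', \gamma_2', \gamma_3'\rangle$ as stated in part (3). Running the asymptotic analysis of Lemma \ref{central charge II} in this setting, but now with $M^4 = \mathrm{id}$, yields $Z_\gamma(u^4) = cu$ and hence central charges of the form $c_k u^{3/4}$. This places the BPS loci at $6$ distinct angular positions in $B_0$ (the coincident central charges $Z_{\gamma_2'} = Z_{\gamma_3'}$ produce a single ray that carries both thimbles). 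Labelling them cyclically counterclockwise from the positive real axis yields the stated identifications $\gamma_1 = -\gamma_1',\ \gamma_2 = \gamma_2',\ \gamma_3 = \gamma_1'+\gamma_2'+\gamma_3',\ \gamma_4 = \gamma_1'+\gamma_2',\ \gamma_5 = \gamma_1',\ \gamma_6 = -\gamma_3'$, and the choice of branch cut between $l_{\gamma_1}$ and $l_{\gamma_6}$. On an even ray $l_{\gamma_{2k}}$, the two $I_2$-type contributions share the same boundary class and symplectic area, so their wall functions multiply to $(1 + T^{\omega(\gamma_{2k})} z^{\partial\gamma_{2k}})^2$; the odd rays carry only one thimble contribution and so give the linear wall function.

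The main obstacle, as in the $X_{II}$ case, is carrying out the split attractor flow rigorously to close off all potential extra BPS classes. The added complication here is that the two thimbles of the $I_2$ fibre must be tracked separately (to account for the squared wall function) but together geometrically (since they share central charge and boundary), so the bookkeeping in the attractor flow must distinguish ``relative classes'' from ``rays on the base.'' Everything else is a local adaptation of the $X_{II}$ calculation.
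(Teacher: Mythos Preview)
Your proposal is correct and matches the paper's approach. The paper itself does not give a self-contained proof of this statement; it cites \cite{L12}*{Theorem 4.12} and simply remarks that ``By Theorem \ref{399}, we have the analogue of Theorem \ref{913},'' after writing down the explicit deformation $Y'_t$ splitting the type $III$ fibre into $I_1+I_2$. Your write-up fills in exactly the details this sentence is pointing to: identify the thimbles $\gone$ (from $I_1$) and $\gtwo,\gthree$ (from $I_2$), transfer the local invariants via Theorem \ref{399}, and globalise by split attractor flow as in Corollary \ref{split attractor flow}. The paper's subsequent Lemma \ref{central charge III} confirms your $u^{3/4}$ asymptotic and the six-ray picture, and your bookkeeping remark about tracking $\gtwo,\gthree$ as distinct relative classes sharing a common ray is precisely why the even wall functions are squared.
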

 Notice that from the condition $Z_{\gtwo}=Z_{\gthree}$, we have $l_{\gtwo}=l_{\gthree}$ and $l_{\gone+\gtwo}=l_{\gone+\gthree}$. 
 Then we compute the central charges $Z_{\gamma_i}$, which is parallel to Lemma \ref{central charge II}. Taking the branch cut between $l_{\gamma_1}$ and $l_{\gamma_6}$, we would obtain the diagram as in Figure \ref{fig:b2}.
 
\begin{figure}[H]
\centering
\begin{tikzpicture}
\draw[->] (0,0)--++(300:2) node[right] {$\gamma_1 = -\gone$};
\draw[->] (0,0)--++(0:2) node[right] {$\gamma_2 =  \gtwo$};
\draw[->] (0,0)--++(45:2)node[above] {$ \gamma_3 = \gone+2 \gtwo$};
\draw[->] (0,0)--++(90:2) node[left] {$\gamma_4 = \gone +\gtwo$};
\draw[->] (0,0)--++(135:2) node[left] {$ \gamma_5 = \gone$} ;
\draw[->] (0,0)--++(180:2) node[below] {$ \gamma_6 = -\gtwo$} ;
\filldraw(0,0) circle(1pt) ;
\draw[dashed] (0,0)--++(240:2) ;
\end{tikzpicture}
\caption{BPS rays near the singular fibre in $X_{III}$.}  \label{fig:b2}
\end{figure}

 \begin{lem} \label{central charge III}
   With suitable choice of coordinate $u$ on $B_0\cong \mathbb{C}^*$, we have 
    \begin{align}
      Z_{\gamma_k}(u)=\begin{cases} e^{\pi i (k-1)\frac{3}{4}}u^{\frac{3}{4}} & \mbox{if $k$ odd,}\\
     \frac{1-i}{2}e^{\pi i (k-2)\frac{3}{4}} u^{\frac{3}{4}} & \mbox{if $k$ even.}
      \end{cases}
    \end{align}
     In particular, the angle between $l_{\gamma_k}$ and $l_{\gamma_{k+1}}$ is $\frac{\pi}{3}$. See how the BPS rays position as demonstrated in Figure {\ref{fig:b2}}.
 \end{lem}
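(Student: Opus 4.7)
The plan is to adapt the argument of Lemma \ref{central charge II} to the type $III$ singular fiber, replacing the local model $y^2 = x^3 + u$ by $y^2 = x^4 + u$. Near $u=0$ the smooth fiber $L_u$ is (the compactification of) a hyperelliptic curve branched over the four points $x_j = (-u)^{1/4} \zeta^j$, $j = 0,1,2,3$, with $\zeta = e^{\pi i/2}$, and $\Omega' = f(u) du \wedge \frac{dx}{y}$ for some nowhere-vanishing holomorphic function $f$.

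First I would reproduce the asymptotic calculation of \eqref{asymptotic}. Choosing representative $2$-chains $\gamma_{i,j}$ fibered over line segments connecting pairs of branch points, parametrized by $x(s) = s\zeta^i (-u)^{1/4} + (1-s)\zeta^j (-u)^{1/4}$, one has $dx = O(u^{1/4})$ while $(x^4+u)^{1/2} = O(u^{1/2})$. Hence the inner integrand is $O(u^{-1/4})$, and $\int_0^{u_0} u^{-1/4}\,du = O(u_0^{3/4})$. This gives $Z_{\gamma_{i,j}}(u) = C_{i,j} u^{3/4} + O(|u|)$ with constants $C_{i,j}$ whose integer span contains enough linearly independent entries to ensure $Z_{\gamma_k}(u) = C_k u^{3/4} + O(|u|)$ with $C_k \neq 0$ for each of the six classes.

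Having established the $u^{3/4}$ scaling, I would rescale the holomorphic coordinate $u$ so that $Z_{\gamma_1}(u) = u^{3/4}$. The analytic continuation identity $Z_{M\gamma}(u) = Z_\gamma(ue^{2\pi i})$ together with $M\gamma_1 = \gamma_3$ (a direct check from the monodromy formulas in Theorem \ref{BPS ray III}) then forces $Z_{\gamma_3}(u) = e^{3\pi i/2}u^{3/4}$, and iterating yields the odd-index formula $Z_{\gamma_{2m+1}}(u) = e^{3\pi i m/2}u^{3/4}$. For the even indices, I would combine the class identity $\gamma_1 + \gamma_3 = \gtwo + \gthree$ with the degeneracy $Z_{\gtwo} = Z_{\gthree}$ from Theorem \ref{BPS ray III}: this yields the period equation $Z_{\gamma_1} + Z_{\gamma_3} = 2Z_{\gamma_2}$ and hence $Z_{\gamma_2} = \frac{1-i}{2} u^{3/4}$. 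Applying the monodromy to $\gamma_2$ and iterating (using $M\gamma_2 = \gamma_4$, $M\gamma_4 = \gamma_6$) produces the stated formula for all even $k$. The $\pi/3$ angular spacing of the BPS rays is then immediate from solving $\mbox{Arg}(Z_{\gamma_k}) = 0$: one finds $l_{\gamma_k}$ at $\mbox{Arg}(u) = (k-1)\pi/3$.

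As in the proof of Lemma \ref{central charge II}, the only genuinely analytic step is the leading-order period asymptotic; the exponent $3/4$ versus $5/6$ simply reflects the different Kodaira type, and no additional input is required beyond the substitution and scaling argument already developed there. All remaining content of the lemma—the specific phase constants, the factor $\frac{1-i}{2}$, and the angular spacing—is pure linear algebra driven by the explicit monodromy matrix of Theorem \ref{BPS ray III} together with the degeneracy $Z_{\gtwo} = Z_{\gthree}$.
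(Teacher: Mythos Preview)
Your proposal is correct and follows essentially the same route as the paper: establish the $u^{3/4}$ asymptotic from the local model, normalize so that $Z_{\gamma_1}=u^{3/4}$, use $M\gamma_k=\gamma_{k+2}$ to propagate to all odd $k$, and determine the even-index constant from a linear relation among periods forced by $Z_{\gtwo}=Z_{\gthree}$. The only cosmetic difference is that the paper uses the relation $Z_{\gamma_2}+Z_{\gamma_4}=Z_{\gamma_3}$ to solve for $c$, whereas you use $Z_{\gamma_1}+Z_{\gamma_3}=2Z_{\gamma_2}$; both encode the same degeneracy and yield $c=\tfrac{1-i}{2}$.
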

 \begin{proof}
 Straight-forward calculation shows that $Z_{\gamma_k}(u)=O(|u|^{\frac{3}{4}})$. Normalize the coordinate $u$ such that $Z_{\gamma_1}(u)=u^{\frac{3}{4}}$. Notice that $M{\gamma_k}=\gamma_{k+2}$,
 the case for $k$ being odd follows immediately. Similarly,
 when $k$ is even, $Z_{\gamma_k}(u)=ce^{\pi i(k-2)\frac{3}{4}}u^{\frac{3}{4}}$, for some $c\in \mathbb{C}$.  
With $Z_{\gamma_2}+Z_{\gamma_4}=Z_{\gamma_3}$ we gets $c=\frac{1-i}{2}$.
 \end{proof}
 We will take $U_i$ be the sector bounded by $l_{\gamma_i}$ and $l_{\gamma_{i+1}}$.
 Let $\check{X}$ to be the family Floer mirror constructed by Tu \cite{T4}.
 Again we denote the embedding  by $\alpha_i:\mathfrak{Trop}_i^{-1}(U_i)\rightarrow \check{X}$.  From Lemma \ref{central charge III}, $x_i>0$ on a sector symmetric with respect to $l_{\gamma_{i}}$ and angle $\frac{2\pi}{3}\times 2$. Thus, $\alpha_i$ can be extended to $\mathfrak{Trop}_i^{-1}\bigg(\overline{\bigcup_{k=i-2}^{k=i+2}U_k}\bigg)$. Following the same line of Lemma \ref{shearing lem} and Lemma \ref{filling the hole}, $\alpha_i$ extends to $\mathfrak{Trop}^{-1}\bigg(\overline{\bigcup_{k=i-2}^{k=i+3}U_k}\bigg)$.
 Finally, $\alpha_i$ extends over $l_{\gamma_{i+4}}$ from the following analogue of Lemma \ref{cancellation}. The proof is similar and we will omit the proof.
 \begin{lem}
     The composition of the wall-crossing transformations cancel out the monodromy. Explicitly,
  \begin{align*}
    \mathcal{K}_{\gamma_6} \mathcal{K}_{\gamma_5} \mathcal{K}_{\gamma_4}\mathcal{K}_{\gamma_3}\mathcal{K}_{\gamma_2}\mathcal{K}_{\gamma_1} (z^{\gamma})=z^{M^{-1}\gamma}.
  \end{align*} 
 \end{lem}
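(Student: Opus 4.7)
The plan is to mimic the strategy of Lemma \ref{cancellation} adapted to the type $III$ geometry. The type $III$ singular fibre has three vanishing cycles $\gone, \gtwo, \gthree$, and a direct computation using the monodromy formulas in Theorem \ref{BPS ray III} shows that $M = M_{\gone} M_{\gtwo} M_{\gthree}$, where each $M_{\gamma}$ is the Picard--Lefschetz transformation $\gamma' \mapsto \gamma' + \langle \gamma, \gamma'\rangle \gamma$ associated to the thimble $\gamma$. For example, one checks $M_{\gone}M_{\gtwo}M_{\gthree}(\gone) = M_{\gone}M_{\gtwo}(\gone - \gthree) = M_{\gone}(\gone - \gtwo - \gthree) = -(\gone+\gtwo+\gthree) = M(\gone)$, and similarly on $\gtwo, \gthree$. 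Geometrically, this factorization corresponds to the splitting of the type $III$ fibre into three focus-focus fibres under the deformation $Y'_t$ from Section \ref{section: setup}.

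The new feature compared to the $X_{II}$ case is the squared wall functions $f_{\gamma_i} = (1 + T^{\omega(\gamma_i)}z^{\partial \gamma_i})^2$ for even $i$, which arise because $Z_{\gtwo} = Z_{\gthree}$ forces $l_{\gtwo}$ and $l_{\gthree}$ (and the corresponding even rays obtained by applying $M$) to coincide. Upon perturbing to $Y'_t$, each doubled ray splits into two infinitesimally separated primitive rays carrying unit wall function $1 + T^{\omega}z^{\partial}$, so each $\mathcal{K}_{\gamma_i}$ with $i$ even factors as a product of two single-thimble wall-crossings. After this splitting, the composition $\mathcal{K}_{\gamma_6}\cdots \mathcal{K}_{\gamma_1}$ is rewritten as a product of nine primitive wall-crossings, corresponding to the generic scattering picture of three focus-focus singularities with pairings $\langle \gone, \gtwo\rangle = \langle \gone, \gthree\rangle = 1$ and $\langle \gtwo, \gthree\rangle = 0$.

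Following the template of Lemma \ref{cancellation}, I would then insert auxiliary pairs $\mathcal{K}_{\alpha}^{-1}\mathcal{K}_{\alpha}$ at strategic positions to bracket the expanded product into three groups of the form $(\mathcal{K}_{-\alpha}\mathcal{K}_{\alpha}) \cdot (\textrm{middle}_{\alpha})$, one group for each thimble $\alpha \in \{\gone, \gtwo, \gthree\}$. Each middle factor is a product of primitive wall-crossings with $|\langle \cdot, \cdot\rangle| = 1$, so iterated application of the pentagon identity \eqref{pentagon} collapses it to the identity, exactly as in the second half of the proof of Lemma \ref{cancellation}. Finally, applying Claim \ref{focus-focus} to each of the three pairs yields $\mathcal{K}_{-\alpha}\mathcal{K}_{\alpha} = M_{\alpha}^{-1}$, and composing the three Picard--Lefschetz inverses in the correct order gives $M_{\gthree}^{-1}M_{\gtwo}^{-1}M_{\gone}^{-1}(z^\gamma) = z^{M^{-1}\gamma}$, which is the claimed identity.

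The main obstacle is the combinatorial bookkeeping: with three thimbles and doubled rays, the number of wall-crossings and pentagon applications is substantially larger than in the $A_2$ case, and some care is required to choose an order of insertions so that the pentagon reductions proceed cleanly (one may want to invoke, in effect, the $B_2$-type consistency of the scattering diagram, obtained from the pentagon identity applied iteratively). A safer but more computational alternative, analogous to the first method at the end of the proof of Lemma \ref{cancellation}, is to evaluate the path-ordered product directly on a basis of $H_1(L,\mathbb{Z}) \cong \mathbb{Z}^3$ and verify termwise that the action matches $M^{-1}$.
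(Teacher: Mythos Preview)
Your approach is correct and is precisely the generalization of Lemma \ref{cancellation} that the paper has in mind; in fact the paper itself simply writes ``The proof is similar and we will omit the proof,'' so your sketch supplies considerably more detail than the authors do. The factorization $M=M_{\gone}M_{\gtwo}M_{\gthree}$ on $H_2(X,L_u)$ is verified correctly, and your observation that the squared wall functions on the even rays split into two primitive wall-crossings (since $\partial\gtwo=\partial\gthree$ in $H_1(L_u)$) is exactly the mechanism that reduces the problem to iterated pentagon identities together with Claim \ref{focus-focus}, just as in the $A_2$ case.

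One small correction: in your alternative direct-computation approach you write ``a basis of $H_1(L,\mathbb{Z})\cong\mathbb{Z}^3$''. The fibre $L$ is a $2$-torus, so $H_1(L,\mathbb{Z})\cong\mathbb{Z}^2$; it is $H_2(X,L_u)$ that has rank $3$ here. The wall-crossing operators $\mathcal{K}_{\gamma_i}$ act on the variables $z^{\partial\gamma}$ and hence only see the rank-$2$ lattice $H_1(L_u)$, where $\partial\gtwo=\partial\gthree$. This is why the three Picard--Lefschetz factors collapse to $M_{\partial\gone}M_{\partial\gtwo}^2$ on $H_1$, matching the squared wall functions; your bracketing into ``three groups'' then really amounts to two brackets, one of which is itself squared (and one checks $\mathcal{K}_{-b}$ and $\mathcal{K}_{b}$ commute, so $\mathcal{K}_{-b}^2\mathcal{K}_b^2=(\mathcal{K}_{-b}\mathcal{K}_b)^2$). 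With that adjustment the argument goes through.
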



Similar to the argument of Section \ref{comparison A_2}, we may change the branch cut in Figure \ref{fig:b2} into two, as in Figure \ref{branch cuts dp6}. The explicit gluing functions of $B_2$-cluster variety can be found in \cite{thesis}*{p.54 Figure 4.1}. Then the family Floer mirror $\check{X}$ can be partially compactified to gluing of six tori (up to GAGA) with the gluing function same as the $\cX$ cluster variety of type $B_2$. 
One can compute the product of the theta functions via broken lines and obtain
\begin{align} \label{theta relation dp6}
    \vartheta_{v_1} \vartheta_{v_3} &= 1+ \vartheta_{v_2}, \notag \\
    \vartheta_{v_2} \vartheta_{v_4} &= (1+ \vartheta_{v_3})^2, \notag \\
    \vartheta_{v_3} \vartheta_{v_5} &= 1+ \vartheta_{v_4}, \notag \\
    \vartheta_{v_4} \vartheta_{v_6} &= (1+ \vartheta_{v_5})^2, \notag \\
    \vartheta_{v_5} \vartheta_{v_1} &= 1+ \vartheta_{v_6}, \notag \\
    \vartheta_{v_6} \vartheta_{v_2} &= (1+ \vartheta_{v_1})^2,
 \end{align} 
where $v_i$ denotes the primitive generator of $l_{\gamma_i}$ for $i \in \{1, \dots, 6\}$ ordered cyclically.
\begin{figure}[H]
\centering
\begin{tikzpicture}
\draw[->] (0,0)--++(0:2) node[right] {$l_{\gamma_2}$};
\draw[->] (0,0)--++(60:2) node[right] {$l_{\gamma_3}$};
\draw[->] (0,0)--++(120:2)node[above] {$l_{\gamma_4}$};
\draw[->] (0,0)--++(180:2) node[left] {$l_{\gamma_5}$};
\draw[->] (0,0)--++(230:2) node[left] {$l_{\gamma_6}$} ;
\draw[->] (0,0)--++(310:2) node[below] {$l_{\gamma_1}$} ;
\filldraw(0,0) circle(1pt) ;
\draw[color=blue, snake=snake,
segment amplitude=.5mm,
segment length=2mm]  (0,0)--++(230:2);
\draw[color=blue, snake=snake,
segment amplitude=.5mm,
segment length=2mm]  (0,0)--++(310:2);
\draw[thick, ->] ([shift={(270:.3)}] 310:1.3)  arc (260:360:.3) node[above]{$M_1$};
\draw[thick, ->] ([shift={(180:.3)}] 230:1.3)  arc (180:310:.3) node[right]{$M_2$};
\end{tikzpicture}
 \caption{The choice of a different branch cut for $X_{III}$.  }
 \label{branch cuts dp6}
\end{figure} 
Recall that \cite[Lemma 3.5]{GHK_bir} shows that the Looijenga pair comes from a non-toric blow up of a toric Looijenga pair, its Looijenga interior up to codimension two can be expressed as gluing of tori via a birational map determined by the ideal of the blow up. 
In particular, the $\mathcal{X}$ cluster variety of type $B_2$ up to codimension two can be compactified to a Looijenga pair. The relations in \eqref{theta relation dp6} indicate that the Looijenga pair is constructed by a non-toric blow up of $\mathbb{P}^2$, one point at $-1$ at the $x$-axis and two successive blow-ups at $-1$ of the $y$-axis. The geometry is an isotrivial degeneration of del Pezzo surface of degree six. 

To compare with the mirror constructed by Gross-Hacking-Keel, we take the corresponding log Calabi-Yau pair $(\YGHK,\DGHK)$ with $\YGHK$ being the del Pezzo surface of degree six. Since all del Pezzo surfaces of degree $6$ are isomorphic, we will identify it with the blow up of $\mathbb{P}^2$ at three points, two non-toric points on $y$-axis and one non-toric point on $x$-axis. The anti-canonical divisor $\DGHK$ is the proper transform of the $x,y,z$-axis of $\mathbb{P}^2$. Let $H$ be the pull-back of the hyperplane class, $E_1,$ (and $E_2,E_3$) be the exceptional divisor of the blow up on $x$-axis (and $y$-axis).  
\begin{lem} \label{affine dp6}
  There is an isomorphism of affine manifolds $B_{{\GHK}}\cong B$. 
\end{lem}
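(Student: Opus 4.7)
The plan is to follow the template of the proof of Lemma \ref{affine identification} for the $A_2$ case. First, by Remark \ref{repacement}, replace $(Y, D)$ by a toric resolution of the nodes of $D$ so that the refined boundary consists of six rational curves with cyclic self-intersection pattern $(-1, -2, -1, -2, -1, -2)$. This refinement leaves the underlying affine manifold $B_{\GHK}$ unchanged and merely refines the cone decomposition $\Sigma$ so that its six rays correspond precisely to the six BPS rays $l_{\gamma_i}$ from Theorem \ref{BPS ray III}.

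Next, on the SYZ side, pick a global complex affine chart on the complement of the branch cut of Figure \ref{fig:b2}, say with coordinates $(\check x, \check y) := (-\mathrm{Im}\,Z_{\gamma_2}, -\mathrm{Im}\,Z_{\gamma_1})$, and compute the primitive integer tangent vector of each $l_{\gamma_i}$. Since $Z_{\gamma_i}$ is real and positive along $l_{\gamma_i}$ by Remark \ref{affine line}, the tangent direction is determined by the remaining central charges via Lemma \ref{central charge III}. Direct computation yields the key identities
\begin{align*}
 Z_{\gamma_1} + Z_{\gamma_3} &= 2 Z_{\gamma_2}, & Z_{\gamma_2} + Z_{\gamma_4} &= Z_{\gamma_3}, \\
 Z_{\gamma_3} + Z_{\gamma_5} &= 2 Z_{\gamma_4}, & Z_{\gamma_4} + Z_{\gamma_6} &= Z_{\gamma_5},
\end{align*}
which translate into the GHK affine relations $v_{i-1} + D_i^2 v_i + v_{i+1} = 0$ with $D_i^2 = -1$ for odd $i$ and $D_i^2 = -2$ for even $i$. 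This matches the refined self-intersection pattern above and yields an isomorphism of affine manifolds chamber by chamber.

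Finally, handle the global monodromy as in Lemma \ref{affine identification}. Work on the universal cover of $B_0$ and use the branch cut chosen in Theorem \ref{BPS ray III} to glue the six local identifications into a single affine isomorphism. One verifies that the resulting SYZ monodromy (the pullback of $M$ from Theorem \ref{BPS ray III}) agrees with the GHK monodromy around the singular point of $B_{\GHK}$, which is of order $4$ as expected for a type $III$ singular fibre. The principal technical subtlety -- not a deep obstacle, but requiring care -- is keeping track of the alternating self-intersections together with the signs and phases of the central charges through the branch cut.
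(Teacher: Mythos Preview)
Your proposal is correct and follows essentially the same approach as the paper: pass to a toric blow-up $(\tilde Y,\tilde D)$ whose six boundary components have alternating self-intersections $-1,-2$, then use the central charge identities from Lemma~\ref{central charge III} to match the affine relations $v_{i-1}+D_i^2v_i+v_{i+1}=0$ ray by ray, and finally invoke the monodromy argument of Lemma~\ref{affine identification}. The only point worth tightening is the phrase ``toric resolution of the nodes of $D$'': blowing up each of the three nodes of $D$ once does \emph{not} yield the pattern $(-1,-2,-1,-2,-1,-2)$ (starting from self-intersections $(0,-1,1)$ for the proper transforms of the $x,y,z$-axes, one needs a specific iterated sequence of corner blow-ups, two of them on the same initial node, as the paper indicates). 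So you should either exhibit the explicit blow-up sequence or phrase it as ``choose toric blow-ups so that the resulting cycle has self-intersections $(-1,-2,\dots)$'' and note that such a choice exists.
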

\begin{proof}
   From \cite{GHK}*{Lemma 1.6}, toric blow-ups correspond to the refinements of cone decompositions but does not change the integral affine structures. We will find a successive toric blow-ups of $(\tilde{Y},\tilde{D})\rightarrow (Y,D)$ such that not only the integral affine structure with singularity constructed by Gross-Hacking-Keel coincides with $B$ but also its cone decomposition is the same as the chamber structure bounded by the BPS rays. Such $\tilde{Y}$ can be constructed as the ordered blow ups at the intersection point of the $x,z$-axis, the proper transform of the $z$-axis and the exceptional divisor, the proper transform of $y,x$-axis. Then we take $\tilde{D}$ to be the union of the pull-back of the $x,y,z$-axis. If we take the proper transform of $y$-axis as $\tilde{D}_1$ and number the boundary divisors in counter-clockwise order, then we have $\tilde{D}_i^2=-1$ if $i$ odd and $\tilde{D}_{i}^2=-2$ if $i$ even. 
   
   Use \eqref{central charge III}, we have 
    \begin{align*}
        l_{\gamma_1}=&\{\check{x}>0, \check{y}=0\}  \\
        l_{\gamma_2}=&\{\check{y}>0,\check{x}=0 \}.
    \end{align*} and we will identify $l_{\gamma_1}=\mathbb{R}_{>0}(1,0)$ with $l_{\gamma_2}=\mathbb{R}_{>0}(0,1)$ and the rest of the proof is similar to the proof of Lemma \ref{affine identification}. 
\end{proof}

By the same argument of Lemma \ref{identification space}, we have a homeomorphism between $X_{III}\cong Y\setminus D\cong \tilde{Y}\setminus \tilde{D}$ and $\tilde{Y}$ provides a compactification of $X_{III}$. For the later discussion, we will replace $(Y,D)$ by $(\tilde{Y},\tilde{D})$ for the rest of the section (see Remark \ref{repacement}). Similarly, we have the identification of the short exact sequence \eqref{identification of short exact sequence}. 

Next we need to compute the canonical scattering diagram for $(Y,D)$. Let $D_i$ be the components of $D$ with $D_i$ are exceptional curves when $i$ even. 
\begin{lem}\label{identification wall functions}
  Under the identification of integral affine structures with singularities $B\cong \BGHK$, the canonical scattering diagram of Gross-Hacking-Keel coincides with the scattering diagram in Theorem \ref{BPS ray III} via identification $z^{[C_i]-\phi_{\rho_i}(v_i)}=z^{\gamma_i}$ (or $z^{[C^j_i]-\phi_{\rho_i}(v_i)}=z^{\gamma_i}$) for $i$ is odd (or even). 
  
\end{lem}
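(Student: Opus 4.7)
The proof will parallel the analogous identification carried out for $X_{II}$ in Section~\ref{comparison w/ GHK}, with the essential new feature that half of the boundary components of $\tilde{D}$ are $(-2)$-curves. This is what produces the squared wall functions in Theorem~\ref{BPS ray III}(2), and it corresponds on the algebro-geometric side to two $\mathbb{A}^1$-curves attached to each even-indexed ray. After the toric refinement of Remark~\ref{repacement}, all $\mathbb{A}^1$-curves will be transverse to $\tilde D$, and the canonical scattering diagram can be read off directly from the sheaf $\mathcal{P}$ constructed from $\tilde{D}$.

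The first step is to construct the middle map $\Psi$ fitting into the analog of the diagram \eqref{identification of short exact sequence}, using the same local model as in Section~\ref{comparison w/ GHK}. Near each node $\tilde{D}_i\cap\tilde{D}_{i+1}$ the surface is covered by charts $\{|x_iy_i|<1\}$ with transition $x_{i+1}=y_i^{-1}, \ y_{i+1}=x_iy_i^{-\tilde{D}_i^2}$, and the fiber $L=\{|x_i|=|y_i|=\tfrac12\}$ bounds local disks $\beta_i, \beta_{i+1}$. Setting $\Psi(\beta_i)=\phi_{\rho_i}(v_i)$ and observing that
\[
  \beta_{i-1}+\tilde{D}_i^2\,\beta_i+\beta_{i+1}=[\tilde{D}_i]
\]
(the cycle $C\cup b_{i-1}\cup \tilde{D}_i^2 b_i\cup b_{i+1}$ is supported in the neighborhood of $\tilde{D}_i$, which is homotopy equivalent to $\mathcal{O}_{\mathbb{P}^1}(\tilde{D}_i^2)$, and has intersection one with each of $\tilde{D}_{i\pm 1}$) produces the lifting of the relation $\varphi_i(v_{i-1})+\varphi_i(v_{i+1})=[\tilde{D}_i]-\tilde{D}_i^2\varphi_i(v_i)$ to $\mathcal{P}$. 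By the five lemma, $\Psi$ is an isomorphism.

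Next I would enumerate the $\mathbb{A}^1$-curves hitting each ray $\rho_i$. For odd $i$, $\tilde{D}_i$ is a $(-1)$-curve and the dimension/enumerative count (cf.\ \cite{GHK}*{Example 3.7} and Theorem~\ref{399}) yields a unique transverse $\mathbb{A}^1$-curve $C_i$ with $C_i\cdot\tilde{D}_i=1$, contributing the factor $1+z^{[C_i]-\phi_{\rho_i}(v_i)}$ to the wall function along $\rho_i$. For even $i$, $\tilde{D}_i$ is a $(-2)$-curve produced by the toric refinement, and exactly two $\mathbb{A}^1$-curves $C_i^1, C_i^2$ are transverse to it (these come from the two distinct families of exceptional curves of the original non-toric blow-ups that, under the refinement of Remark~\ref{repacement}, end up crossing $\tilde{D}_i$ on opposite sides), contributing $\prod_{j=1,2}\bigl(1+z^{[C_i^j]-\phi_{\rho_i}(v_i)}\bigr)$.

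Finally, I would identify these wall functions with those of Theorem~\ref{BPS ray III}. The key point is that each $\mathbb{A}^1$-curve $C$ and the local disk $\beta_i$ can be glued along $\partial\beta_i$ to form a $2$-cycle in $\tilde{Y}$, so that $[C_i]=\gamma_i+\Psi^{-1}\phi_{\rho_i}(v_i)$ (and similarly for $C_i^j$) in $\Gamma$. This gives $z^{[C_i]-\phi_{\rho_i}(v_i)}=z^{\gamma_i}$ for $i$ odd and $z^{[C_i^j]-\phi_{\rho_i}(v_i)}=z^{\gamma_i}$ for $i$ even, and therefore the canonical wall functions become $1+z^{\gamma_i}$ and $(1+z^{\gamma_i})^2$ respectively, matching Theorem~\ref{BPS ray III}(2). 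The main obstacle is the explicit enumeration of the two $\mathbb{A}^1$-curves attached to each $(-2)$-boundary and checking that they sit in the same class in $\Gamma$; once the toric refinement of Lemma~\ref{affine dp6} is in hand, this is a bookkeeping exercise parallel to \cite{GHK}*{Example~3.7}, which can be cross-checked against the deformation of $Y'$ to a rational elliptic surface with only $I_1$ singular fibers (Remark~\ref{rmk:312}).
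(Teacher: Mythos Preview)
Your approach follows the same overall strategy as the paper's proof --- set up the identification $\Psi$ of the two short exact sequences, enumerate the $\mathbb{A}^1$-curves in $(\tilde Y,\tilde D)$, and match wall functions ray by ray --- but there is a genuine error in how you treat the even rays.

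The two $\mathbb{A}^1$-curves $C_i^1,C_i^2$ attached to an even ray do \emph{not} lie in the same class in $\Gamma$ (nor in $H_2(\tilde Y)$). For instance on $\rho_6$ the two curves are the exceptional divisors $E_2,E_3$, and on $\rho_2,\rho_4$ they are proper transforms of lines in the classes $H-E_i$ and $H-E_1-E_i$ ($i=2,3$); in each case the two classes differ by $[E_2]-[E_3]\neq 0$. So your proposed verification that ``they sit in the same class in $\Gamma$'' cannot succeed, and your description of the pair as coming from ``two distinct families of exceptional curves of the original non-toric blow-ups'' is incorrect for two of the three even rays. What actually makes the canonical wall function $\prod_j(1+z^{[C_i^j]-\phi_{\rho_i}(v_i)})$ collapse to $(1+z^{\gamma_i})^2$ is the \emph{constraint} $z^{[C_i^1]}=z^{[C_i^2]}$ on the base $\operatorname{Spec}\mathbb{C}[\mathrm{NE}(\tilde Y)]$; it is not a homological coincidence. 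On the Floer side this corresponds to the hypothesis $Z_{\gamma'_2}=Z_{\gamma'_3}$ in Theorem~\ref{BPS ray III}, which forces $\partial\gamma'_2=\partial\gamma'_3$ and $\omega(\gamma'_2)=\omega(\gamma'_3)$, hence $z^{\gamma'_2}=z^{\gamma'_3}$ as monomials in the Novikov variables even though $\gamma'_2\neq\gamma'_3$. The paper makes exactly this point (``The assumption $Z_{\gamma'_2}=Z_{\gamma'_3}$ implies that $z^{[E_2]}=z^{[E_3]}$'') and then reads off the remaining identifications from the monodromy $\gamma_i\mapsto\gamma_{i+2}$, arriving at the specific fiber $z^{[D_i]}=z^{[C_i]}=z^{[C_i^j]}=1$.

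A smaller point: the construction of $\Psi$ that occupies the first third of your argument is carried out in the paper \emph{before} this lemma (immediately after Lemma~\ref{affine dp6}), so within the lemma's proof the paper proceeds directly to the explicit degree-by-degree enumeration of $\mathbb{A}^1$-curves (exceptional divisors, then lines through two blow-up points, then lines through one blow-up point and a toric node, then ruling out higher degree), invokes the multiple-cover contribution $(-1)^{d-1}/d^2$ from \cite{GPS}, and writes down $f_i$ in closed form. Your sketch could be completed by doing this enumeration concretely and then making explicit that the identification in the lemma's statement amounts to specializing to a point of the GHK base, not to a coincidence of curve classes.
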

\begin{proof}
   We will first compute all the $\mathbb{A}^1$-curves of $(\YGHK,\DGHK)$, which is standard and we just include it for self-completeness. Any irreducible curves, in particular the irreducible $\mathbb{A}^1$ curves in $(\YGHK,\DGHK)$ are either exceptional curves of blow-up from $\mathbb{P}^2$ or proper transform of a curve $C\subseteq \mathbb{P}^2$. All the three exceptional curves are $\mathbb{A}^1$-curves intersecting  $D_i$ for $i$ odd.  
   If $C$ is of degree one and its proper transform is an $\mathbb{A}^1$-curve, then it either 
   \begin{enumerate}
       \item passes through two of the blow up points and its proper transform intersect $\tilde{D}_i$ for $i$ odd. There are three such lines. 
       \item passes through one blow up point and one intersection of toric $1$-stratum. There are three such lines and intersect $\tilde{D}_i$ for $i$ even.  
   \end{enumerate}
   There are no higher degree curves with proper transform are $\mathbb{A}^1$-curves and we draw the canonical scattering diagram and the corresponding $\mathbb{A}^1$-curves in Figure \ref{fig:dp6curve}. 
   
   Since $D\in |-K_Y|$ is ample, there is no holomorphic curves contained in $Y\setminus D$. In particular, all the simple $\mathbb{A}^1$-curves are irreducible and
   all the possible $\mathbb{A}^1$-curves are the multiple covers of the above ones. The contribution of multiple covers of degree $d$ is $(-1)^{d-1}/d^2$ by \cite{GPS}*{Proposition 6.1}. Then the lemma follows from the definition of the canonical scattering diagram \cite{GHK}*{Definition 3.3}. Then the function attached to the ray $\rho_i$ is 
   \begin{align}
   f_i=\begin{cases}
      (1+z^{[C_i]-\phi_{\rho_i}(v_i)}), & \mbox{ if $i$ is odd,} \\
       \prod_{j=1}^2 (1+z^{[C_i^j]-\phi_{\rho_i}(v_i)}), & \mbox{ if $i$ is even,} 
   \end{cases}
   \end{align}
    where $C_i,C_i^j$ are the $\mathbb{A}^1$-curve classes corresponding to $l_{\gamma_i}$  in Figure \ref{fig:dp6curve}. The assumption $Z_{\gamma_2}=Z_{\gamma_3}$ implies that $z^{E_2]}=z^{[E_3]}$.
Notice that the monodromy of the only singular fibre shifts $\gamma_i$ to $\gamma_{i+2}$. This implies that one would also need to identify 
 \begin{align*}
    z^{[E_1]}&=z^{[H-E_i]}=z^{[2H-E_1-E_2-E_3]}\\
    z^{[E_i]}&=z^{[H-E_i]}=z^{[H-E_1-E_i]}, i=2,3.
 \end{align*} Equivalently, this corresponds to 
    \begin{align*}
  z^{[D_i]}=z^{[C_i]}=z^{[C_i^j]}=1.   
 \end{align*}

\end{proof}

\begin{figure}[H]
\centering
\def\svgwidth{220pt}
    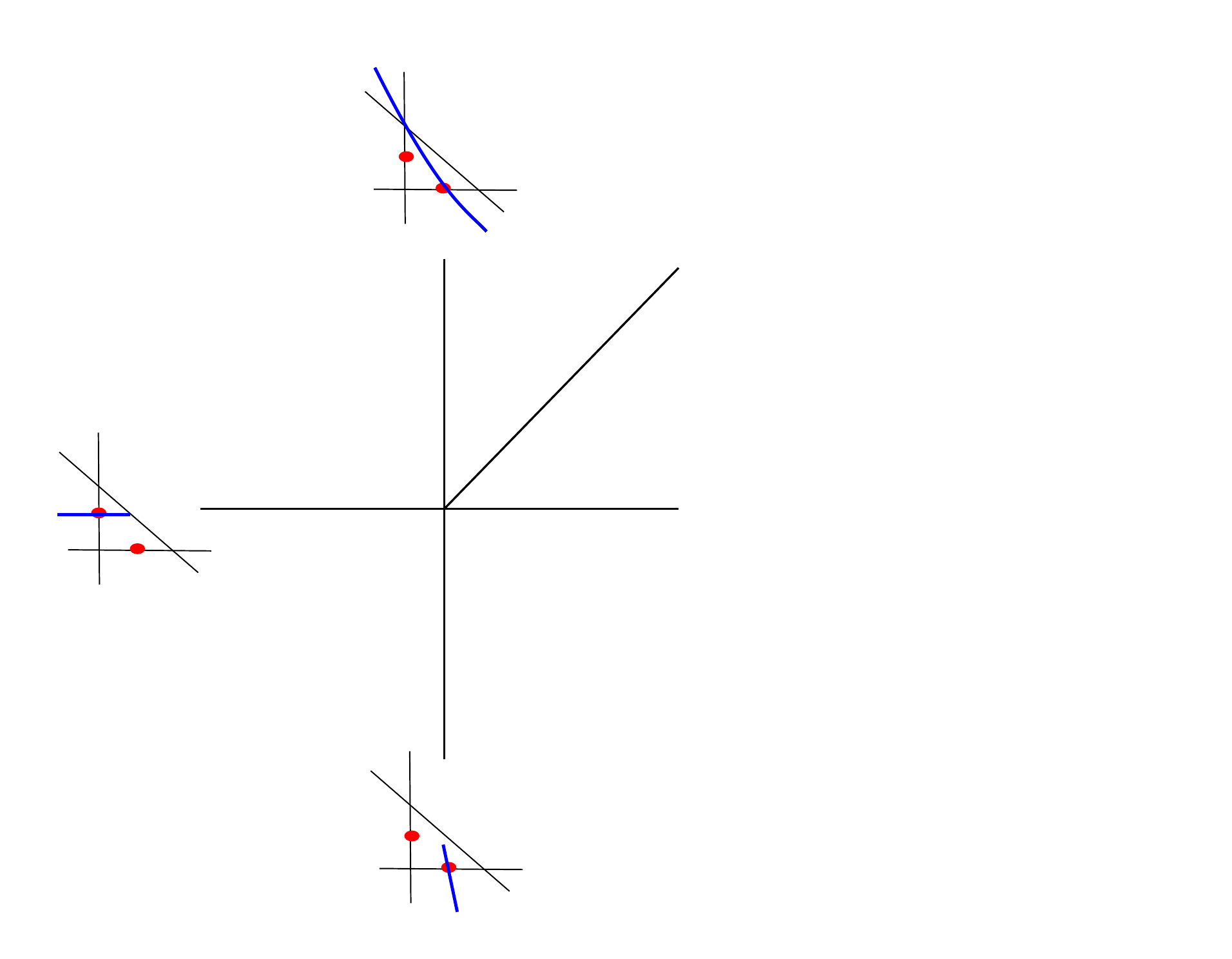
    \caption{The canonical scattering diagram and the $\mathbb{A}^1$-curves in del Pezzo surfaces of degree $6$ (illustrated by a projection to $\mathbb{P}^2$). } \label{fig:dp6curve}
\end{figure}

The GHK mirror can be computed via the spectrum of the algebra generated by theta functions. The products of the theta functions 

\begin{align*}
    \vartheta_{i-1} \vartheta_{i+1} & = z^{[D_i]} \prod_{j=1}^2 \left( \vartheta_{i} + z^{[C_i^j]}\right) \quad \text{for $i$ even,} \\
    \vartheta_{i-1} \vartheta_{i+1} & = z^{[D_i]} \left( \vartheta_{i} + z^{[C_i]}\right) \quad \text{for $i$ odd}.
\end{align*}

Again compare it with the analogue relations \eqref{theta relation dp6} from $\cX$-cluster algebra of type $B_2$, we conclude that the family Floer mirror $\check{X}$ corresponds to the particular fibre of the GHK mirror characterized by  
\begin{align*}
  z^{[D_i]}=z^{[C_i]}=z^{[C_i^j]}=1.   
 \end{align*}
Since the intersection matrix of components of $D$ is not negative semi-definite, the mirror family of $(Y,D)$ can be compactified to a family of Looijenga pairs which is deformation equivalent to $(Y,D)$ \cite[Theorem 1.8]{LZ}. From the previous discussion, the family mirror of $X$ corresponds to the unique Looijenga pair $(\check{Y},\check{D})$ with trivial periods in the mirror family. From \cite[Lemma 2.8]{GHK_tor}, such Looijenga pair $(\check{Y},\check{D})$ can be constructed explicitly: $\check{Y}$ is the blow up of $\mathbb{P}^2$ at $-1$ on the $x$-axis and two successive blow up at $-1$ at $y$-axis of $\mathbb{P}^2$. The boundary divisor $\check{D}$ is the proper transform of the toric boundary of $\mathbb{P}^2$. In particular, $\check{Y}$ is an isotrivial degeneration of del Pezzo surface of degree six. To sum up, we conclude the section with the following theorem.
\begin{thm}\label{mirror III}
  The family Floer mirror of $X_{III}$ has a partial compactification as the analytification of the $B_2$-cluster variety or the Gross-Hacking-Keel mirror of $(\YGHK,\DGHK)$ described above Lemma \ref{affine dp6}. In particular, the family Floer mirror of $X_{III}$ can be compactified as the analytification of certain isotrivial degeneration of del Pezzo surface of degree six. 
\end{thm}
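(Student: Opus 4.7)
The plan is to parallel the proof of Theorem \ref{mirror II} step by step, taking advantage of the ingredients already assembled in this section. First I would establish that the family Floer mirror $\check{X}$ of $X_{III}$ admits a partial compactification as the gluing of six rigid analytic tori $(\mathbb{G}_m^{an})^2_i$ indexed by the chambers $U_i$ cut out by the BPS rays $l_{\gamma_i}$, $i=1,\dots,6$, of Theorem \ref{BPS ray III}. Denote by $\alpha_i : \mathfrak{Trop}_i^{-1}(U_i) \hookrightarrow \check{X}$ the tautological embedding of each torus chart coming from the construction in Section \ref{sec:construction}; the task is to extend each $\alpha_i$ to a map from almost all of $(\mathbb{G}_m^{an})^2_i$.

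Using Lemma \ref{central charge III}, on each side of $l_{\gamma_{i+1}}$ the function $x_{i+1}$ remains positive on a sector of angular width $2\pi/3$. Exactly as in Section \ref{section: construction}, this already yields an extension of $\alpha_i$ to $\mathfrak{Trop}_i^{-1}(\overline{U_{i-2} \cup \cdots \cup U_{i+2}})$. The analogue of Lemma \ref{shearing lem} then pushes the extension one chamber further by absorbing a wall-crossing into a shearing transformation on the base, the analogue of Lemma \ref{filling the hole} fills in the zero locus of the wall functions by passing to neighboring charts, and the cancellation lemma stated just above Lemma \ref{central charge III} handles extension across the remaining ray $l_{\gamma_{i+4}}$. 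This chain of extensions yields $\alpha_i : \mathfrak{Trop}_i^{-1}(\mathbb{R}^2 \setminus \{0\}) \hookrightarrow \check{X}$, and piecewise-linearity of the induced map on bases implies $\alpha_i$ is an embedding; hence $\check{X}$ admits $\bigcup_i (\mathbb{G}_m^{an})^2_i / \sim$ as a partial compactification via GAGA.

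Next I would identify this partial compactification with the two target objects. By Lemma \ref{affine dp6}, the integral affine manifold with singularity underlying $\check{X}$ is the $\BGHK$ of the auxiliary log Calabi-Yau pair $(\tilde Y, \tilde D)$ described there. By Lemma \ref{identification wall functions}, under the identifications $z^{[D_i]} = z^{[C_i]} = z^{[C^j_i]} = 1$ forced by the monodromy $M \colon \gamma_i \mapsto \gamma_{i+2}$, the BPS wall functions match the canonical scattering diagram $\mathfrak{D}_{can}$, so the torus-gluing description of $\check{X}$ agrees chamber-by-chamber with that of $X_{I,\mathfrak{D}}^\circ$ in \cite{GHK}. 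The theta function relations then compactify $\check X$ to the del Pezzo surface of degree $6$. For the cluster interpretation, I would decompose the monodromy $M = M_1 M_2$ into the Picard-Lefschetz transformations with vanishing thimbles $\gamma_1'$ and $\gamma_2' + \gamma_3'$, redirect the branch cut along $l_{\gamma_1}$ and $l_{\gamma_6}$ as in Figure \ref{branch cuts dp6}, and verify by direct computation (using the pentagon-type identity in Claim \ref{focus-focus}) that the resulting theta function relations are precisely \eqref{theta relation dp6}, i.e.\ the $B_2$-cluster exchange relations.

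The main obstacle is handling the even rays, where the wall function $f_{\gamma_i} = (1 + T^{\omega(\gamma_i)} z^{\partial \gamma_i})^2$ is a square, reflecting the presence of two distinct $\mathbb{A}^1$-curves $C_i^1, C_i^2$ with the same boundary class but distinct curve classes. This affects the singular locus across which $\Psi_{i,i+1}$ must be analytically continued, and complicates the bookkeeping in the analogue of Lemma \ref{filling the hole}: one must check that the loci $\{1 + z^{\gamma_{i+1}} = 0\}$, $\{1 + z^{\gamma_{i+1}} + z^{\gamma_{i+2}} = 0\}$, $\ldots$ that arise from the compositions $\Psi_{i,i+2}, \Psi_{i,i+3}, \Psi_{i,i+4}$ still have common zero locus inside $\mathfrak{Trop}^{-1}(0)$. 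Because the scattering diagram is still finite and all wall functions are polynomial, no convergence issues enter; the verification is combinatorial but should follow the same template as in the $A_2$ case, just with doubled exponents along the three even rays.
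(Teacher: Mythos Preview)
Your proposal is correct and follows essentially the same route as the paper: extend each $\alpha_i$ using the central charge computation (Lemma \ref{central charge III}) to cover five chambers, push one chamber further via the shearing and hole-filling analogues of Lemmas \ref{shearing lem}--\ref{filling the hole}, close up across $l_{\gamma_{i+4}}$ using the monodromy-cancellation lemma, and then match with the GHK mirror via Lemmas \ref{affine dp6} and \ref{identification wall functions} and with the $B_2$ cluster variety via the branch-cut change of Figure \ref{branch cuts dp6}. The only minor slips are referential: the cancellation lemma sits \emph{after} Lemma \ref{central charge III}, not before, and $M_2$ is the $I_2$-type monodromy (the square of a Picard--Lefschetz transformation in the vanishing class $\partial\gamma_2'=\partial\gamma_3'$) rather than a single Picard--Lefschetz transformation; your flagged ``obstacle'' with the squared wall functions on even rays is real but, as you note, purely combinatorial and handled by the same template.
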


 \section{Family Floer Mirror of $X_{IV}$} \label{section:dp4}
 In this section, we will consider the case when $Y'$ be a rational elliptic surface with singular configuration $IV^*IV$, $D'$ is the type $IV^*$ fibre and $X'=Y'\setminus D'$. Recall that $X$ is the hyperK\"ahler rotation of $X'$ such that the elliptic fibration becomes a special Lagrangian fibration. We claim that the family Floer mirror of $X$ is then the del Pezzo surface of degree $4$. The argument is also similar to that in Section \ref{section: dp5}.
 Such rational elliptic surface $Y'$ has Weiestrass model 
 \begin{align}
     y^2=x^3+t^2s^4.
 \end{align}
Moreover, it has a deformation to rational elliptic surface with its singular configuration $IV^*II \hspace{1mm} I_2$ from the classification of singular configuration of rational elliptic surface \cite{Per}. In other words, one can deform the complex structure of $X'$ such that it has two singular fibres, one is of type $II$ and another one is of type $I_2$. 

Del Pezzo surfaces of degree four are blow up of $\mathbb{P}^2$ at five generic points. Let $z_1,z_2,z_3$ be the homogeneous coordinates of $\mathbb{P}^2$. Choose the five points as follows: $z_1=z_2=0$, one point on $\{z_1=0\}$, one point on $\{z_2=0\}$ and two points on $\{z_3=0\}$. Denote the proper transform of the coordinate lines of $\mathbb{P}^2$ by $D$. Notice that the corner blow up does not affect the Looijenga interior, i.e. $X$. For each of the non-toric blow up, the local model in \cite{A4} introduces a nodal singular fibre in $X$ and the vanishing cycle is determined by the blown-up coordinate line. One can arrange the position of the blow up, which does not change the topological type of $X$, such that the singular fibres corresponding to the non-toric blow-ups on $\{z_1=0\},\{z_2=0\}$ are close and the singular fibres corresponding to the two non-toric blow-ups on $\{z_3=0\}$ are close. The vanishing cycles corresponding to the non-toric blow-ups on $\{z_1=0\},\{z_2=0\}$ have intersection one (up to choice of sign) and thus the fibration can be locally deformed topologically such that the corresponding singular fibres merged to a type $II$ singular fibre. The vanishing cycles corresponding to the non-toric blow-ups on $\{z_3=0\}$ have intersection zero and thus the fibration can be locally deformed topologically such that the corresponding singular fibres merged to a type $I_2$ singular fibre. Furthermore, the vanishing cycle of the $I_2$-fibre parallel transport to the $II$-fibre is one of the classes in Theorem \ref{local discs} that bounds holomorphic discs. 
From Lemma \ref{lem: monodromy Appendix}, if a type $IV$-fibre deforms to a type $II$-fibre and an $I_2$-fibre, then the vanishing cycle of the $I_2$-fibre (up to parallel transport) is one of the classes in Theorem \ref{local discs}. Recall that from Remark \ref{transitive}, the monodromy around the singular fibres permutes the classes in Theorem \ref{local discs} transitively. 
This shows that the two geometry $X$ and $Y\setminus D$ have exactly the same diffeomorphism type because hyperK\"ahler rotations do not change the underlying spaces.

  \begin{thm}\cite{L12}*{Theorem 4.14}\label{BPS dp4}
   There exist $\gone,\gtwo,\gthree,\gfour \in  H_2(X,L_u)\cong \mathbb{Z}^4$ such that $\langle \gone,\gi \rangle=1$, $\langle \gi,\gj\rangle =0$ and $Z_{\gi}=Z_{\gj}$, for $i,j\in \{2,3,4\}$. Moreover, if we set 
    \begin{align*}
    \gamma_1&=-\gone,\ \gamma_2=\gtwo,\ \gamma_3=\gone+\gtwo+\gthree+\gfour,\ \gamma_4=\gone+\gtwo+\gthree,\\
    \gamma_5&=2\gone+\gtwo+\gthree+\gfour,\ \gamma_6=\gone+\gtwo,\ 
    \gamma_7=\gone, \ 
    \gamma_8=-\gfour.
    \end{align*}
    Then
	\begin{enumerate}
	    \item $f_{\gamma}(u)\neq 1$ if and only if $u\in l_{\gamma_i}$ and $\gamma=\gamma_i$ for some $i\in\{1,\cdots, 8\}$ .
	    \item In such cases, 
	    \begin{align*}
       f_{\gamma_i}=\begin{cases}1+T^{\omega(\gamma_i)}z^{\partial\gamma_i} & \mbox{ if $i$ odd,} \\
       (1+T^{\omega(\gamma_i)}z^{\partial\gamma_i})^3 & \mbox{ if $i$ even.}
       \end{cases}
	    \end{align*}
	\item If we choose the branch cut between $l_{\gamma_1}$ and $l_{\gamma_8}$, then the counter-clockwise monodromy $M$ across the branch cut is given by 
	\begin{align}
     \gone &\mapsto -(\gone+\gtwo+\gthree+\gfour) \notag \\
     \gtwo &\mapsto \gone+\gtwo \notag \\
     \gthree &\mapsto \gone +\gthree \\
     \gfour &\mapsto \gone +\gfour .
	\end{align}
	\end{enumerate}
 \end{thm}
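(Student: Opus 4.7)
The plan is to parallel the arguments that establish Theorem \ref{913} for $X_{II}$ and Theorem \ref{BPS ray III} for $X_{III}$: combine the deformation-invariance result Theorem \ref{399} with the split attractor flow mechanism underlying Corollary \ref{split attractor flow}. The key geometric input is the explicit Weierstrass form $y^2 = x^3 + t^2 s^4$ for $Y'_{IV}$, which admits a one-parameter smoothing $Y'_t$ in which the type $IV$ fibre over $u=0$ decomposes into four $I_1$ nodal fibres arranged so that one ``central'' node has its vanishing thimble intersecting each of the other three transversally, while the three ``peripheral'' thimbles are mutually disjoint. Taking $\gone$ to be minus the thimble of the central node and $\gtwo, \gthree, \gfour$ to be the peripheral thimbles then supplies the intersection pairings $\langle \gone, \gi\rangle = 1$ for $i\in\{2,3,4\}$ and $\langle \gi, \gj\rangle = 0$ for $i,j\in\{2,3,4\}$. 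The order-three symmetry of the $IV$-fibre monodromy forces the smoothing to place the three peripheral nodes symmetrically, so their central charges agree: $Z_{\gtwo} = Z_{\gthree} = Z_{\gfour}$. By Remark \ref{rk:affine} this equality passes to $\partial\gtwo = \partial\gthree = \partial\gfour$ in $H_1(L_u,\mathbb{Z})$, which in turn forces the BPS rays to coincide pointwise: $l_{\gtwo} = l_{\gthree} = l_{\gfour}$.

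Next I would enumerate all classes $\gamma$ with $\tilde\Omega(\gamma;u)\ne 0$ by repeating the split attractor flow argument of Corollary \ref{split attractor flow}: each such $\gamma$ is obtained from a rooted tree whose edges carry relative classes built from $\pm\gi$, and whose leaves map to $0$. The permissible trees are controlled by the consistency of Lemma \ref{compatibility} combined with the non-crossing of BPS rays in Lemma \ref{514}. The monodromy $M$ across the branch cut is then computed as the composition of the four Picard-Lefschetz transformations attached to the thimbles $\gone,\gtwo,\gthree,\gfour$, yielding the formulas in part (3). Tracing the iterated $M$-action on $-\gone$, together with the coincidence $l_{\gtwo} = l_{\gthree} = l_{\gfour}$, produces exactly the eight rays $l_{\gamma_1},\ldots, l_{\gamma_8}$ with the stated class decompositions. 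For the wall functions, an $I_1$-local contribution with multiple-cover correction $(-1)^{d-1}/d^2$ per \cite{GPS}*{Proposition 6.1} gives $1 + T^{\omega(\gamma_i)}z^{\partial\gamma_i}$ on each odd-index ray, whereas each even-index ray supports three coincident families whose contributions multiply to $(1 + T^{\omega(\gamma_i)}z^{\partial\gamma_i})^3$ after invoking the identification $\partial\gtwo = \partial\gthree = \partial\gfour$.

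The main obstacle will be the combinatorial step of ruling out extraneous relative classes with non-trivial $\tilde\Omega$: with four primitive local thimbles the attractor tree can branch in many more ways than in the $X_{II}$ or $X_{III}$ setting, and one must verify that no partial bubble decomposition mixing $\gone$ with a proper subset of $\{\gtwo,\gthree,\gfour\}$ produces a new BPS ray. The equality $Z_{\gtwo} = Z_{\gthree} = Z_{\gfour}$ is precisely what rescues the enumeration: any such partial decomposition yields a central-charge ray parallel to the common direction of $l_{\gtwo}$, hence already coincides with one of the listed rays. A final consistency check, analogous to Lemma \ref{cancellation}, is that the composition $\mathcal{K}_{\gamma_8}\circ\cdots\circ \mathcal{K}_{\gamma_1}$ recovers $M^{-1}$; this can be carried out by formally decomposing the $IV$ singularity into four focus-focus singularities via the deformation $Y'_t$ and applying the pentagon identity repeatedly, exactly as in the proof of Lemma \ref{cancellation}.
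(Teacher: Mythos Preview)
The paper does not contain its own proof of this statement: Theorem \ref{BPS dp4} is simply cited from \cite{L12}*{Theorem 4.14}, exactly as the analogous results for $X_{II}$ and $X_{III}$ are cited from \cite{L12}*{Theorems 4.11--4.12}. So there is no in-paper argument to compare against beyond the citation.

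That said, your outline is the correct methodology and mirrors what the paper does spell out in the $X_{II}$ case (Theorem \ref{local discs} through Corollary \ref{split attractor flow} and Theorem \ref{913}): deform the type $IV$ fibre into four $I_1$ fibres, read off the local invariants via Theorem \ref{399}, and globalize by the split attractor flow argument combined with Lemma \ref{514}. One small inaccuracy: the equality $\partial\gtwo=\partial\gthree=\partial\gfour$ in $H_1(L_u,\mathbb{Z})$ does not follow from Remark \ref{rk:affine} and the central-charge coincidence alone; rather it is forced by the intersection conditions $\langle\gi,\gj\rangle=0$ for $i,j\in\{2,3,4\}$ (these are pairings of boundary classes in the rank-two lattice $H_1(L_u,\mathbb{Z})$, so the $\partial\gi$ are mutually proportional primitive classes) together with $\langle\gone,\gi\rangle=1$ fixing a common sign. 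Also, the consistency check $\mathcal{K}_{\gamma_8}\cdots\mathcal{K}_{\gamma_1}=M^{-1}$ that you mention at the end is not part of Theorem \ref{BPS dp4} itself; in the paper it appears as a separate (unproven, ``similar and omitted'') lemma used later in Section \ref{section:dp4}.
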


  \begin{lem} \label{central charge IV}
   With suitable choice of coordinate $u$ on $B_0\cong \mathbb{C}^*$, we have 
    \begin{align}
      Z_{\gamma_k}(u)=\begin{cases}e^{\frac{5\pi}{6} i (k-1)} u^{\frac{2}{3}} & \mbox{if $k$ odd,}\\
     \frac{1}{\sqrt{3}}e^{\frac{5\pi}{6} i (k-1)}e^{-\frac{\pi i }{6}} u^{\frac{2}{3}} & \mbox{if $k$ even.}
      \end{cases}
    \end{align}
       In particular, the angle between $l_{\gamma_i}$ and $l_{\gamma_{i+1}}$ is $\frac{\pi}{4}$. See how the BPS rays position as demonstrated in Figure {\ref{fig:g2}}. 
 \end{lem}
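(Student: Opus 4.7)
The proof closely parallels Lemmas \ref{central charge II} and \ref{central charge III} and proceeds in three steps. First, I will establish the $u^{2/3}$ asymptotic of the central charges. Near the type $IV$ singular fiber the explicit Weierstrass equation $y^2 = x^3 + t^2 s^4$ from Section \ref{section: setup} (after setting $s = 1$ and writing $u = t$) gives the local affine model $y^2 = x^3 + u^2$. A smooth fiber is a branched double cover of the $x$-plane ramified at $\zeta^j(-u^2)^{1/3}$, $j = 0, 1, 2$, with $\zeta = e^{2\pi i/3}$. Writing $\Omega' = f(u)\, du \wedge (dx/y)$ with $f(0) \ne 0$, and representing each vanishing class by an $S^1$-fibration over a segment from $u=0$ to $u=u_0$ as in \eqref{asymptotic}, the substitution $x = u^{2/3}w$ yields
\begin{align*}
\int_\gamma \Omega' = \int_0^{u_0} u^{-1/3}\Bigl(\int \frac{dw}{\sqrt{w^3+1}}\Bigr) du + O(|u_0|) = O(|u_0|^{2/3}),
\end{align*}
which both establishes the exponent $2/3$ and gives an explicit integral expression for the leading coefficient of each $Z_{\gamma_k}$.

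Second, I rescale $u$ so that $Z_{\gamma_1}(u) = u^{2/3}$ and use this normalization to propagate the formula to the remaining classes. The central charges for the odd-indexed classes $\gamma_3, \gamma_5, \gamma_7$ are then determined by the identity $Z_{M\gamma}(u) = Z_\gamma(ue^{2\pi i})$ together with the monodromy formulas in Theorem \ref{BPS dp4}: analytic continuation of $u^{2/3}$ around the puncture multiplies by cube roots of unity, and combining this with how $M$ cycles the odd-indexed classes pins down the odd-index formula. For the even-indexed classes, the prefactor $1/\sqrt{3}$ and the extra phase $e^{-\pi i/6}$ are obtained by combining (i) the equality $Z_{\gtwo} = Z_{\gthree} = Z_{\gfour}$ from Theorem \ref{BPS dp4}, which produces linear relations such as $Z_{\gamma_3} = Z_{\gamma_2} + Z_{\gamma_4}$ among the central charges of adjacent rays, with (ii) the already-determined odd-indexed formulas.

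Third, with the central charges in hand, each $l_{\gamma_k}$ is located by solving $Z_{\gamma_k}(u) \in \mathbb{R}_{>0}$ for the argument of $u$: the explicit $u^{2/3}$ dependence places $l_{\gamma_k}$ at argument $(k-1)\pi/4$, giving the claimed angular spacing of $\pi/4$ between consecutive rays.

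The main obstacle is handling the even-indexed central charges. Because the wall function has multiplicity $3$ at even $k$---reflecting the three coincident vanishing cycles $\gtwo, \gthree, \gfour$---the coefficient $1/\sqrt{3}$ cannot be produced by the odd-indexed monodromy argument alone, and must instead be extracted either from a direct asymptotic computation of $\int dw/\sqrt{w^3+1}$ over a representative cycle, or by algebraically solving the system of linear relations that the $Z_{\gamma_i}$ satisfy together with the monodromy constraints. This is the only step that diverges substantively from the proofs of Lemmas \ref{central charge II} and \ref{central charge III}.
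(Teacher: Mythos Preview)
Your overall strategy---establish the $u^{2/3}$ asymptotic from the local Weierstrass model, normalise $c_1=1$, and then pin down the remaining coefficients via a combination of the monodromy identity $Z_{M\gamma}(u)=Z_\gamma(ue^{2\pi i})$ and the linear relations coming from $Z_{\gtwo}=Z_{\gthree}=Z_{\gfour}$---is exactly the paper's approach.

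There is, however, a genuine slip in your second step. You assert that ``$M$ cycles the odd-indexed classes'', but this fails in the $IV$ case: from the monodromy in Theorem~\ref{BPS dp4} one computes $M\gamma_1=\gamma_3$, yet
\[
M\gamma_3=M(\gone+\gtwo+\gthree+\gfour)=2\gone,
\]
which is \emph{not} $\gamma_5=2\gone+\gtwo+\gthree+\gfour$. So you cannot obtain $c_5$ (or $c_7$) by iterating the monodromy on $\gamma_1$, and your ``odd first, then even'' organisation breaks down. This is a real difference from Lemmas~\ref{central charge II} and~\ref{central charge III}, where $M$ does shift the index uniformly.

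The paper's proof avoids this by reversing the logic: since $Z_{\gtwo}=Z_{\gthree}=Z_{\gfour}$, every $Z_{\gamma_k}$ is an explicit $\mathbb{Z}$-linear combination of $Z_{\gone}$ and $Z_{\gtwo}$, hence of $c_1$ and $c_2$. One then uses the normalisation together with a \emph{single} monodromy relation (for instance $M\gamma_1=\gamma_3$) to solve for $c_2$, after which all eight $c_k$ follow from the linear expressions. Your ingredients are the right ones; only the order in which you apply them needs to be swapped.
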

 
 \begin{figure}[H]
\centering
\begin{tikzpicture}
\draw[->] (0,0)--++(300:2) node[right] {$\gamma_1 = -\gone$};
\draw[->] (0,0)--++(0:2) node[right] {$\gamma_2 =  \gtwo$};
\draw[->] (0,0)--++(22.5:2) node[right] {$\gamma_3 =  \gamma_1'+\gamma_2'+\gamma_3'+\gamma_4'$};
\draw[->] (0,0)--++(45:2)node[right] {$ \gamma_4 = \gamma_1'+\gamma_2'+\gamma_3'$};
\draw[->] (0,0)--++(67.5:2) node[right] {$\gamma_5 =  2\gamma_1'+\gamma_2'+\gamma_3'+\gamma_4'$};
\draw[->] (0,0)--++(90:2) node[above] {$\gamma_6 = \gone +\gtwo$};
\draw[->] (0,0)--++(135:2) node[left] {$ \gamma_7 = \gone$} ;
\draw[->] (0,0)--++(180:2) node[below] {$ \gamma_8 = -\gamma_4'$} ;
\filldraw(0,0) circle(1pt) ;
\draw[dashed] (0,0)--++(240:2) ;
\end{tikzpicture}
\caption{BPS rays near the singular fibre in $X_{IV}$. Note in Theorem \ref{BPS dp4}, we have $Z_{\gi}=Z_{\gj}$, for $i,j\in \{2,3,4\}$.}  \label{fig:g2}
\end{figure}

 \begin{proof}
    One can check that $Z_{\gamma}(u)=O(|u|^{\frac{2}{3}})$ and let $Z_{\gamma_k}(u)=c_ku^{\frac{2}{3}}$.
    Using the relations between $\gamma_i$ and straight-forward calculation show that
    \begin{align*}
     c_1=1, c_2=\frac{1}{\sqrt{3}}e^{-\frac{\pi i}{6}}, c_3=e^{-\frac{\pi i}{3}}, c_4=-\frac{i}{\sqrt{3}}
    \end{align*}
    after suitable normalization of the coordinate $u$. Then use the relation $M_{\gamma_i}=\gamma_{i+4}$ to determines the rest of $c_k$. 
 \end{proof}
 With the data above, the similar argument in Section \ref{section: construction} shows that the family Floer mirror of $X_{IV}$ is gluing of eight copies of $\mathfrak{Trop}^{-1}(\mathbb{R}^2\setminus\{0\})\subseteq (\mathbb{G}_m^{an})^2$ , with the gluing functions in Theorem \ref{BPS dp4}. 
 Similar to the argument of Section \ref{comparison A_2}, we may change the branch cut in Figure \ref{fig:g2} into two, as in Figure \ref{fig:linyushen}.
 
 \begin{figure}[H]
\centering
\begin{tikzpicture}
\draw[->] (0,0)--++(300:2) node[right] {$\gamma_1 = -\gone$};
\draw[->] (0,0)--++(0:2) node[right] {$\gamma_2 =  \gtwo$};
\draw[->] (0,0)--++(22.5:2) node[right] {$\gamma_3 =  \gamma_1'+\gamma_2'+\gamma_3'+\gamma_4'$};
\draw[->] (0,0)--++(45:2)node[right] {$ \gamma_4 = \gamma_1'+\gamma_2'+\gamma_3'$};
\draw[->] (0,0)--++(67.5:2) node[right] {$\gamma_5 =  2\gamma_1'+\gamma_2'+\gamma_3'+\gamma_4'$};
\draw[->] (0,0)--++(90:2) node[above] {$\gamma_6 = \gone +\gtwo$};
\draw[->] (0,0)--++(135:2) node[left] {$ \gamma_7 = \gone$} ;
\draw[->] (0,0)--++(180:2) node[below] {$ \gamma_8 = -\gamma_4'$} ;
\filldraw(0,0) circle(1pt) ;
\filldraw(180:1) circle(1pt);
\draw[color=blue, snake=snake,
segment amplitude=.5mm,
segment length=2mm]  (0,0)--++(300:2);
\draw[color=blue, snake=snake,
segment amplitude=.5mm,
segment length=2mm]  (0,0)--++(180:2);
\end{tikzpicture}
\caption{A choice of a different branch cut for $X_{IV}$} \label{fig:linyushen}
\end{figure}
 
The scattering diagram of cluster type $G_2$ can be found in \cite{GHKK}*{Figure 1.2}. One can show that the corresponding gluing functions of the $\cX$ case are the same as those in Theorem \ref{BPS dp4} under suitable identification.
 Then the family Floer mirror of ${X}_{IV}$ can be partially compactified to gluing of eight tori (up to GAGA) with the gluing functions same as the $\cX$-cluster variety of type $G_2$. 
 
Next we will construct a log Calabi-Yau pair $(Y,D)$ such that the corresponding Gross-Hacking-Keel mirror corresponds to the family Floer mirror of $X_{IV}$. We will take 
\begin{enumerate}
    \item $Y$ to be the blow up of $\mathbb{P}^2$ at $4$ points, three of them are the non-toric points on $y$-axis and one non-toric point on $x$-axis.
    \item $D$ is the proper transform of $x,y,z$-coordinate axis. 
\end{enumerate}
  Let $\tilde{Y}$ be the successive toric blow up of $(Y,D)$ at the intersection of $x,z$-axis, the proper transform of $z$-axis and the exceptional divisor, the two nodes on the last exceptional divisor and then the proper transform of $y,z$-axis in order. Then take $\tilde{D}$ to be the proper transform of $D$. 
Denote $H$ to be the pull-back of the hyperplane class on $\mathbb{P}^2$, $E_1$ (and $E_2,E_3,E_4$) to be the exceptional divisor of the blow up on the non-toric point on the $x$-axis (and $y$-axis). 

Similar to the argument Section \ref{comparison w/ GHK} we have the following lemma.
\begin{lem}
  The complex affine structure on $B_0$ together with $l_{\gamma_i}$ is isomorphic to the integral affine manifold $B_{\GHK}$ of $(\tilde{Y},\tilde{D})$. Moreover, the BPS rays $l_{\gamma_i}$ give the corresponding cone decomposition on $B_{\GHK}$ from $(\tilde{Y},\tilde{D})$, the wall function with restriction $z^{[D_i]}=z^{[E_i]}=1$ and the identification $d$ coincide with the functions in Theorem \ref{BPS dp4}
\end{lem}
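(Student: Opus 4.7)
The plan is to adapt, almost verbatim, the two-step strategy used to establish Lemma \ref{affine identification} for $X_{II}$ and Lemmas \ref{affine dp6}/\ref{identification wall functions} for $X_{III}$. The first step is to identify the integral affine manifold $B_0$ with $\BGHK$, together with the chamber decomposition. The second step is to enumerate the $\mathbb{A}^1$-curves of $(\tilde{Y},\tilde{D})$ and match the resulting canonical wall functions with those from Theorem \ref{BPS dp4}. The monodromy and symmetry of the situation ($Z_{\gi}=Z_{\gj}$ for $i,j\in\{2,3,4\}$) dictate which log Calabi-Yau pair must be chosen, and this is precisely the $(\tilde{Y},\tilde{D})$ built from the iterated toric blow-up described right above the lemma.

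First, using Lemma \ref{central charge IV} together with the relations among the $\gamma_i$ listed in Theorem \ref{BPS dp4}, the primitive tangent vectors to $l_{\gamma_1}$ and $l_{\gamma_2}$ with respect to the complex affine structure can be computed as $(1,0)$ and $(0,1)$ respectively; the linear identity $Z_{\gamma_{i-1}}+Z_{\gamma_{i+1}}=k_i Z_{\gamma_i}$ (with $k_i=1$ or $3$ determined by the parity of $i$, coming from the relations in Theorem \ref{BPS dp4}) then propagates to give primitive tangent vectors for every $l_{\gamma_j}$. This yields a piecewise-linear embedding of the sector $\{0<\mathrm{Arg}(u)<2\pi\}$ into $\mathbb{R}^2$, and the monodromy described in Theorem \ref{BPS dp4}(3) glues the two edges to produce an affine manifold isomorphic to $\BGHK$ of $(\tilde{Y},\tilde{D})$. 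The self-intersections $\tilde{D}_i^2$ must then be checked from the toric blow-up description: for odd $i$ the divisor $\tilde{D}_i$ has self-intersection $-1$ (making the bending parameter $k_i=1$ in \eqref{affine relation}), while for even $i$ the divisor $\tilde{D}_i$ has self-intersection $-3$ (giving $k_i=3$). The resulting cone decomposition $\Sigma$ then coincides with the chambers cut out by $\{l_{\gamma_i}\}_{i=1}^{8}$.

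Second, enumerate all irreducible $\mathbb{A}^1$-curves of $(\tilde{Y},\tilde{D})$. Since $-K_{\tilde{Y}}$ is big and the boundary is anti-canonical, any irreducible $\mathbb{A}^1$-curve is the proper transform of an exceptional divisor or of a line/conic on $\mathbb{P}^2$ passing through a prescribed subset of the blow-up points. The three-fold symmetry among the blow-up points on the $y$-axis produces, for each even-index ray $\rho_i$, exactly three $\mathbb{A}^1$-curve classes $[C_i^j]$, $j=1,2,3$, while each odd-index ray $\rho_i$ carries a single $\mathbb{A}^1$-curve class $[C_i]$. After identifying $[C_i]-\phi_{\rho_i}(v_i)$ (resp.\ $[C_i^j]-\phi_{\rho_i}(v_i)$) with $\partial\gamma_i$ via the isomorphism of short exact sequences \eqref{identification of short exact sequence} (established exactly as in Section \ref{comparison w/ GHK}, using the local model of Lagrangian fibrations near blow-up points from \cite{A4}), the Gross-Pandharipande-Siebert multiple cover formula \cite{GPS}*{Proposition 6.1} yields the wall function
\[
f_{\rho_i}=\begin{cases}1+z^{[C_i]-\phi_{\rho_i}(v_i)} & i \text{ odd,}\\ \prod_{j=1}^{3}\bigl(1+z^{[C_i^j]-\phi_{\rho_i}(v_i)}\bigr) & i \text{ even.}\end{cases}
\]
Specializing $z^{[D_i]}=z^{[E_i]}=1$ (forced by the assumption $Z_{\gi}=Z_{\gj}$ for $i,j\in\{2,3,4\}$ and by the monodromy action $\gamma_i\mapsto\gamma_{i+4}$) collapses each factor to $1+T^{\omega(\gamma_i)}z^{\partial\gamma_i}$, which matches Theorem \ref{BPS dp4}(2) exactly.

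The main obstacle is the enumeration step: one must check that \emph{no} higher-degree curves on $\mathbb{P}^2$ (say conics or cubics through the four blow-up points with appropriate tangency) contribute new $\mathbb{A}^1$-classes attached to any ray. This is a finite check using the intersection constraints $\beta\cdot\tilde{D}_j=0$ for $j\ne i,i+1$ on each ray together with a degree bound coming from $-K_{\tilde{Y}}\cdot\beta>0$; the symmetry of the configuration makes the count manageable but slightly more involved than in the degree-$5$ and degree-$6$ cases. Once this enumeration is secured, identifying the distinguished fibre in the Gross-Hacking-Keel family to which the family Floer mirror corresponds proceeds as in Section \ref{comparison w/ GHK}.
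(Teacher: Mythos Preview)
Your approach is the same as the paper's: the lemma is asserted with the words ``Similar to the argument Section \ref{comparison w/ GHK}'' and no further proof, so your two–step plan (affine identification via Lemma~\ref{central charge IV}, then enumeration of $\mathbb{A}^1$-curves as in Lemma~\ref{identification wall functions}) is precisely what the paper intends.  Your computation that $\tilde{D}_i^2=-1$ for $i$ odd and $\tilde{D}_i^2=-3$ for $i$ even, matching the bending relation $v_{i-1}+v_{i+1}=k_iv_i$ with $k_i\in\{1,3\}$, is correct.

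There is one imprecision in your enumeration that you should fix.  You write that ``any irreducible $\mathbb{A}^1$-curve is the proper transform of an exceptional divisor or of a line/conic on $\mathbb{P}^2$,'' and then flag higher-degree curves only as something to be \emph{excluded}.  In fact, as the paper records in Figure~\ref{fig:G2curves}, the full list of simple $\mathbb{A}^1$-curves includes the nodal cubic class $3H-2E_1-E_2-E_3-E_4$ (attached to one of the odd rays) in addition to the exceptional curves, lines, and conics $2H-E_1-E_i-E_j$.  Without the cubic you would be missing the single $\mathbb{A}^1$-curve on one odd-index ray, and the wall function there would be trivial rather than $1+z^{\partial\gamma_i}$.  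So the enumeration step is not ``check that no conics/cubics appear'' but rather ``include the conics and the cubic, and check that nothing of degree $\geq 4$ appears.''  Relatedly, the ampleness shortcut you allude to (``$-K_{\tilde Y}$ big and boundary anti-canonical'') needs care here: with three of the four blown-up points collinear, $-K_Y$ is only nef, so the ``no interior curves'' step in Lemma~\ref{identification wall functions} requires a slightly more careful check than in the degree-$5$ and degree-$6$ cases.
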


We then can compute the canonical scattering diagram for $(Y,D)$. Actually all the simple $\mathbb{A}^1$-curves contributing to the scattering diagram are toric transverse in $(\tilde{Y},\tilde{D})$, which are depicted in Figure \ref{fig:G2curves} below.
\begin{figure}[H]
\centering
\def\svgwidth{220pt}
    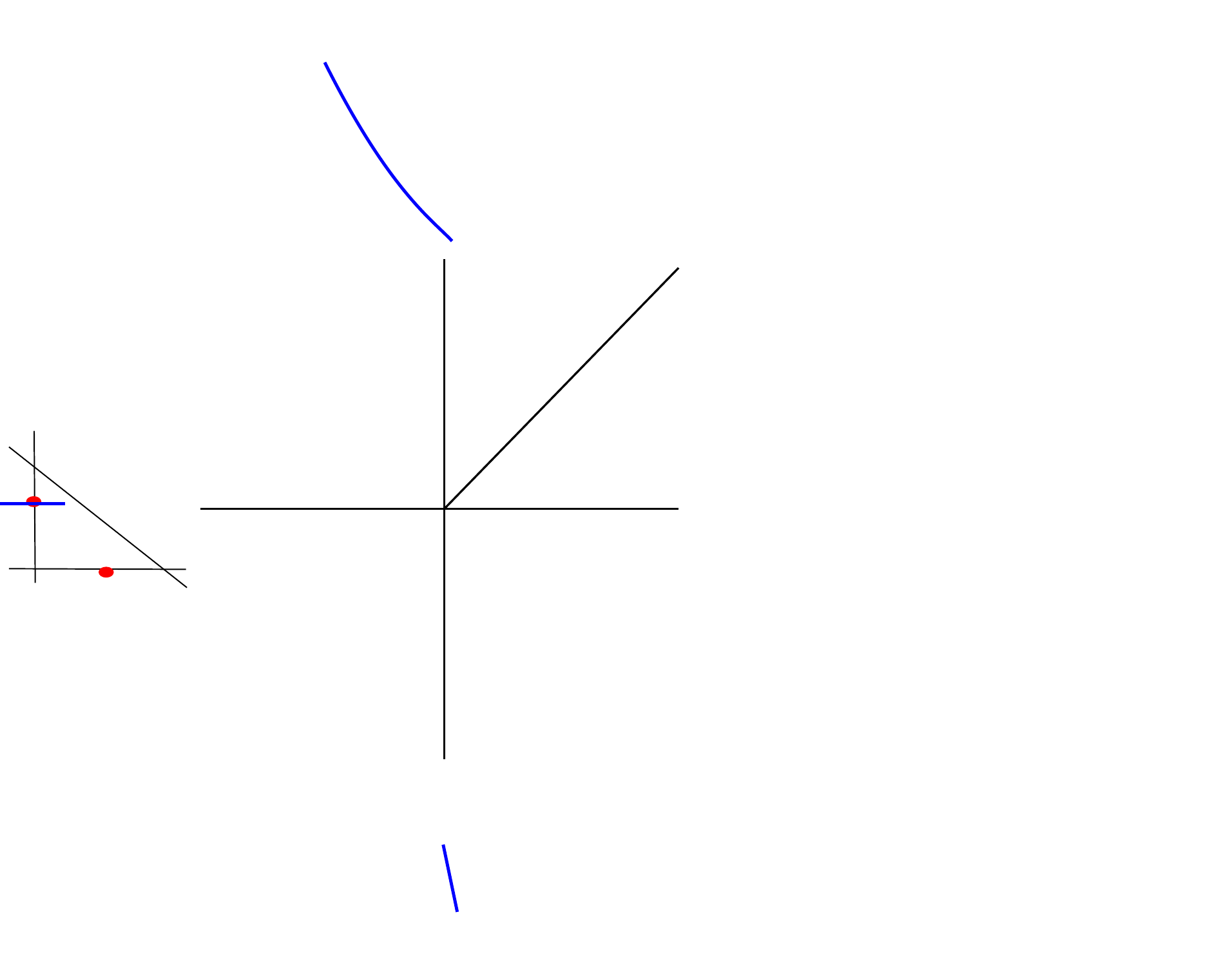
    \caption{The scattering diagram of Gross-Pandharipande-Siebert \cite{GPS} and the $\mathbb{A}^1$-curves corresponding to $X_{IV}$ (illustrated by a projection to $\mathbb{P}^2$). } \label{fig:G2curves}
\end{figure}

We conclude the section with the following theorem. 
\begin{thm} \label{mirror IV}
  The family Floer mirror of $X_{IV}$ has a partial compactification as the analytification of the $G_2$-cluster variety or the Gross-Hacking-Keel mirror of a suitable pair $(\YGHK,\DGHK)$. 
\end{thm}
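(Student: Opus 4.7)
The plan is to mirror the strategy used for $X_{II}$ in Sections~\ref{section: construction}--\ref{comparison w/ GHK}, with Theorem~\ref{BPS dp4} and Lemma~\ref{central charge IV} playing the roles of Theorem~\ref{913} and Lemma~\ref{central charge II}. First, taking $U_i$ to be the chamber bounded by $l_{\gamma_i}$ and $l_{\gamma_{i+1}}$ for $i=1,\dots,8$ (with $U_9=U_1$), I would construct $\check{X}$ as the family Floer mirror $\coprod_{i=1}^{8}\mathfrak{Trop}_i^{-1}(U_i)/{\sim}$ with wall-crossing identifications $\Psi_{i,i+1}$ determined by the functions $f_{\gamma_i}$ in Theorem~\ref{BPS dp4}. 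The initial embeddings $\alpha_i:\mathfrak{Trop}_i^{-1}(U_i)\hookrightarrow\check{X}$ are constructed exactly as in Section~\ref{section: construction}.

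Next I would extend each $\alpha_i$ to an embedding of the full analytic torus $(\mathbb{G}_m^{an})_i^2$. The extension across the adjacent walls into the neighboring chambers $U_{i\pm1}$ follows because the wall-crossing functions $f_{\gamma_{i\pm1}}$ are analytic on the torus; the extension to the further chambers $U_{i\pm2}$ uses the analogue of the shearing Lemma~\ref{shearing lem}, and filling in the vanishing loci $\{1+T^{\omega(\gamma_j)}z^{\partial\gamma_j}=0\}$ uses the analogue of Lemma~\ref{filling the hole}, where one checks that the three-chart union $\alpha_{j+1}\cup\alpha_{j+2}\cup\alpha_{j+3}$ covers all base points except perhaps $\mathfrak{Trop}^{-1}(0)$. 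Finally, the extension across the remaining opposite ray uses the monodromy-cancellation identity
\[
\mathcal{K}_{\gamma_8}\mathcal{K}_{\gamma_7}\cdots\mathcal{K}_{\gamma_1}(z^{\gamma})=z^{M^{-1}\gamma},
\]
proved as in Lemma~\ref{cancellation}: decompose $M=M_1M_2M_3M_4$ into four focus-focus transformations with Lefschetz thimbles $\gamma_1',\gamma_2',\gamma_3',\gamma_4'$, and reduce to the meta-Claim~\ref{focus-focus} via iterated applications of the pentagon identity. This realizes $\check{X}$ as a gluing of eight rigid analytic tori.

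For the cluster identification, I would apply the ``pushing singularities to infinity'' trick of Section~\ref{comparison A_2}: choose two branch cuts (say along $l_{\gamma_1}$ and $l_{\gamma_8}$) and decompose the monodromy as a product of two Picard-Lefschetz transformations. The resulting gluing data on the eight tori then coincides with the $\mathcal{X}$-cluster scattering diagram of type $G_2$ drawn in \cite{GHKK}*{Figure 1.2}, after matching the primitive generators of $l_{\gamma_i}$ using Lemma~\ref{central charge IV} (which provides the $\pi/4$ angles and hence the $G_2$ root data). The wall functions $(1+z^{\partial\gamma_i})$ for $i$ odd and $(1+z^{\partial\gamma_i})^3$ for $i$ even exactly reproduce the $d_1=1$, $d_2=3$ exchange relations \eqref{eq:rank2eq}.

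For the GHK identification, I would take $(\tilde{Y},\tilde{D})$ as described, verify the affine-structure isomorphism $B_0\cong B_{\GHK}$ by the method of Lemma~\ref{affine dp6} (identifying $l_{\gamma_1}$, $l_{\gamma_2}$ with the coordinate rays in $\BGHK$ and using Lemma~\ref{central charge IV} to propagate tangent vectors around), then identify the short exact sequence \eqref{identification of short exact sequence} as in Section~\ref{comparison w/ GHK}. It remains to compute the canonical scattering diagram of $(\tilde{Y},\tilde{D})$ and match it with Theorem~\ref{BPS dp4}. The main obstacle is the enumeration of all $\mathbb{A}^1$-curves with proper transform in $\tilde{Y}$: in addition to the seven obvious lines and exceptional curves already listed in the text, one must verify that the multiplicity-three wall function at even-indexed rays is accounted for by the three simple $\mathbb{A}^1$-curves crossing each such ray (conics through $E_1$ and two of $E_2,E_3,E_4$, together with the cubic $3H-2E_1-E_2-E_3-E_4$), and that no higher-degree irreducible $\mathbb{A}^1$-curves contribute, which follows from ampleness of $D$ and a degree-genus check. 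The multiple-cover contributions then give the logarithm $\sum_d (-1)^{d-1}d^{-1}z^{d\partial\gamma_i}$ per \cite{GPS}*{Proposition 6.1}, matching Theorem~\ref{BPS dp4} under the specialization $z^{[D_i]}=z^{[E_i]}=1$ forced by the monodromy relation $M\gamma_i=\gamma_{i+4}$. This singles out the distinguished fiber of the GHK family to which $\check{X}$ is the analytification, completing the proof.
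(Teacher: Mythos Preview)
Your proposal is correct and follows essentially the same approach as the paper: the paper itself only says ``the similar argument in Section~\ref{section: construction} shows that the family Floer mirror of $X_{IV}$ is gluing of eight copies of $\mathfrak{Trop}^{-1}(\mathbb{R}^2\setminus\{0\})$'', then changes branch cuts as in Figure~\ref{fig:linyushen}, constructs the pair $(Y,D)$, and appeals to the $\mathbb{A}^1$-curve enumeration of Figure~\ref{fig:G2curves}. One small imprecision: when you write ``decompose the monodromy as a product of two Picard--Lefschetz transformations'' for the branch-cut step, note that the type $IV$ fibre deforms into \emph{four} $I_1$ fibres (with vanishing cycles $\gamma_1'$ and $\gamma_2',\gamma_3',\gamma_4'$, the latter three satisfying $Z_{\gamma_2'}=Z_{\gamma_3'}=Z_{\gamma_4'}$), so the two cuts in Figure~\ref{fig:linyushen} carry monodromies $M_1$ (Picard--Lefschetz for $\gamma_1'$) and $M_2M_3M_4$ (an $I_3$-type monodromy, i.e.\ three commuting Picard--Lefschetz transformations sharing a vanishing-cycle direction), not two simple Picard--Lefschetz transformations.
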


\section{Further Remarks}
Here we consider the family Floer mirror of $X$ without the geometry of its compactification. Following the idea of the Gross-Hacking-Keel as summarized in Section \ref{RES}, one would need to use the theta functions, the tropicalization of the counting of Maslov index two discs, to construct a (partial) compactification of the original mirror. Assuming that $X=X_*$ in the previous sections admit a compactification to a rational surface with an anti-canonical cycle at infinity. Moreover, assume that the there is certain compatibility between the compactification and the asymptotic of the metric behavior. Then one can follow the similar argument in the work of the second author \cite{L14} and prove that the counting of the Maslov index two discs with Lagrangian fibre boundary conditions can be computed by the weighted count of broken lines. Such asymptotics behavior of the Ricci-flat metrics will be provided in the upcoming work of the second author with T. Collins \cite{CL2}.

One can further construct the pair $(\YGHK,\DGHK)$ such that the corresponding monodromy is conjugate to the monodromy of the type $IV^*,III^*,II^*, I_0^*$. For instance, the case of $I_0^*$ can be realized by a cubic surface with anti-canonical cycle consisting of three $(-1)$-curves \cite{ghks_cubic}. 
The authors would expect that the family Floer mirror of $X=Y\setminus D$ coincides with a particular fibre in the mirror family constructed by Gross-Hacking-Keel. Moreover, the families of Maslov index zero discs emanating from the singular fibres in $X$ are one-to-one corresponding to the $\mathbb{A}^1$-curves of the pair $(\YGHK,\DGHK)$. This may help to understand the Floer theory of more singular Lagrangians. 
In this case, the wall functions are algebraic functions and the GAGA can still apply. Although the walls are dense, it is likely the mirror can be covered by finitely many tori up to some codimension two locus.
In general, the wall functions may not be algebraic a priori and GAGA may not apply directly. 
The authors will leave it for the future work. 

\appendix
\section{Proof of Lemma \ref{cancellation}}
	We will use the identification as in \eqref{mutation}.
	Let us consider $a=a_1 \gone+ a_2 \gtwo \in H_1(L_p,\mathbb{Z})$, where $p\in B_0$ is a reference point, and a loop from $l_{\gamma_1}$ anticlockwise to itself:
	We will first compute the case without any singularities.
	This is very standard from \cite{GPS}. We are only repeating it as there may be confusion about signs.
	\begin{figure}[H]
		\centering
		\begin{tikzpicture}
			\draw (0,0)--++(45:2) node[above right] {$1+z^{\gtwo}$};
			\draw (0,0)--++(90:2) node[above] {$1+z^{\gone + \gtwo}$};
			\draw (0,0)--++(135:2)node[left] {$ 1+z^{\gone}$};
			\draw (0,0)--++(-45:1.5);
			\draw (0,0)--++(-135:1.5) ;
			\node[right] at ++(45:1.6) {$l_{\gamma_2}$};
			\node[right] at ++(90:1.6) {$l_{\gamma_3}$};
			\node[right] at ++(135:1.6) {$l_{\gamma_4}$};
			\node[right] at ++(-135:1.6) {$l_{\gamma_5}$};
			\node[right] at ++(-45:1.6) {$l_{\gamma_1}$};
			\node[left] at (-.6,0) {$\delta$};
			\filldraw(0, 0) circle(1pt) ;
			\draw[thick, ->] ([shift={(0:0.5)}] 0, 0)  arc (0:330:.5) ;
		\end{tikzpicture}
	\end{figure}

	\begin{remark} \label{rk:loop}
		Before we go into the calculation, let us unfold the sign convention in Theorem \ref{thm:wallcrossing}. 
		To determine the sign, we have the condition 
		$\mbox{Arg}Z_{\gamma}(u_-)<\mbox{Arg}Z_{\gamma}(u_+)$. 
		This means that the loop $\delta$ is going in  anti-clockwise direction.
		
		In the calculation of the exponents, we consider $\gamma \mapsto \langle \cdot , \gamma \rangle$.
		Note that $\langle \cdot, \cdot \rangle$ is the intersection pairing but not the usual inner product. 
		Together with
		$\langle \gone, \gtwo \rangle =1$, 
		we have $ \langle \cdot , \gamma \rangle$ is the normal of $l_{\gamma}$ pointing in the same direction as $\delta$ in the language of \cite{GPS}.
	\end{remark}

	Let us consider the transformation $\mathcal{K}_{\delta}=\mathcal{K}_{{\delta}, l_{\gamma_1}}
	\mathcal{K}_{{\delta}, l_{\gamma_5}} \mathcal{K}_{{\delta}, l_{\gamma_4}}
	\mathcal{K}_{{\delta}, l_{\gamma_3}}
	\mathcal{K}_{{\delta}, l_{\gamma_2}}$, 
	where $\mathcal{K}_{{\delta}, l_{\gamma_k}} = \mathcal{K}_{\gamma_k}$ for $k = 1, 2, 3$;
	$\mathcal{K}_{{\delta}, l_{\gamma_k+3}} = \mathcal{K}_{\gamma_k}$ for $k = 1, 2$.
	
	To simplify the notation, we will denote 
	$\xmapsto{\mathcal{K}_{{\delta}, l_{\gamma_k}} }$ for the wall crossing over the wall $l_{\gamma_k}$ according to the curve $\delta$. 
	\begin{align*}
		z^a &\xmapsto{\mathcal{K}_{{\delta}, l_{\gamma_2}} } z^a (1+z^{\gtwo})^{a_1}, \\
		&\xmapsto{\mathcal{K}_{\delta, l_{\gamma_3}}}  z^a (1+z^{\gone + \gtwo})^{a_1-a_2}  \left(1+z^{\gtwo}(1+z^{\gone + \gtwo})^{-1} \right)^{a_1}, \\
		&= z^a (1+z^{\gone + \gtwo})^{-a_2} (1+z^{\gtwo}+ z^{\gone + \gtwo})^{a_1}, \\
		&\xmapsto{\mathcal{K}_{\delta, l_{\gamma_4}}} z^a (1+z^{\gone})^{-a_2} 
		\left(1+z^{\gone + \gtwo}(1+z^{\gone})^{-1}\right)^{-a_2}
		\left( 1+ z^{\gtwo}(1+z^{\gone})^{-1}(1+z^{\gone})\right)^{a_1}, \\
		&= z^a (1+z^{\gone} + z^{\gone + \gtwo} )^{-a_2} (1+z^{\gtwo})^{a_1}, \\
		&\xmapsto{\mathcal{K}_{\delta, l_{\gamma_5}}} z^a (1+z^{\gtwo})^{-a_1} 
		\left(1+z^{\gone}(1+z^{\gtwo})^{-1}  (1 + z^{\gtwo }) \right)^{-a_2}
		(1+z^{\gtwo})^{a_1}\\
		&=  z^a (1+z^{\gone})^{-a_2},\\
		&\xmapsto{\mathcal{K}_{\delta, l_{\gamma_1}}} z^a (1+z^{\gone})^{a_2}(1+z^{\gone})^{-a_2},\\
		&=z^a.
	\end{align*}
	
	Thus we obtain the consistency as usual.
	Next we investigate the wall crossing transformation over the monodromy deduced by focus-focus singularities on $l_{\gtwo}$. 
	\begin{figure}[H]
		\centering
		\begin{tikzpicture}
			\draw
			(-3,-1) node[left] {$1+z^{-\gtwo}$} -- (1,1/3) node[right] {$ 1+z^{\gtwo}$};
			\filldraw(0, 0) circle(1pt) node[above]{$0$};
			\fill(-1.5, -1.5/3) node[sing] {};
			\draw[thick,<-] ([shift={(250:.5)}] -1.5, -1.5/3)  arc (250:-30:.5);
			\node at (-1, 0.2) {$\beta$};
		\end{tikzpicture}
	\end{figure}
	
	Let us consider the wall crossing $\mathcal{K}_{\beta}=\mathcal{K}_{\beta,2}\mathcal{K}_{\beta,1}$ over the curve $\beta$, where $\mathcal{K}_{\beta,1}=\mathcal{K}_{\gtwo}$, and $\mathcal{K}_{\beta,2}=\mathcal{K}_{-\gtwo}$.
	The first wall crossing will lead us to 
	\begin{align*}
		\mathcal{K}_{\beta,1} (z^a) = z^a (1+z^{\gtwo})^{a_1}. 
	\end{align*}
	Then passing over the wall again by using $\beta$ will get us
	\begin{align*}
		\mathcal{K}_{\beta}(z^a)
		&= \mathcal{K}_{\beta, 2} \circ \mathcal{K}_{\beta, 1 } (z^a)= z^a (1+z^{-\gtwo})^{-a_1}(1+z^{\gtwo})^{a_1} \\
		&= z^{a_1\gone + (a_1+a_2) \gtwo}. 
	\end{align*}
	
	To have $z^{a_1\gone + (a_1+a_2) \gtwo}$ goes back to $z^a$, we have the monodromy $M_2$
	\begin{align}
		\gone &\mapsto \gone - \gtwo, \\
		\gtwo & \mapsto \gtwo.
	\end{align}
	
	Let us first consider the monodromy over the focus-focus singularities on $l_{\gone}$: 
	\begin{figure}[H]
		\centering
		\begin{tikzpicture}
			\draw
			(-3,1) node[left] {$1+z^{\gone}$} -- (1,-1/3) node[right] {$1+z^{-\gone}$};
			\filldraw(-2, 2/3) circle(1pt) node[above]{$0$};
			\fill(0, 0) node[sing] {};
			\draw[thick, ->] ([shift={(100:.5)}] 0, 0)  arc (100:390:.5);
			\node at (.8, .3) {$\alpha$};
		\end{tikzpicture}
	\end{figure}
	
	Consider the transformation according to the loop $\alpha$. Let $\mathcal{K}_{\alpha, 1}=\mathcal{K}_{\gone}$, and $\mathcal{K}_{\alpha, 2}=\mathcal{K}_{-\gone}$.
	We have
	\begin{align*}
		\mathcal{K}_{\alpha, 1} (z^a)= z^a (1+z^{\gone})^{-a_2}. 
	\end{align*}
	Then the whole loop $\alpha$ leads us to 
	\begin{align*}
		\mathcal{K}_{\alpha} &=  \mathcal{K}_{\alpha, 2} \circ  \mathcal{K}_{\alpha, 1} (z^a) 
		= z^a (1+z^{-\gone})^{a_2}  (1+z^{\gone})^{-a_2} \\
		&= z^{(a_1-a_2 )\gone + a_2 \gtwo}. 
	\end{align*}
	Then we obtain the monodromy $M_1$
	\begin{align}
		\gone &\mapsto \gone, \\
		\gtwo & \mapsto \gone+ \gtwo. 
	\end{align}
	
	Thus, we can compute the monodromy while singularity is at the origin by decomposing the singularity at the origin into two focus-focus singularities to check consistency similar to \cite{GHK}.
	Explicitly, there are two ways checking it. The first one is doing a similar calculation as in the beginning of the proof. Now we consider
	
	\begin{figure}[H]
		\centering
		\begin{tikzpicture}
			\draw (0,0)--++(45:2) node[above right] {$1+z^{\gtwo}$};
			\draw (0,0)--++(90:2) node[above] {$1+z^{\gone + \gtwo}$};
			\draw (0,0)--++(135:2)node[left] {$ 1+z^{\gone}$};
			\draw (0,0)--++(-45:2) node[right] {$ 1+z^{-\gone}$};
			\draw (0,0)--++(-135:2) node[left] {$ 1+z^{-\gtwo}$} ;
			\node[right] at ++(45:1.6) {$l_{\gamma_2}$};
			\node[right] at ++(90:1.6) {$l_{\gamma_3}$};
			\node[right] at ++(135:1.6) {$l_{\gamma_4}$};
			\node[right] at ++(-135:1.6) {$l_{\gamma_5}$};
			\node[right] at ++(-45:1.6) {$l_{\gamma_1}$};
			\filldraw(0, 0) circle(1pt) ;
			\draw[thick, ->] ([shift={(0:1)}] 0, 0)  arc (0:330:1) ;
			\draw [snake=snake,
			segment amplitude=.4mm,
			segment length=2mm] (0,0)--++(-90:2);
			\fill(0,0) node[sing] {};
		\end{tikzpicture}
	\end{figure}
	
	The first three wall crossings are the same and let us recap here:
	\begin{align*}
		\mathcal{K}_{\delta, l_{\gamma_4}}\mathcal{K}_{\delta, l_{\gamma_3}}\mathcal{K}_{\delta, l_{\gamma_2}}(z^a) &= 
		z^a (1+z^{\gone} + z^{\gone + \gtwo} )^{-a_2} (1+z^{\gtwo})^{a_1}.
	\end{align*}
	
	Now to pass over $l_{\gamma_5}$, we will have
	\begin{align*}
		\mathcal{K} ( z^a (1+z^{\gone} + z^{\gone + \gtwo} )^{-a_2} (1+z^{\gtwo})^{a_1})&= z^a (1+z^{-\gtwo})^{-a_1} \left( 1+z^{\gone} (1+z^{-\gtwo})^{-1} (1+z^{\gtwo}) \right)^{-a_2} (1+z^{\gtwo})^{a_1} \\
		&= z^{a_1 \gone + (a_1+a_2) \gtwo} (1+z^{\gone + \gtwo})^{-a_2}.
	\end{align*}
	
	The monodromy $M$ would then be
	\begin{align*}
		\gone &\mapsto -\gtwo; \\
		\gtwo &\mapsto \gone + \gtwo.
	\end{align*}
	and gives us
	\begin{align*}
		\mathcal{K}_{M} ( z^{a_1 \gone + (a_1+a_2) \gtwo} (1+z^{\gone + \gtwo})^{-a_2}) = z^{(a_1+a_2)\gone + a_2 \gtwo} (1+z^{\gone})^{-a_2}.
	\end{align*}
	
	The last wall crossing would then be 
	\begin{align*}
		\mathcal{K}_{\delta, l_{\gamma_1}} \left((z^{(a_1+a_2)\gone + a_2 \gtwo} (1+z^{\gone})^{a_1}\right)
		&= z^{(a_1+a_2)\gone + a_2 \gtwo} (1+z^{-\gone})^{a_2}(1+z^{\gone})^{-a_2}\\
		&= z^a.
	\end{align*}
	
	The second way is to use the following meta-lemma by direct computation
	\begin{claim} \label{focus-focus}
		$
		\mathcal{K}_{-\gamma}\mathcal{K}_{\gamma}(z^{\gamma'})= z^{M^{-1}\gamma'}$,
		where $M$ is transformation $\gamma'\mapsto \gamma' +\langle \gamma,\gamma'\rangle\gamma$.
	\end{claim}
	
	Note that if $\gamma$ is primitive, then $M$ is the Picard-Lefschetz transformation of a focus-focus singularity with Lefschetz thimble $\gamma$.
	Recall that if $\langle \gamma',\gamma\rangle=1 $, then the pentagon equation reads 
	\begin{align}\label{pentagon}
		\mathcal{K}_{\gamma}\mathcal{K}_{\gamma'}=\mathcal{K}_{\gamma'}\mathcal{K}_{\gamma+\gamma'}\mathcal{K}_{\gamma}.
	\end{align}
	Let $M_1,M_2$ denote the transformation in the Claim \ref{focus-focus} with respect to $\gamma'_1,\gamma'_2$ respectively.

	Then   
	\begin{align*}
		\mathcal{K}_{\gamma_5} \mathcal{K}_{\gamma_4}\mathcal{K}_{\gamma_3}\mathcal{K}_{\gamma_2}\mathcal{K}_{\gamma_1} =
		\bigg(\mathcal{K}_{-\gamma_2}\mathcal{K}_{\gamma_2}\bigg)\bigg(\mathcal{K}_{\gamma_2}^{-1}\mathcal{K}_{\gamma_4}\mathcal{K}_{\gamma_3}\mathcal{K}_{\gamma_2}\mathcal{K}_{-\gamma_1}^{-1}\bigg)\bigg(\mathcal{K}_{-\gamma_1}\mathcal{K}_{\gamma_1}\bigg).  
	\end{align*} Notice that the middle of the right hand side is identity by the pentagon identity \eqref{pentagon}. From Lemma \ref{focus-focus}, we have \begin{align*}
		\mathcal{K}_{\gamma_5} \mathcal{K}_{\gamma_4}\mathcal{K}_{\gamma_3}\mathcal{K}_{\gamma_2}\mathcal{K}_{\gamma_1}(z^{\gamma})=z^{M_2^{-1}M_1^{-1}\gamma}=z^{(M_1M_2)^{-1}\gamma}
	\end{align*} and the lemma follows from the fact that $M=M_1M_2$. Notice that the proof is motivated by deforming the type $II$ singular fibre into two $I_1$ singular fibres as in Figure \ref{decompositon}. However, the proof does NOT depend on the actual geometric deformation. 
	\begin{figure}[H]
		\centering
		\begin{tikzpicture}
			\draw (0,0)--++(45:2) node[above right] {$1+z^{\gtwo}$};
			\draw (0,0)--++(90:2) node[above] {$1+z^{\gone + \gtwo}$};
			\draw (0,0)--++(135:2)node[left] {$ 1+z^{\gone}$};
			\draw (0,0)--++(-45:2) node[right] {$ 1+z^{-\gone}$};
			\draw (0,0)--++(-135:2) node[left] {$ 1+z^{-\gtwo}$} ;
			\filldraw(0,0) circle(1pt) ;
			\draw [snake=snake,
			segment amplitude=.4mm,
			segment length=2mm] (-135:1.3)--++(-90:1);
			\fill(0,0) node[sing] {};
			\draw [snake=snake,
			segment amplitude=.4mm,
			segment length=2mm](-45:1.3)--++(-90:1) ;
			\fill(0,0) node[sing] {};
			\fill(-135:1.3) node[sing] {};
			\fill(-45:1.3) node[sing] {};
			\draw[thick, ->] ([shift={(0:.5)}] 0, 0)  arc (0:330:.5) ;
			\draw[thick, ->] ([shift={(0:.3)}] -45:1.3)  arc (0:330:.3) ;
			\draw[thick, ->] ([shift={(0:.3)}] -135:1.3)  arc (0:330:.3) ;
		\end{tikzpicture}
		\caption{Geometric interpretation of Lemma \ref{cancellation}.}
		\label{decompositon}
	\end{figure}

\section{A Lemma about the Monodromy}
Here we prove the following statement which is used in Section \ref{section:dp4} for proving the geometry $X_{IV}$ and certain Looijenga interior are diffeomorphic. 
\begin{lem}\label{lem: monodromy Appendix}
	If a type $IV$-fibre deforms to a type $II$-fibre and a $I_2$-fibre, then the vanishing cycle of the $I_2$-fibre is one of the classes in Theorem \ref{local discs} for the type $II$-fibre. 
\end{lem}
\begin{proof}
	 Recall that the unique matrix with $\begin{pmatrix} p \\ q \end{pmatrix}$ as eigenvector and conjugate to $\begin{pmatrix} 1 & k \\ 0  & 1\end{pmatrix}$ is given by $\begin{pmatrix} 1-kpq & kp^2\\ -kq^2 & 1+kpq   \end{pmatrix}$. The monodromy of the deformation of the type $IV$-fibre into a type $II$-fibre and a $I_2$-fibre implies that 
	  \begin{align*}
	  	  \begin{pmatrix}0 & 1\\ -1 & 1 \end{pmatrix}\begin{pmatrix} 1-2pq & 2p^2\\ -2q^2 & 1+2pq   \end{pmatrix}\sim    \begin{pmatrix}0 & 1\\ -1 & -1 \end{pmatrix},
	  \end{align*} for some $p,q\in \mathbb{Z}$. 
  By looking at the trace of both sides implies that $p^2-pq+q^2=1$. It's then easy to check that the only integer solutions are one-to-one corresponding to those classes in Theorem \ref{local discs}. 
\end{proof}

%
%

%
\bibliographystyle{alpha}
\bibliography{biblio} 

\noindent{\sc{Man-Wai Cheung\\
Kavli Institute for the Physics and Mathematics of the Universe (IPMU), 5-1-5 Kashiwanoha, Kashiwa, Chiba, 277-8583, Japan. }}\\
{\it{e-mail:}} \href{mailto:manwai.cheung@ipmu.jp}{manwai.cheung@ipmu.jp} \medskip

\noindent{\sc{Yu-Shen Lin\\
Department of Mathematics and Statistics, 665 Commonwealth Ave, Boston, Boston University, MA 02215, USA }}\\
{\it{e-mail:}} \href{mailto:yslin@bu.edu}{yslin@bu.edu}\medskip

\end{document}